\newtheorem{thm}{Theorem}
\newtheorem{cor}[thm]{Corollary}
\newtheorem{lem}[thm]{Lemma}
\newtheorem{prop}[thm]{Proposition}
\newtheorem{fact}[thm]{Fact}
\newtheorem{claim}[thm]{Claim}
\newtheorem{defn}[thm]{Definition}
\theoremstyle{definition}
\newtheorem{rem}{Remark}
\newcommand{\rr}{\mathbb{R}}
\newcommand{\nn}{\mathbb{N}}
\newcommand{\qq}{\mathbb{Q}}
\newcommand{\ee}{\varepsilon}
\newcommand{\con}{\smallfrown}
\newcommand{\SB}{\mathbf{\Sigma}}
\newcommand{\PB}{\mathbf{\Pi}}
\newcommand{\sbs}{\mathrm{SB}}
\newcommand{\subs}{\mathrm{Subs}}
\newcommand{\refl}{\mathrm{REFL}}
\newcommand{\sd}{\mathrm{SD}}
\newcommand{\nun}{\mathrm{NU}}
\newcommand{\ncon}{\mathrm{NC}}
\newcommand{\stb}{\mathfrak{X}}
\newcommand{\ntre}{\text{\texttt{T}}}
\newcommand{\dntre}{\emph{\text{\texttt{T}}}}
\newcommand{\stblng}{\mathfrak{X}=(X,\Lambda,T,(x_t)_{t\in\ntre})}
\newcommand{\dstblng}{\mathfrak{X}=(X,\Lambda,T,(x_t)_{t\in\dntre})}
\newcommand{\txtwo}{T^{\mathfrak{X}}_2}
\newcommand{\llll}{\mathcal{L}}
\newcommand{\ccc}{\mathcal{C}}
\newcommand{\xxx}{\mathcal{X}}
\newcommand{\ooo}{\mathcal{O}}
\newcommand{\sspan}{\mathrm{span}}
\newcommand{\ospan}{\overline{\mathrm{span}}}
\newcommand{\sg}{\sigma}
\newcommand{\seg}{\mathfrak{s}}
\newcommand{\ltr}{\Lambda^{<\nn}}
\newcommand{\tr}{\mathrm{Tr}}
\newcommand{\wf}{\mathrm{WF}}
\newcommand{\ct}{2^{<\nn}}
\newcommand{\fcode}{\{(C_k,\phi_k):k\geq 1\}}
\newcommand{\wb}{\Omega}
\newcommand{\ws}{\omega}
\newcommand{\wlong}{\big(X,Y,(x_n),(y_n)\big)}
\begin{document}

\title{On classes of Banach spaces admitting ``small" universal spaces}
\author{Pandelis Dodos}
\address{National Technical University of Athens, Faculty of Applied Sciences,
Department of Mathematics, Zografou Campus, 157 80, Athens, Greece.}
\email{pdodos@math.ntua.gr}

\footnotetext[1]{2000 \textit{Mathematics Subject Classification}: 03E15, 46B03, 46B07, 46B15. }
\footnotetext[2]{\textit{Key words}: non-universal spaces, strongly bounded classes,
Schauder bases, $\llll_\infty$-spaces.}

\maketitle


\begin{abstract}
We characterize those classes $\ccc$ of separable Banach spaces
admitting a separable universal space $Y$ (that is, a space $Y$
containing, up to isomorphism, all members of $\ccc$) which is not
universal for all separable Banach spaces. The characterization is
a byproduct of the fact, proved in the paper, that the class
$\mathrm{NU}$ of non-universal separable Banach spaces is
strongly bounded. This settles in the affirmative the main
conjecture form \cite{AD}. Our approach is based, among others,
on a construction of $\llll_\infty$-spaces, due to J. Bourgain
and G. Pisier. As a consequence we show that there exists
a family $\{Y_\xi:\xi<\omega_1\}$ of separable,
non-universal, $\llll_\infty$-spaces which uniformly exhausts
all separable Banach spaces. A number of other
natural classes of separable Banach spaces are shown to be
strongly bounded as well.
\end{abstract}


\section{Introduction}

\noindent \textbf{(A)} Universality problems are being posed
in Banach Space Theory from its early beginnings. A typical
one can be stated as follows.
\begin{enumerate}
\item[\textbf{(P1)}] Let $\ccc$ be a class of separable
Banach spaces. When can we find a space $Y$ that belongs
in the class $\ccc$ and contains an isomorphic copy of every
member of $\ccc$?
\end{enumerate}
A space $Y$ containing an isomorphic copy of every member of
$\ccc$ is called a \textit{universal space} of the class.
As it turns out, the requirement that the universal space
of $\ccc$ lies also in $\ccc$, is quite restrictive.
Consider, for instance, the class $\mathrm{UC}$ of all separable
uniformly convex Banach spaces. It is easy to see that if $Y$ is any
separable space universal for the class $\mathrm{UC}$, then $Y$
cannot be uniformly convex. However, as it was shown by E. Odell
and Th. Schlumprecht \cite{OS}, there exists a separable reflexive
space $R$ containing an isomorphic copy of every separable
uniformly convex space (see also \cite{DF}). Keeping in mind
this example, we see that in order to have non-trivial answers
to problem (P1), we should relax the demand that the universal
space $Y$ of the class $\ccc$ is in addition a member of $\ccc$.
Instead, we should look for ``small" universal spaces. The most
natural requirement, at this level of generality, is to ensure
that the universal space of $\ccc$ is not universal for
\textit{all} separable Banach spaces. So, one is led to
face the following problem.
\begin{enumerate}
\item[\textbf{(P2)}] Let $\ccc$ be a class of separable
Banach spaces. When can we find a separable space $Y$
which is universal for the class $\ccc$ but not universal
for all separable Banach spaces?
\end{enumerate}
\medskip

\noindent \textbf{(B)} A first necessary condition a class $\ccc$
must satisfy in order to admit a ``small" universal space (``small"
in the sense of (P2) above), can be traced in the work of J. Bourgain
\cite{Bou1} in the early 80s. To describe it, let us consider the
space $C(2^\nn)$, where $2^\nn$ stands for the Cantor set, and
let us fix a normalized Schauder basis $(e_n)$ of $C(2^\nn)$.
Bourgain associated to every separable Banach space
$X$ an ordinal, which we shall denote by $\phi_{\nun}(X)$,
measuring how well the initial segments of the basis $(e_n)$
are placed inside the space $X$ (the precise definition of
$\phi_{\nun}(X)$ is given in \S 2). This ordinal index
$\phi_{\nun}$ satisfies two basic properties. The first
property is that it characterizes separable
non-universal\footnote[1]{A Banach space is
non-universal if it is not universal for all separable
Banach spaces.} spaces, in the sense that a separable Banach space
$X$ is non-universal if and only if $\phi_{\nun}(X)<\omega_1$. The second
one is that it is monotone with respect to subspaces. That is, if $X$ is
isomorphic to a subspace of $Y$, then $\phi_{\nun}(X)\leq \phi_{\nun}(Y)$.
Combining these two properties, we arrive to the following necessary
condition for an affirmative answer to problem (P2).
\begin{enumerate}
\item[\textbf{(C1)}] Let $\ccc$ be a class of separable Banach spaces
admitting a universal space $Y$ which is not universal for all separable
Banach spaces. Then the index $\phi_{\nun}$ is uniformly bounded
on $\ccc$, as $\sup\{\phi_{\nun}(X):X\in\ccc\}\leq \phi_{\nun}(Y)<\omega_1$.
\end{enumerate}
\medskip

\noindent \textbf{(C)} A second necessary condition for an affirmative
answer to problem (P2) can be found in the work of B. Bossard \cite{Bos1}
in the 90s. Before we state it, let us briefly recall the general
framework of Bossard's work. Let $F\big(C(2^\nn)\big)$ be the set of
all closed subsets of $C(2^\nn)$ and consider the set
\[ \sbs=\big\{ X\in F\big(C(2^\nn)\big): X \text{ is a linear subspace}\big\}.\]
Endowing the set $\sbs$ with the relative Effors-Borel
structure, the space $\sbs$ becomes a standard Borel space.
As $C(2^\nn)$ is isometrically universal for all separable
Banach spaces, we may identify any class of separable
Banach spaces with a subset of $\sbs$. Under this point
of view, we denote by $\nun$ the subset of $\sbs$
consisting of all $X\in\sbs$ which are non-universal.

It is relatively easy to see that for every $Y\in\sbs$ the set
$S(Y)=\{X\in\sbs: X \text{ is isomorphic to a subspace of } Y\}$
is analytic. This observation leads to a second necessary condition
for an affirmative answer to problem (P2).
\begin{enumerate}
\item[\textbf{(C2)}] Let $\ccc$ be a class of separable Banach spaces
admitting a universal space $Y$ which is not universal for all separable
Banach spaces. Then there exists an analytic subset $A$ of $\nun$
with $\ccc\subseteq A$; simply take $A=S(Y)$.
\end{enumerate}
\medskip

\noindent \textbf{(D)} One of the main goals of the present
paper is to show that conditions (C1) and (C2) stated
above are not only necessary for an affirmative answer
to problem (P2) but they are also sufficient. Precisely,
the following is proved.
\begin{thm}
\label{1t1} Let $\ccc$ be a subset of $\sbs$. Then the following are
equivalent.
\begin{enumerate}
\item[(i)] The class $\ccc$ admits a separable universal space $Y$
which is not universal for all separable Banach spaces.
\item[(ii)] We have $\sup\big\{ \phi_{\nun}(X):X\in \ccc\}<\omega_1$.
\item[(iii)] There exists an analytic subset $A$ of $\nun$ with
$\ccc\subseteq A$.
\end{enumerate}
\end{thm}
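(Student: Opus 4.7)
My plan is to establish the three-way equivalence by proving the cycle (i) $\Rightarrow$ (iii) $\Rightarrow$ (ii) $\Rightarrow$ (i), so that the only non-routine implication is (iii) $\Rightarrow$ (i). Two of the implications are already essentially observed in parts (B) and (C) of the introduction: for (i) $\Rightarrow$ (ii) one uses the monotonicity of $\phi_{\nun}$ under subspaces together with the characterization $\phi_{\nun}(Y)<\omega_1\iff Y\in\nun$, while for (i) $\Rightarrow$ (iii) one takes the analytic set $A=S(Y)=\{X\in\sbs:X\text{ embeds into }Y\}$, which is contained in $\nun$ since $Y$ itself is non-universal by hypothesis.

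The equivalence (ii) $\Leftrightarrow$ (iii) should follow from standard descriptive set theory, once one verifies that $\phi_{\nun}$ is a $\PB^1_1$-rank on the coanalytic set $\nun$; this verification, which I plan to carry out in \S 2, goes by exhibiting $\phi_{\nun}(X)$ as the order of a tree canonically associated to $X$ and checking the two definability clauses in the definition of a coanalytic rank. Granting this, (iii) $\Rightarrow$ (ii) is an immediate application of the boundedness principle for coanalytic ranks: any analytic subset of $\nun$ has rank strictly less than $\omega_1$. Conversely, (ii) $\Rightarrow$ (iii) reduces to the general fact that the sublevel set $\{X\in\sbs:\phi_{\nun}(X)\leq\xi\}$ is Borel for every $\xi<\omega_1$, and that Borel sets are analytic.

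The heart of the theorem is the implication (iii) $\Rightarrow$ (i), and the main obstacle is precisely the claim advertised in the abstract: the class $\nun$ is \emph{strongly bounded}, in the sense that for every analytic set $A\subseteq\nun$ there exists $Y\in\nun$ containing an isomorphic copy of every member of $A$. Applied to an analytic set $A\supseteq\ccc$ provided by (iii), this produces a separable non-universal $Y$ which is universal for $\ccc$, yielding (i). Establishing strong boundedness of $\nun$ is where the bulk of the paper's technology is needed: one must amalgamate an analytic family of non-universal spaces into a single non-universal space. Since non-universality is a coanalytic property, such amalgamation cannot be carried out by naive $\ell_2$- or $\ell_\infty$-sums. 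My plan is to use the Bourgain--Pisier $\llll_\infty$-space construction highlighted in the abstract: select in a Borel fashion Schauder-tree-bases coding the members of $A$, assemble them into a single ``strongly bounded'' Schauder tree basis $\stb$, and then apply the Bourgain--Pisier procedure to the associated space $\txtwo$ to produce a separable $\llll_\infty$-envelope $Y$ which contains every $X\in A$ while its Bourgain index $\phi_{\nun}(Y)$ remains bounded by a countable ordinal, ensuring $Y\in\nun$.
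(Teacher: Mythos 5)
The skeleton of your argument is right: the routine implications (i) $\Rightarrow$ (ii) and (i) $\Rightarrow$ (iii) from Bourgain's and Bossard's observations, the $\PB^1_1$-rank machinery giving (ii) $\Leftrightarrow$ (iii) via boundedness and Borel sublevel sets, and the reduction of the remaining direction to the strong boundedness of $\nun$ all match the paper. So the only substantive content to check is your sketch of strong boundedness.

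Here there is a genuine gap: the two main tools are used in the wrong order. You propose to first ``select in a Borel fashion Schauder-tree-bases coding the members of $A$, assemble them into a single Schauder tree basis $\stb$,'' form $\txtwo$, and only afterward apply the Bourgain--Pisier procedure ``to the associated space $\txtwo$''. But the members of an arbitrary analytic $A\subseteq\nun$ are arbitrary separable Banach spaces and need not have Schauder bases, so your first step cannot be carried out: the Schauder-tree-basis amalgamation (unfolding against Pe{\l}czy\'{n}ski's universal basis space) only encodes spaces that already have a basis. Moreover, once you have a non-universal $\txtwo$ containing everything, applying Bourgain--Pisier to $\txtwo$ accomplishes nothing further. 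The paper's order is opposite and is forced: Bourgain--Pisier is used \emph{first}, as a preprocessing step, replacing $A$ by the analytic set $A'$ of spaces $\llll_2[X]$ for $X\in A$. Each $\llll_2[X]$ is a separable $\llll_\infty$-space, hence has a Schauder basis by Johnson--Rosenthal--Zippin; it contains $X$ isometrically; and it remains non-universal because $\llll_2[X]/X$ has the Schur property and non-containment of $C(2^\nn)$ passes through such quotients (Proposition 3.1(i)). Analyticity of $A'$ is exactly what the parameterization Theorem 4.2 secures. Only then can the Schauder-tree-basis/$\ell_2$-Baire-sum machinery (Theorem 5.1) be applied to $A'$ to produce a single non-universal space with a Schauder basis containing everything. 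Your inversion founders at the very first step on the basis obstruction, and also removes the reason for using Bourgain--Pisier at all.

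A minor side remark: you describe your plan as the cycle (i) $\Rightarrow$ (iii) $\Rightarrow$ (ii) $\Rightarrow$ (i) but then declare the hard implication to be (iii) $\Rightarrow$ (i), which is not in that cycle; what you actually prove is (i) $\Rightarrow$ (iii), (ii) $\Leftrightarrow$ (iii), and (iii) $\Rightarrow$ (i), which is perfectly adequate and close to the paper's route (where (ii) $\Rightarrow$ (i) also passes through a Borel sublevel set before invoking strong boundedness).
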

Theorem \ref{1t1} is actually a consequence of a structural property
of the class $\nun$ of all non-universal separable Banach spaces.
To state it we recall the following notion, introduced in \cite{AD}.
\begin{defn}
\label{1d2} A class $\ccc$ is said to be \emph{strongly bounded} if for every
analytic subset $A$ of $\ccc$ there exists $Y\in\ccc$ that contains an
isomorphic copy of every $X\in A$.
\end{defn}
The main result of the paper is the following.
\begin{thm}
\label{1t3} The class $\nun$ is strongly bounded.
\end{thm}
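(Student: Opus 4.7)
The plan is to combine three ingredients: a boundedness principle for the Bourgain index $\phi_{\nun}$, the Schauder-tree amalgamation technique of \cite{AD}, and the Bourgain-Pisier $\llll_\infty$-envelope flagged in the abstract. My starting point is an analytic set $A \subseteq \nun$. Since $\phi_{\nun}(X)<\omega_1$ exactly characterizes non-universality and $\phi_{\nun}$ should be a coanalytic rank on $\sbs$, a standard boundedness theorem applied to the analytic set $A$ yields a countable ordinal $\xi_0 = \sup\{\phi_{\nun}(X) : X \in A\} < \omega_1$. Thus it suffices to produce, for each such $\xi_0$, a non-universal separable Banach space $Y$ that contains every $X \in A$ (and more generally every $X\in\sbs$ with $\phi_{\nun}(X)\le \xi_0$).

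Next I would code $A$ by a tree. Using a standard Borel parametrization of analytic subsets of $\sbs$, one builds a Schauder tree basis $\stblng$ whose branch subspaces exhaust $A$ up to isomorphism. The $\ell_2$-Baire sum $\txtwo$ associated to this Schauder tree basis is then a separable Banach space containing isomorphic copies of every $X \in A$. This is the route used in \cite{AD} to establish strong boundedness of several other classes; the new difficulty here is that $\txtwo$ is a priori universal for all separable Banach spaces, and so does not itself lie in $\nun$.

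To remedy this, I would apply the Bourgain-Pisier construction to $\txtwo$, producing a separable $\llll_\infty$-space $Y$ that contains $\txtwo$, hence all of $A$, as a subspace. The final and critical task is to show $Y \in \nun$, i.e.\ $\phi_{\nun}(Y) < \omega_1$. The plan is to develop a tree-theoretic analog of the Bourgain index at the level of the Schauder tree basis, prove this analog is bounded in terms of $\xi_0$, and then show that the Bourgain-Pisier envelope preserves this bound, intuitively because the envelope is built by amalgamating finite-dimensional pieces and so cannot manufacture genuinely new high-complexity copies of initial segments of the basis of $C(2^\nn)$.

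I expect the main obstacle to be precisely this last step: quantitatively bounding $\phi_{\nun}(Y)$ requires tracking how long initial segments of a fixed basis of $C(2^\nn)$ can sit inside the $\llll_\infty$-envelope, and linking that to a controllable tree index on the underlying Schauder tree basis. Once such a link is established, strong boundedness of $\nun$ (Theorem \ref{1t3}) follows, and in turn yields the equivalences of Theorem \ref{1t1}.
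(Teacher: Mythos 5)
Your proposal has the right ingredients (Bourgain index boundedness, Schauder tree bases and the $\ell_2$-Baire sum from \cite{AD}, and the Bourgain-Pisier construction), but they are assembled in the wrong order, and this is not a cosmetic difference — your order does not work.

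The first gap is in the tree-coding step. To build a Schauder tree basis whose branch subspaces exhaust an analytic class up to isomorphism, as in Lemma \ref{5l3}, one needs every member of that class to \emph{have a Schauder basis}: the coding goes through Pe{\l}czy\'{n}ski's universal basis $(u_n)$, and a space with no basis simply cannot appear as a branch space $X_\sigma$. An arbitrary analytic subset $A$ of $\nun$ contains spaces with no basis, so you cannot start by tree-coding $A$ directly. This is precisely why the paper applies the Bourgain-Pisier envelope \emph{first}, to each $X\in A$ individually: it produces a separable $\llll_\infty$-space $\llll_\lambda[X]$, which has a Schauder basis by Theorem \ref{2t2}, contains $X$ isometrically, and stays non-universal because $\llll_\lambda[X]/X$ has the Schur property and Proposition \ref{3p1}(i) applies. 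The parameterized version (Theorem \ref{4t2}) makes the resulting family $A'$ analytic, and only then is the Schauder tree amalgamation (Theorem \ref{5t1}) applied to $A'$.

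The second gap is your treatment of $\txtwo$. You claim $\txtwo$ is ``a priori universal'' and therefore needs to be tamed by a Bourgain-Pisier envelope. This is a misconception: the whole point of the proof of Theorem \ref{5t1} is that the $\ell_2$-Baire sum built from a Schauder tree basis whose branch spaces are non-universal \emph{is} non-universal, by the Rosenthal $\ell_1$-fixing argument (Theorem \ref{2t1}) applied to the projections $P_\sigma$. And if $\txtwo$ were actually universal, then $\llll_\lambda[\txtwo]$ would contain $\txtwo$, hence $C(2^\nn)$, and would again be universal — the envelope can never remove universality, it can only (and only in the qualitative sense of Proposition \ref{3p1}) preserve non-universality. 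So your final step is either unnecessary or impossible, and the speculative ``tree-theoretic analog of the Bourgain index that the envelope preserves'' is not needed: the paper never bounds $\phi_{\nun}(\llll_\lambda[X])$ by hand; Corollary \ref{4c14} gets such a bound for free out of analyticity of $\llll_\lambda$ and boundedness of the $\PB^1_1$-rank.
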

The problem whether the class $\nun$ is strongly bounded had been asked by Alekos Kechris in the 80s.
\medskip

\noindent \textbf{(E)} Another consequence of Theorem \ref{1t3} is related to the
following notions.
\begin{defn}[\cite{AD}]
\label{dinew} Let $\ccc$ be an isomorphic invariant class of separable Banach spaces
such that every $X\in\ccc$ is not universal.
\begin{enumerate}
\item[(1)] We say that the class $\ccc$ is \emph{Bourgain generic} if every separable
Banach space $Y$ which is universal for the class $\ccc$, must be universal for all separable
Banach spaces.
\item[(2)] We say that the class $\ccc$ is \emph{Bossard generic} if every analytic
set $A$ that contains all members of $\ccc$ up to isomorphism, must contain a $Y\in A$
which is universal for all separable Banach spaces.
\end{enumerate}
\end{defn}
It is easy to see that if a class $\ccc$ of separable Banach spaces is Bossard generic,
then it is also Bourgain generic. In \cite{AD} it was conjectured that the above notions
coincide. Theorem \ref{1t3} settles this in the affirmative.
\begin{cor}
\label{ic4} Bourgain genericity coincides with Bossard genericity.
\end{cor}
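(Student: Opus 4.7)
The plan is to prove the two implications of Corollary \ref{ic4} separately, with the nontrivial direction being a short deduction from Theorem \ref{1t3}. I would start by recalling (as noted by the author) that Bossard genericity trivially implies Bourgain genericity: given a separable $Y$ that is universal for $\ccc$, the set $S(Y)=\{X\in\sbs:X\text{ embeds into }Y\}$ is analytic and contains $\ccc$ up to isomorphism, so by Bossard genericity some $Z\in S(Y)$ is universal for all separable Banach spaces; since $Z$ embeds into $Y$, so is $Y$. This direction uses only the standard fact that $S(Y)$ is analytic, already recorded in part (C) of the introduction.

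For the reverse implication, assume $\ccc$ is Bourgain generic and let $A\subseteq\sbs$ be an analytic set containing every $X\in\ccc$ up to isomorphism. The goal is to find $Y\in A$ that is universal for all separable Banach spaces. I would argue by contradiction: suppose every $Y\in A$ is non-universal, that is, $A\subseteq\nun$. Since $A$ is analytic and $\nun$ is strongly bounded by Theorem \ref{1t3}, there exists $Y_0\in\nun$ which contains an isomorphic copy of every $X\in A$. Because $\ccc$ is contained in $A$ up to isomorphism, $Y_0$ is then universal for the class $\ccc$. But $Y_0\in\nun$, so $Y_0$ is not universal for all separable Banach spaces; this contradicts the Bourgain genericity of $\ccc$. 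Hence some $Y\in A$ must be universal for all separable Banach spaces, as required.

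The entire argument is a straightforward two-line reduction once Theorem \ref{1t3} is in hand, so there is no real obstacle here; the conceptual content lies entirely in the strong boundedness of $\nun$, which is the main technical result of the paper. The only minor point to be careful about is handling the phrase ``up to isomorphism'' correctly in the Bossard-generic hypothesis: one should pick, for each $X\in\ccc$, a representative $X'\in A$ with $X'\cong X$, and note that $X'\hookrightarrow Y_0$ yields $X\hookrightarrow Y_0$, which is what is needed to conclude that $Y_0$ is universal for $\ccc$.
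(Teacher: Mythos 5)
Your argument is correct and is essentially the same as the paper's: the nontrivial direction follows directly from the strong boundedness of $\nun$ (Theorem \ref{1t3}) applied to an analytic set $A\subseteq\nun$ containing $\ccc$ up to isomorphism, yielding a non-universal $Y_0$ universal for $\ccc$ and thereby violating Bourgain genericity. The paper phrases this as a proof of the contrapositive rather than as a contradiction inside a direct proof, and it leaves the easy direction (Bossard implies Bourgain) to the reader, but there is no substantive difference.
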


\noindent \textbf{(F)} The proof of Theorem \ref{1t3} relies,
in part, on some of the results proved in \cite{AD}, in particular
on the fact that the class of all non-universal spaces with a
Schauder basis is strongly bounded (this material is recalled
in \S 5). The new component is the use of a construction
of $\llll_\infty$-spaces, due to Jean Bourgain and Gilles Pisier
\cite{BP}. The Bourgain-Pisier construction was the outcome
of the combination of two major achievements of Banach Space
Theory during the 80s. The first one is the Bourgain-Delbaen
space \cite{BD}, the first example of a $\llll_\infty$-space
not containing an isomorphic copy of $c_0$. The second one is
Pisier's scheme \cite{Pi} for producing counterexamples to a
conjecture of Grothendieck. A striking similarity between
the two methods, which is reflected in the Bourgain-Pisier
construction, is that they both produce infinite-dimensional
spaces essentially by developing techniques for extending
finite-dimensional ones.

The following consequence of Theorem \ref{1t3} provides
a more accurate insight of the reasoning behind the
proof of Theorem \ref{1t3}.
\begin{cor}
\label{1c5} For every $\lambda>1$ there exists a family
$\{Y^\lambda_\xi: \xi<\omega_1\}$ of separable Banach spaces
with the following properties.
\begin{enumerate}
\item[(i)] For every $\xi<\omega_1$ the space $Y^\lambda_\xi$ is
non-universal and $\llll_{\infty,\lambda+}$.
\item[(ii)] If $\xi<\zeta<\omega_1$, then $Y^\lambda_\xi$ is contained
in $Y^\lambda_\zeta$.
\item[(iii)] If $X$ is a separable space with $\phi_{\nun}(X)\leq \xi$,
then $X$ is contained in $Y^\lambda_\xi$.
\end{enumerate}
\end{cor}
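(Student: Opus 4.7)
The plan is to obtain the family $\{Y^\lambda_\xi\}_{\xi<\omega_1}$ by transfinite recursion, at each stage feeding an appropriately enlarged analytic family into (a quantitative form of) Theorem \ref{1t3}. The first ingredient is descriptive set theory: since $\nun$ is the complement of the analytic set $\{X\in\sbs : C(2^\nn)\hookrightarrow X\}$, it is coanalytic, and $\phi_{\nun}$ is a coanalytic rank on it. Consequently, for every $\xi<\omega_1$,
\[ A_\xi:=\{X\in\sbs:\phi_{\nun}(X)\leq\xi\} \]
is a Borel subset of $\sbs$ contained in $\nun$. The second, crucial, ingredient is the following quantitative version of Theorem \ref{1t3} that I shall assume is what the paper's proof actually establishes: for every analytic $A\subseteq\nun$ and every $\lambda>1$, there exists a non-universal separable $\llll_{\infty,\lambda+}$-space $Y$ containing an isomorphic copy of every $X\in A$. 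This is natural in view of part \textbf{(F)} of the introduction: one takes a non-universal space with a basis supplied by the main result of \cite{AD} and then passes to its Bourgain--Pisier envelope, which is $\llll_{\infty,\lambda+}$ by construction.

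Given this refinement, I define the $Y^\lambda_\xi$ recursively. Assume $Y^\lambda_\eta$ has been chosen for every $\eta<\xi$ so that each is non-universal and $\llll_{\infty,\lambda+}$, and set
\[ B_\xi:=A_\xi\cup\{Y^\lambda_\eta:\eta<\xi\}. \]
Because $\xi$ is countable and singletons are Borel in the standard Borel space $\sbs$, the set $B_\xi$ is Borel, hence analytic, and it lies inside $\nun$ by the inductive hypothesis together with $A_\xi\subseteq\nun$. Apply the quantitative form of Theorem \ref{1t3} to $B_\xi$ and $\lambda$ to pick $Y^\lambda_\xi$ to be a non-universal $\llll_{\infty,\lambda+}$-space containing an isomorphic copy of every member of $B_\xi$. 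Property (i) is then immediate; property (ii) follows because for $\xi<\zeta$ we have $Y^\lambda_\xi\in B_\zeta$, so $Y^\lambda_\xi$ embeds into $Y^\lambda_\zeta$; and property (iii) follows because any separable $X$ with $\phi_{\nun}(X)\leq\xi$ belongs to $A_\xi\subseteq B_\xi$.

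The main obstacle is thus not the recursion, which is essentially formal, but rather the quantitative refinement of Theorem \ref{1t3} that allows the universal witness to be chosen $\llll_{\infty,\lambda+}$. This is precisely where the Bourgain--Pisier construction is indispensable in the proof of Theorem \ref{1t3}: one must verify that the $\llll_\infty$-envelope of the non-universal basis space coming from \cite{AD}, built over an analytic family of non-universal spaces, stays non-universal. Granting that verification, which is the genuine content of the theorem, Corollary \ref{1c5} follows by the transfinite induction above.
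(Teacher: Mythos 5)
Your proposal is correct and follows essentially the same route as the paper: transfinite recursion with $B_\xi=A_\xi\cup\{Y^\lambda_\eta:\eta<\xi\}$, strong boundedness of $\nun$ to get a non-universal witness, and the Bourgain--Pisier envelope $\llll_\lambda[\cdot]$ (whose non-universality is preserved via Proposition \ref{3p1} and the Schur property of the quotient) to upgrade it to an $\llll_{\infty,\lambda+}$-space. The paper simply writes out the ``quantitative form'' you invoke as the explicit two-step move ``apply Theorem \ref{1t3} to $C$ to get $X$, then set $Y^\lambda_\xi=\llll_\lambda[X]$,'' rather than packaging it as a separate lemma.
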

Corollary \ref{1c5} shows that the class
of $\llll_\infty$-spaces is ``generic". This result was
somehow surprising to the author, as he thought
that a typical separable Banach space ``looks like" Tsirelson's
space \cite{Ts}. Recent work, however, of E. Odell,
Th. Schlumprecht and A. Zs\'{a}k \cite{OSZ} shows that
a typical separable \textit{reflexive} space is indeed
Tsirelson's space.
\medskip

\noindent \textbf{(G)} Although in \cite{AD} a number of
natural classes of separable Banach spaces were shown to be
strongly bounded, until recently, the only classes which were
known to be strongly bounded without having to impose on them
any restriction on the existence of a basis, were the class
$\refl$ of separable reflexive spaces and the class $\sd$
of spaces with separable dual (see \cite{DF}).

The final result we would like to mention in the introduction
is that there exist continuum many such strongly bounded
classes (beside, of course, the classes $\refl$, $\sd$
and $\nun$). Before we give the precise statement let us
recall that an infinite-dimensional Banach space $X$ is
said to be minimal if $X$ embeds into every infinite-dimensional
subspace of it (e.g. the classical sequence spaces $c_0$ and $\ell_p$
are minimal). We show the following.
\begin{thm}
\label{1t6} Let $X$ be a minimal Banach space not containing $\ell_1$.
Then the class
\[ \ncon_X=\{ Y\in\sbs: Y \text{ does not contain an isomorphic copy of  } X\}\]
is strongly bounded.
\end{thm}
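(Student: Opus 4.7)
The plan is to follow the same scheme as Theorem \ref{1t3}, with ``non-universal'' replaced by ``not containing $X$'' throughout. Fix $X$ minimal with $X\not\supseteq \ell_1$ and let $A\subseteq \ncon_X$ be analytic.

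First, I would adapt the tree and amalgamation techniques of \cite{AD} recalled in Section 5 to establish a ``basis version'' of the theorem: the subclass of $\sbs$ consisting of spaces with a Schauder basis that do not contain $X$ is strongly bounded. Applied to $A$, this yields a separable Banach space $Z$ with a Schauder basis such that $Z$ contains an isomorphic copy of every $Y\in A$ and $Z\in \ncon_X$. The point is that the amalgamation procedure of \cite{AD} depends only on how subspaces are coded as branches of a tree; avoidance of the single fixed minimal space $X$ is a co-analytic property that can be maintained through this coding.

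Next, apply the Bourgain--Pisier construction to $Z$: for any $\lambda>1$ one obtains a separable $\llll_{\infty,\lambda^+}$-space $Y$ with $Z\hookrightarrow Y$ such that the quotient $Y/Z$ has the Schur property. In particular $Y$ contains every member of $A$, and it remains only to verify that $Y\in \ncon_X$.

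Suppose towards a contradiction that $X\hookrightarrow Y$ and regard $X$ as a subspace of $Y$; write $q\colon Y\to Y/Z$ for the quotient map. If $q|_X$ is bounded below on some infinite-dimensional subspace $X'\subseteq X$, then $X'$ embeds into $Y/Z$ and so inherits the Schur property; by minimality of $X$ we have $X\hookrightarrow X'$, whence $X$ itself is Schur. Since $X\not\supseteq \ell_1$, Rosenthal's $\ell_1$ theorem supplies weakly Cauchy subsequences of bounded sequences in $X$, and under the Schur property these are norm convergent, forcing the closed unit ball of $X$ to be norm compact and contradicting $\dim X=\infty$. Hence $q|_X$ is not bounded below on any infinite-dimensional subspace of $X$, and a standard small-perturbation argument produces a normalized basic sequence $(x_n)\subseteq X$ equivalent to a basic sequence $(z_n)\subseteq Z$; writing $[x_n]$, $[z_n]$ for the corresponding closed linear spans, minimality of $X$ gives $X\hookrightarrow [x_n]\simeq [z_n]\subseteq Z$, contradicting $Z\in \ncon_X$.

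The main obstacle I anticipate is the first step: implementing the tree-based amalgamation of \cite{AD} so that non-$X$-containment, rather than merely non-universality, is preserved by the global space $Z$. Once $Z$ is constructed, the $\llll_\infty$-step and the dichotomy applied to $q|_X$ are natural analogues of the reasoning behind Theorem \ref{1t3}.
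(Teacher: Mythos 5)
Your overall plan has the right ingredients but applies them in the wrong order, which opens a genuine gap at the very first step. You propose to apply the ``basis version'' of the theorem directly to the given analytic set $A\subseteq\ncon_X$ to produce the space $Z$. But the amalgamation machinery of \cite{AD} recalled in Section 5 (and its $\ncon_X$-analogue, Theorem \ref{7t1}) can only be fed an analytic family each of whose members has a Schauder basis: the whole construction proceeds by encoding spaces as branches of a tree of basis vectors and taking an $\ell_2$ Baire sum. An arbitrary analytic $A\subseteq\ncon_X$ will in general contain spaces without any Schauder basis, so the first step as stated is not available.

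This is precisely why the paper invokes the parameterized Bourgain--Pisier construction (Theorem \ref{4t2}, Corollary \ref{4c15}) \emph{before} the amalgamation, not after. One first replaces $A$ by an analytic set $A'\subseteq\ncon_X$ consisting of $\llll_\infty$-spaces, hence of spaces with Schauder bases by Theorem \ref{2t2}, with every $Y\in A$ embedding into some member of $A'$; membership of $A'$ in $\ncon_X$ uses Proposition \ref{3p1}(ii), which is exactly the Schur-quotient dichotomy you sketch at the end. Only then does one apply the basis-amalgamation Theorem \ref{7t1} to $A'$, obtaining the desired $V\in\ncon_X$. Your final step of applying Bourgain--Pisier to the already-constructed $Z$ is superfluous: once a $Z\in\ncon_X$ universal for $A$ exists, strong boundedness is proved, so the $\llll_\infty$ upgrade of $Z$ buys nothing. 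In short, the two ideas you name are the right ones, and your analysis of the quotient map $q|_X$ is essentially Proposition \ref{3p1}(ii); what is missing is that the Bourgain--Pisier step must be done \emph{uniformly in the members of $A$} (the content of Theorem \ref{4t2}) and \emph{before} the tree amalgamation, not to the end-product.
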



\section{Background material}

Our general notation and terminology is standard as can be
found, for instance, in \cite{LT} and \cite{Kechris}.
By $\nn=\{0,1,2,...\}$ we shall denote the natural numbers.
By $[\nn]$ we denote the set of all infinite subsets of $\nn$
which is clearly a $G_\delta$, hence Polish, subspace of $2^\nn$.

\subsection{Trees and dyadic subtrees}

Let $\Lambda$ be a non-empty set. By $\Lambda^{<\nn}$ we
shall denote the set of all finite sequences in $\Lambda$.
The empty sequence is denoted by $\varnothing$ and is included
in $\ltr$. We view $\ltr$ as a tree equipped with the (strict)
partial order $\sqsubset$ of end-extension. For every $s,t\in\ltr$
by $s^{\con}t$ we shall denote their concatenation;
by $|t|$ we shall denote the length of $t$, i.e. the
cardinality of the set $\{s\in\ltr: s\sqsubset t\}$. By
$\Lambda^\nn$ we denote the set of all infinite sequences in $\Lambda$.
Equipping $\Lambda$ with the discrete topology and $\Lambda^\nn$
with the product topology, we see that $\Lambda^\nn$ is a
completely metrizable space which is additionally separable
if $\Lambda$ is countable. For every $n,k\in\nn$ with $n\leq k$
and every $t\in\ltr$ with $|t|=k$ we set $t|n=\big(t(0),...,t(n-1)\big)$
if $n\geq 1$, while $t|0=\varnothing$. Similarly, for every
$\sg\in\Lambda^\nn$ and every $n\in\nn$ we set
$\sg|n=\big(\sg(0),...,\sg(n-1)\big)$ if $n\geq 1$, while
$\sg|0=\varnothing$.

A \textit{tree} $T$ on $\Lambda$ is a downwards
closed subset of $\ltr$. By $\tr(\Lambda)$ we shall denote
the set of all trees on $\Lambda$. Hence
\[ T\in\tr(\Lambda) \Leftrightarrow \forall s,t\in\ltr \
(s\sqsubseteq t \text{ and } t\in T\Rightarrow s\in T). \]
Notice that if $\Lambda$ is countable, then $\tr(\Lambda)$
is a closed subspace of the compact metrizable space $2^{\ltr}$.
The \textit{body} $[T]$ of a tree $T$ on $\Lambda$ is defined to
be the set $\{\sg\in\Lambda^\nn: \sg|n\in T \ \forall n\in\nn\}$.
A tree $T$ is said to be \textit{pruned} if for every
$t\in T$ there exists $s\in T$ with $t\sqsubset s$. It is said
to be \textit{well-founded} if $[T]=\varnothing$. The set
of all well-founded trees on $\Lambda$ is denoted by $\wf(\Lambda)$.
For every $T\in\wf(\Lambda)$ we let $T'=\{s\in T: \exists t\in T
\text{ with } s\sqsubset t\}\in\wf(\Lambda)$. By transfinite
recursion, we define the iterated derivatives $T^{(\xi)}$ $(\xi<\kappa^+)$
of $T$, where $\kappa$ stands for the cardinality of $\Lambda$.
The \textit{order} $o(T)$ of $T$ is defined to be the least
ordinal $\xi$ such that $T^{(\xi)}=\varnothing$.

Let $\ct$ be the Cantor tree, i.e. the tree consisting of all
finite sequences of $0$'s and $1$'s. For every $s,t\in\ct$
we let $s\wedge t$ to be the $\sqsubset$-maximal node $w$
of $\ct$ with $w\sqsubseteq s$ and $w\sqsubseteq t$. If
$s,t\in\ct$ are incomparable with respect to $\sqsubseteq$,
then we write $s\prec t$ provided that $(s\wedge t)^{\con}0
\sqsubseteq s$ and $(s\wedge t)^{\con}1\sqsubseteq t$. We say
that a subset $D$ of $\ct$ is a \textit{dyadic subtree} of
$\ct$ if $D$ can be written in the form $D=(s_t)_{t\in\ct}$
so that for every $t_1, t_2\in\ct$ we have $t_1\sqsubset t_2$
(respectively $t_1\prec t_2$) if and only if $s_{t_1}
\sqsubset s_{t_2}$ (respectively $s_{t_1}\prec s_{t_2}$).
It is easy to see that such a representation of $D$ as
$(s_t)_{t\in\ct}$ is unique. In the sequel, when we write
$D=(s_t)_{t\in\ct}$, where $D$ is a dyadic subtree, we will
assume that this is the canonical representation of $D$
described above.

\subsection{Operators fixing copies of $C(2^\nn)$}

Let $X, Y$ and $Z$ be Banach spaces and $T:X\to Y$
be a bounded linear operator. We say that
the operator $T$ \textit{fixes a copy} of $Z$ if there
exists a subspace $E$ of $X$ which is isomorphic to $Z$
and such that $T|_E$ is an isomorphic embedding. In \cite{Ro1},
H. P. Rosenthal has shown that if $X$ is a Banach space and
$T:C([0,1])\to X$ is an operator such that $T^*$ has
non-separable range, then $T$ fixes a copy of $C([0,1])$.
This result combined with a classical discovery
of A. A. Milutin \cite{Mi} yields the following.
\begin{thm}[\cite{Ro1}]
\label{2t1} Let $K$ be an uncountable compact metrizable space,
$X$ a Banach space and $T:C(K)\to X$ a bounded linear operator.
If $T$ fixes a copy of $\ell_1$, then $T$ also fixes a copy of
$C(K)$.
\end{thm}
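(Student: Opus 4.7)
The plan is to combine Milutin's classical isomorphism $C(K)\cong C([0,1])$ with the Rosenthal result quoted in the excerpt, which says that an operator from $C([0,1])$ whose adjoint has non-separable range must fix a copy of $C([0,1])$. The key intermediate step is to show that an operator from $C([0,1])$ which fixes an $\ell_1$ automatically has non-separable adjoint range.

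First I would reduce to $K=[0,1]$. By Milutin's theorem, there is an isomorphism $\Phi:C(K)\to C([0,1])$. Set $S=T\circ\Phi^{-1}:C([0,1])\to X$. Since $\Phi$ is an isomorphism, $S$ fixes a copy of $\ell_1$ if and only if $T$ does, and if $S$ fixes a copy of $C([0,1])$ then $T$ fixes the isomorphic preimage under $\Phi$, which is a subspace of $C(K)$ isomorphic to $C([0,1])\cong C(K)$. Hence we may assume throughout that $K=[0,1]$ and work with $S$.

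Next I would verify that $S^*:X^*\to C([0,1])^*$ has non-separable range. Fix a subspace $E\subseteq C([0,1])$ isomorphic to $\ell_1$ with $S|_E$ an isomorphic embedding. For each $e^*\in E^*$, the functional $e^*\circ (S|_E)^{-1}$ is bounded on the subspace $S(E)\subseteq X$, hence by Hahn--Banach extends to some $x^*\in X^*$ satisfying $S^*(x^*)|_E=e^*$. Thus the restriction map $S^*(X^*)\to E^*$ is surjective, and since $E^*\cong\ell_\infty$ is non-separable, so is $S^*(X^*)$.

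Finally, applying Rosenthal's theorem as quoted yields a subspace $F\subseteq C([0,1])$ isomorphic to $C([0,1])$ with $S|_F$ an isomorphic embedding. Pulling $F$ back through $\Phi$ gives a subspace of $C(K)$ isomorphic to $C(K)$ on which $T$ is an isomorphic embedding, as desired. The only nontrivial step is the Hahn--Banach identification of $\ell_1$-fixing with non-separable adjoint range; everything else is routine bookkeeping around Milutin's theorem.
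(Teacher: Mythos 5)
Your proposal is correct and takes exactly the route the paper gestures at: reduce to $K=[0,1]$ via Milutin's isomorphism and then invoke Rosenthal's theorem about operators on $C([0,1])$ with non-separable adjoint range. The paper states Theorem 2.1 without proof, citing \cite{Ro1} and \cite{Mi}, so the one piece you actually had to supply — that an operator on $C([0,1])$ fixing a copy of $\ell_1$ has non-separable adjoint range, via Hahn--Banach extension of functionals $e^*\circ(S|_E)^{-1}$ off $S(E)$ and the surjectivity of the induced map onto $E^*\cong\ell_\infty$ — is precisely the standard bridging lemma, and your argument for it is correct.
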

We refer the reader to \cite{Ga}, \cite{Ro3} and the references
therein for stronger versions of Theorem \ref{2t1}.

\subsection{$\llll_\infty$-spaces}

We recall that if $X$ and $Y$ are two isomorphic Banach spaces
(not necessarily infinite-dimensional), then their
\textit{Banach-Mazur distance} is defined by
\[ d(X,Y)=\inf\big\{ \|T\|\cdot\|T^{-1}\|:
T:X\to Y \text{ is an isomorphism}\big\}.\]
Now let $\lambda\geq 1$. An infinite-dimensional Banach
space $X$ is said to be a
\textit{$\llll_{\infty,\lambda}$-space} if for every
finite-dimensional subspace $F$ of $X$ there exists
a finite-dimensional subspace $G$ of $X$ with
$F\subseteq G$ and such that $d(G,\ell^n_\infty)\leq\lambda$
where $n=\mathrm{dim}G$. The space $X$ is said to be
a \textit{$\llll_{\infty,\lambda+}$-space} if $X$
is a $\llll_{\infty,\theta}$-space for any $\theta>\lambda$.
Finally, the space $X$ is said to be a
\textit{$\llll_{\infty}$-space} if $X$ is
a $\llll_{\infty,\lambda}$-space for some $\lambda\geq 1$.
The class of $\llll_\infty$-spaces was introduced by
J. Lindenstrauss and A. Pe{\l}czy\'{n}ski \cite{LP1}.

It follows readily by the above definition that
if $X$ is a separable $\llll_{\infty,\lambda}$-space,
then there exists an increasing (with respect to inclusion)
sequence $(G_n)$ of finite-dimensional subspaces of $X$
with $\bigcup_n G_n$ dense in $X$ and
such that $d(G_n,\ell_\infty^{m_n})\leq\lambda$ where
$m_n=\mathrm{dim}G_n$ for every $n\in\nn$.
It is relatively easy to see that this property actually
characterizes separable $\llll_{\infty}$-spaces.
In particular, if $X$ is a separable Banach space
and there exists an increasing sequence $(F_n)$ of
finite-dimensional subspaces of $X$ with $\bigcup_n F_n$
dense in $X$ and such that $d(F_n,\ell_\infty^{m_n})\leq\lambda$
where $m_n=\mathrm{dim}F_n$, then $X$ is a
$\llll_{\infty,\lambda+}$-space.

The book of J. Bourgain \cite{Bou2} contains a presentation
of the theory of $\llll_\infty$-spaces and a discussion of
many remarkable examples. Among the structural properties
of $\llll_\infty$-spaces the following one, due to
W. B. Johnson, H. P. Rosenthal and M. Zippin, will be
of particular importance for us.
\begin{thm}[\cite{JRZ}]
\label{2t2} Every separable $\llll_\infty$-space has a
Schauder basis.
\end{thm}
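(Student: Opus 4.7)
The plan is to upgrade the local $\ell_\infty^m$-approximation of $X$ into an actual Schauder basis. As observed in the paragraph preceding the statement, any separable $\llll_{\infty,\lambda}$-space admits an increasing chain $G_1\subseteq G_2\subseteq\cdots$ of finite-dimensional subspaces with $\bigcup_n G_n$ dense in $X$ and isomorphisms $T_n\colon G_n\to\ell_\infty^{m_n}$ satisfying $\|T_n\|\cdot\|T_n^{-1}\|\leq\lambda$. The strategy proceeds in two stages: first, extract from $(G_n)$ a monotone finite-dimensional decomposition of $X$; second, convert this FDD into a genuine basis by choosing compatible bases within each block.

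For the first stage, I would arrange that consecutive terms admit projections of controlled norm. Any $k$-dimensional subspace of $\ell_\infty^m$ is $C(\lambda)$-complemented (via, for example, an Auerbach basis argument inside $\ell_\infty^m$, or an averaging argument in $\ell_1^m$), so after passing to a subsequence and transferring through the $T_n$ one obtains projections $\pi_n\colon G_{n+1}\to G_n$ with $\sup_n\|\pi_n\|<\infty$. A standard diagonal/compactness extension then produces a commuting family of projections $P_n\colon X\to G_n$, uniformly bounded in norm, yielding an FDD $X=\sum_n E_n$ with $E_n$ the range of $P_n-P_{n-1}$ (setting $P_0=0$) and with FDD constant depending only on $\lambda$.

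The main obstacle is the second stage: the block $E_n$ sits inside $G_n$ but is not itself \emph{a priori} close to any $\ell_\infty^k$, so one cannot simply paste together standard unit vector bases. The technical core of \cite{JRZ} is an extension lemma internal to $\ell_\infty^m$: given a $k$-dimensional subspace $F\subseteq\ell_\infty^m$ together with a basis $(f_1,\dots,f_k)$ of $F$ of basis constant at most $K$, one can extend it to an algebraic basis $(f_1,\dots,f_m)$ of the full space $\ell_\infty^m$ whose basis constant is bounded by a function of $K$ and $\lambda$ alone. Iterating this lemma between consecutive levels $G_n\subseteq G_{n+1}$, after small perturbations of sizes $\ee_n$ with $\sum_n\ee_n<\infty$ (permitted by the density of $\bigcup_n G_n$ together with the small-perturbation principle for basic sequences), one inductively constructs a sequence $(e_i)_{i\in\nn}$ in $X$ whose initial segment of length $m_n$ spans a subspace $\ee_n$-close to $G_n$, with basis constants uniformly bounded in terms of $\lambda$. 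Since $\bigcup_n G_n$ is dense in $X$, the sequence $(e_i)$ is the desired Schauder basis of $X$.
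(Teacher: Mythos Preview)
The paper does not prove this statement at all: Theorem~\ref{2t2} is quoted as a result of Johnson, Rosenthal and Zippin, with a reference to \cite{JRZ} for the proof and to \cite{Ro3} for refinements, and is then used as a black box (in Corollary~\ref{4c13}). So there is no ``paper's own proof'' to compare against.

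That said, your sketch follows the standard \cite{JRZ} route fairly closely: build an FDD from the chain $(G_n)$ using uniformly bounded projections, then thread a basis through the levels via a basis-extension lemma in $\ell_\infty^m$. One point to sharpen: your justification for the projections $\pi_n\colon G_{n+1}\to G_n$ via an ``Auerbach basis argument'' is not the right mechanism---an Auerbach projection has norm growing with the dimension. The correct (and simpler) reason is that $\ell_\infty^{m_n}$ is $1$-injective: the identity $G_n\to G_n$, pushed through $T_n$, extends from $G_n$ to all of $G_{n+1}$ with norm controlled by $\lambda$, giving $\|\pi_n\|\leq\lambda$ uniformly. With that fix, the outline is sound.
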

We refer the reader to \cite{Ro3} for a discussion on
further properties of $\llll_\infty$-spaces,
as well as, for a presentation of refinements of
Theorem \ref{2t2}.

\subsection{Descriptive set theoretical preliminaries}

A \textit{standard Borel space} is a measurable space
$(X,S)$ for which there exists a Polish topology $\tau$
on $X$ such that the Borel $\sg$-algebra of $(X,\tau)$
coincides with $S$. A classical result in the theory
of Borel sets in Polish spaces asserts that if $(X,S)$
is a standard Borel space and $B\in S$, then $B$ equipped
with the relative $\sg$-algebra is also a standard Borel
space (see \cite[Corollary 13.4]{Kechris}).

A basic example of a standard Borel space is the
\textit{Effros-Borel structure} on the set of closed
subsets of a Polish space. Specifically, let $X$ be a
Polish space and let us denote by $F(X)$ the set of
all closed subsets of $X$. We endow $F(X)$ with the
$\sg$-algebra $\Sigma$ generated by the sets
\[ \{F\in F(X): F\cap U\neq\varnothing\} \]
where $U$ ranges over all open subsets of $X$.
It is well-known (see \cite[Theorem 12.6]{Kechris})
that the measurable space $(F(X),\Sigma)$ is standard.

A subset $A$ of a standard Borel space $(X,S)$ is
said to be \textit{analytic} if there exists a
Borel map $f:\nn^\nn\to X$ with $f(\nn^\nn)=A$.
A subset of $(X,S)$ is said to be \textit{co-analytic}
if its complement is analytic. We will adopt the modern,
logical, notation to denote these classes. Hence,
$\SB^1_1$ stands for the class of analytic sets,
while $\PB^1_1$ stands for the class of
co-analytic ones.

Let us also recall the notion of a $\PB^1_1$-rank,
introduced by Y. N. Moschovakis. Let $X$ be a
standard Borel space and $B$ be a co-analytic
subset of $X$. A map $\psi:B\to\omega_1$ is said to be
a $\PB^1_1$-\textit{rank} on $B$ if there exist relations
$\leq_\Sigma, \leq_\Pi\subseteq X\times X$ in
$\SB^1_1$ and $\PB^1_1$ respectively such that
for every $y\in B$ we have
\[ x\in B \text{ and } \psi(x)\leq \psi(y)
\Leftrightarrow x\leq_\Sigma y
\Leftrightarrow x\leq_\Pi y. \]
In the following lemma we gather all the structural
properties of $\PB^1_1$-ranks that we need. For a proof,
as well as for a thorough presentation of Rank Theory,
we refer to \cite[\S 34]{Kechris}.
\begin{lem}
\label{2l3} Let $X$ be a standard Borel space,
$B$ a co-analytic subset of $X$ and $\psi:B\to \omega_1$
a $\PB^1_1$-rank on $B$. Then the following hold.
\begin{enumerate}
\item[(i)] For every $\xi<\omega_1$ the set
$B_\xi=\{x\in B: \psi(x)\leq \xi\}$ is Borel.
\item[(ii)] (Boundedness) If $A\subseteq B$ is analytic,
then $\sup\{\psi(x):x\in A\}<\omega_1$.
\end{enumerate}
\end{lem}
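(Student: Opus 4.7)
The plan is to prove (i) by transfinite induction on $\xi$, using the fact, built into the definition, that for every $y\in B$ the two ``vertical sections'' $\{x: x\leq_\Sigma y\}$ and $\{x: x\leq_\Pi y\}$ agree and hence determine a Borel set; and to prove (ii) by applying the Kunen--Martin theorem to an auxiliary analytic, well-founded relation constructed from the prewellorder induced by $\psi$.

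For (i), fix $\xi<\omega_1$ and assume inductively that $B_\eta$ is Borel for every $\eta<\xi$. If $\psi$ attains the value $\xi$, fix some $y\in B$ with $\psi(y)=\xi$; then by the defining property of a $\PB^1_1$-rank,
\[ B_\xi\;=\;\{x\in X: x\leq_\Sigma y\}\;=\;\{x\in X: x\leq_\Pi y\}, \]
so $B_\xi$ belongs simultaneously to $\SB^1_1$ and $\PB^1_1$ and is therefore Borel. If $\psi$ does not attain $\xi$, then $B_\xi$ coincides either with $B_\eta$ for the largest attained $\eta<\xi$ or, in the limit case, with the countable union $\bigcup_{\eta<\xi}B_\eta$ (since $\xi<\omega_1$ is itself countable). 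In either case $B_\xi$ is Borel.

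For (ii), let $A\subseteq B$ be analytic and form
\[ A^*\;=\;\{y\in X: \exists\, x\in A \text{ with } y\leq_\Sigma x\}, \]
which is analytic as the projection of the intersection of $X\times A$ and $\leq_\Sigma$. Since for $x\in B$ the section $\{y:y\leq_\Sigma x\}$ coincides with $\{y\in B:\psi(y)\leq\psi(x)\}$, one has $A\subseteq A^*\subseteq B$, $\sup_{A^*}\psi=\sup_A\psi$, and $A^*$ is downward closed with respect to the strict analytic relation $<_\Sigma$ defined by $x<_\Sigma y\Leftrightarrow(x\leq_\Sigma y)\wedge\neg(y\leq_\Pi x)$. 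The restriction $R=(<_\Sigma)\cap(A^*\times A^*)$ is then analytic, and it is well-founded because any infinite $R$-descending chain in $A^*\subseteq B$ would produce a strictly decreasing sequence of countable ordinals under $\psi$. By the Kunen--Martin theorem the rank of $R$ is strictly less than $\omega_1$; a short transfinite induction using the downward closure of $A^*$ identifies this rank with the order type of the initial segment of the prewellorder on $B$ cut out by $\sup_A\psi$. From this one concludes that $\sup_A\psi<\omega_1$: otherwise $A^*$ would equal $B$ and $\psi(B)$ would have countable order type while being cofinal in $\omega_1$, a contradiction.

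The main technical point to get right is the final passage from the Kunen--Martin bound on the rank of the auxiliary relation $R$ to the desired bound on $\sup_A\psi$. The downward closure of $A^*$ is the essential ingredient here: it guarantees that below any $y\in A^*$ all of $B$'s $<_\Sigma$-predecessors already lie in $A^*$, so that the Kunen--Martin rank faithfully records the order position of $\psi(y)$ in the full prewellorder on $B$ and not merely its position within $A$.
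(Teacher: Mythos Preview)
The paper does not give its own proof of this lemma; it simply refers the reader to \cite[\S 34]{Kechris}. Your argument is essentially the standard textbook proof one finds there: part (i) via Souslin's theorem applied to the sections $\{x:x\leq_\Sigma y\}=\{x:x\leq_\Pi y\}$, and part (ii) via the Kunen--Martin theorem applied to the analytic well-founded strict relation on the downward closure of $A$. Both parts are correct as written.

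Two minor remarks. In (i) the transfinite induction is not really needed: for any $\xi<\omega_1$ the set $B_\xi$ is the countable union, over the (countably many) attained values $\eta\leq\xi$, of the Borel sets $\{x:x\leq_\Sigma y_\eta\}$ for any chosen $y_\eta$ with $\psi(y_\eta)=\eta$. In (ii) your final paragraph is the right diagnosis: the downward closure of $A^*$ is exactly what makes the Kunen--Martin rank of $R$ coincide with the order type of $\psi(A^*)$, and the contradiction when $\sup_A\psi=\omega_1$ then comes from the regularity of $\omega_1$.
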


\subsection{The standard Borel space of separable Banach spaces}

Let $X$ be a separable Banach space (not necessarily
infinite-dimensional) and let $(F(X),\Sigma)$ be the
Effors-Borel structure on the set of all closed subsets
of $X$. Consider the set
\[ \subs(X)=\big\{ Y\in F(X): Y \text{ is a linear subspace}\big\}.\]
It is easy to see that $\subs(X)$ is a Borel subset of $F(X)$,
and so, a standard Borel space on its own. If $X=C(2^\nn)$,
then we shall denote the space $\subs\big(C(2^\nn)\big)$
by $\sbs$ and we shall refer to the space $\sbs$ as the standard Borel
space of all separable Banach spaces. We will need the following
consequence of the Kuratowski--Ryll-Nardzewski selection theorem
(see \cite[page 264]{Kechris} for more details).
\begin{prop}
\label{2p4} Let $X$ be a separable Banach space. Then there
exists a sequence $d_n:\subs(X)\to X$ $(n\in\nn)$ of Borel
maps with $d_n(Y)\in Y$ and such that the sequence $\big(d_n(Y)\big)$
is norm-dense in $Y$ for every $Y\in\subs(X)$.
\end{prop}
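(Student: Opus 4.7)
The plan is a direct appeal to the Kuratowski--Ryll-Nardzewski selection theorem in its ``dense selector'' form. Consider the multivalued map $\Phi:\subs(X)\to F(X)$ sending each subspace $Y$ to itself. For every $Y\in\subs(X)$ the value $\Phi(Y)=Y$ is closed (by the very definition of $\subs(X)$) and nonempty (it contains $0$), so $\Phi$ is nonempty-closed-valued.

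Next I would verify that $\Phi$ is ``weakly measurable'' in the sense required by the Kuratowski--Ryll-Nardzewski theorem: namely, that for every open $U\subseteq X$ the set $\{Y\in\subs(X):Y\cap U\neq\varnothing\}$ is Borel in the standard Borel space $\subs(X)$. This is immediate from the definition of the Effros-Borel $\sg$-algebra on $F(X)$, which is generated precisely by the sets $\{F\in F(X):F\cap U\neq\varnothing\}$ for $U$ open in $X$, combined with the fact recalled just before the proposition that $\subs(X)$ is a Borel subset of $F(X)$.

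The ``dense selector'' strengthening of the Kuratowski--Ryll-Nardzewski theorem (see \cite[Theorem 12.13]{Kechris} and the surrounding discussion) then produces a sequence $d_n:\subs(X)\to X$ $(n\in\nn)$ of Borel maps with $d_n(Y)\in\Phi(Y)=Y$ for every $n$ and $Y$, and such that $\{d_n(Y):n\in\nn\}$ is norm-dense in $Y$ for every $Y\in\subs(X)$. This is exactly the conclusion of the proposition.

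There is essentially no real obstacle: the only content is checking weak measurability of $\Phi$, which is purely definitional. If one preferred a hands-on construction avoiding a direct quotation of the dense-selector form, one could instead fix a countable base $(U_k)$ for the topology of $X$, apply the single-selector form of Kuratowski--Ryll-Nardzewski on each Borel set $B_k=\{Y\in\subs(X):Y\cap U_k\neq\varnothing\}$ to obtain a Borel map $d_k$ with $d_k(Y)\in Y\cap U_k$ on $B_k$, extend by $d_k(Y)=0$ on $\subs(X)\setminus B_k$ (using that $0\in Y$ for every subspace $Y$), and verify density directly from the fact that $(U_k)$ is a base. In either case the argument is routine once the multifunction $\Phi$ is identified.
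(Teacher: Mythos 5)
Your proof is correct and follows exactly the route the paper intends: the paper offers no argument of its own but simply cites the Kuratowski--Ryll-Nardzewski selection theorem (referring to \cite[page 264]{Kechris}), and your write-up fills in that citation with the standard verification of weak measurability and the dense-selector form.
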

Let $Z$ be a Banach space with a Schauder basis (\textit{throughout
the paper, when we say that a Banach space $Z$ has a Schauder
basis, then we implicitly assume that $Z$ is infinite-dimensional}).
We fix a normalized Schauder basis $(e_n)$ of $Z$. Consider the set
\[ \ncon_Z=\{ X\in\sbs: X \text{ does not contain an isomorphic
copy of } Z\}.\]
Notice that if $Z=C(2^\nn)$, then the above defined class
coincides with the class $\nun$ of all $X\in\sbs$ which are
non-universal. Let $\delta>0$ and let $Y$ be an arbitrary
separable Banach space. Following J. Bourgain \cite{Bou1},
we introduce a tree $\mathbf{T}(Y,Z,(e_n),\delta)$
on $Y$ defined by the rule
\[ (y_0,...,y_k)\in\mathbf{T}(Y,Z,(e_n),\delta) \Leftrightarrow
(y_n)_{n=0}^k \text{ is } \delta-\text{equivalent to }
(e_n)_{n=0}^k.\]
It is easy to see that $Y\in\ncon_Z$ if and only if for every
$\delta>0$ the tree $\mathbf{T}(Y,Z,(e_n),\delta)$ is well-founded.
We set $\phi_{\ncon_Z}(Y)=\omega_1$ if $Y\notin\ncon_Z$,
while if $Y\in\ncon_Z$ we define
\begin{equation}
\label{2e1} \phi_{\ncon_Z}(Y)=\sup\big\{ o\big(\mathbf{T}(Y,Z,(e_n),\delta)\big):
\delta>0\big\}.
\end{equation}
If $Z=C(2^\nn)$, then we shall denote by $\phi_{\nun}(Y)$
the above quantity. Although the definition of the ordinal
ranking $\phi_{\ncon_Z}$ depends on the choice of the Schauder
basis $(e_n)$ of $Z$, it can be shown that it is actually
independent of such a choice in a very strong sense
(see \cite[Theorem 10]{AD} for more details).

In \cite{Bou1}, Bourgain proved that for every Banach space
$Z$ with a Schauder basis and every $Y\in\sbs$ we have that
$Y\in\ncon_Z$ if and only if $\phi_{\ncon_Z}(Y)<\omega_1$.
We need the following refinement of this result.
\begin{thm}[\cite{Bos2}]
\label{2t5} Let $Z$ be a Banach space with a Schauder basis.
Then the following hold.
\begin{enumerate}
\item[(i)] The set $\nun$ is $\PB^1_1$ and the map
$\phi_{\nun}:\nun\to\omega_1$ is a $\PB^1_1$-rank on $\nun$.
\item[(ii)] The set $\ncon_Z$ is $\PB^1_1$ and the map
$\phi_{\ncon_Z}:\ncon_Z\to\omega_1$ is a $\PB^1_1$-rank on
$\ncon_Z$.
\end{enumerate}
\end{thm}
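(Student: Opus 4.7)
The plan is to reduce Theorem \ref{2t5} to the classical fact that the order function $o$ on $\wf(\nn)$ is a $\PB^1_1$-rank. I will prove part (ii); part (i) then follows as the special case $Z=C(2^\nn)$, equipped with any fixed normalized Schauder basis of $C(2^\nn)$.

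Using Proposition \ref{2p4}, fix Borel maps $d_n:\sbs\to C(2^\nn)$ such that $(d_n(Y))$ is norm-dense in $Y$ for every $Y\in\sbs$. For each positive rational $\delta$ define a map $S_\delta:\sbs\to\tr(\nn)$ by
\[ (n_0,\dots,n_k)\in S_\delta(Y)\iff (d_{n_0}(Y),\dots,d_{n_k}(Y))\text{ is }\delta\text{-equivalent to }(e_0,\dots,e_k). \]
Since $\delta$-equivalence of a finite tuple amounts to a countable conjunction of inequalities involving norms of rational linear combinations of the vectors, and each such norm depends in a Borel way on $Y$, the map $Y\mapsto S_\delta(Y)$ is Borel. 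The next step is to show: $Y\in\ncon_Z$ iff $S_\delta(Y)\in\wf(\nn)$ for every rational $\delta>0$. If some $S_\delta(Y)$ has an infinite branch $(n_k)$, then $(d_{n_k}(Y))$ is $\delta$-equivalent to $(e_n)$ and $Y$ contains a copy of $Z$. Conversely, if $Z$ embeds isomorphically in $Y$, then the images of $(e_n)$ are $\delta$-equivalent to $(e_n)$ for some $\delta>0$, and a standard density-plus-perturbation argument replaces each image by a sufficiently close $d_{n_k}(Y)$ to yield an infinite branch of $S_{\delta'}(Y)$ for any $\delta'<\delta$.

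Since $\wf(\nn)$ is $\PB^1_1$ in $\tr(\nn)$, the above characterization exhibits $\ncon_Z$ as a countable intersection of Borel preimages of a $\PB^1_1$ set, hence $\PB^1_1$. For the rank statement, fix an enumeration $(\delta_k)$ of the positive rationals and amalgamate the $S_{\delta_k}(Y)$ into a single Borel tree
\[ \mathcal{T}(Y)=\{\varnothing\}\cup\bigl\{(k)^{\con}s : k\in\nn,\ s\in S_{\delta_k}(Y)\bigr\}. \]
Then $\mathcal{T}(Y)\in\wf(\nn)$ iff $Y\in\ncon_Z$, and a direct computation with the iterated derivative gives $o(\mathcal{T}(Y))=\sup_k o(S_{\delta_k}(Y))+1$. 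Since $o$ is a classical $\PB^1_1$-rank on $\wf(\nn)$ (see \cite[\S 34]{Kechris}), pulling the witnessing $\SB^1_1$ and $\PB^1_1$ relations back through the Borel map $Y\mapsto\mathcal{T}(Y)$ produces the relations on $\sbs\times\sbs$ certifying that $\phi_{\ncon_Z}$ is a $\PB^1_1$-rank on $\ncon_Z$.

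The main technical obstacle is verifying the identity $\phi_{\ncon_Z}(Y)=\sup_k o(S_{\delta_k}(Y))$, which is what allows one to transfer the rank property. The inequality $\sup_k o(S_{\delta_k}(Y))\leq\phi_{\ncon_Z}(Y)$ is immediate: the map $(n_0,\dots,n_j)\mapsto(d_{n_0}(Y),\dots,d_{n_j}(Y))$ is an order-preserving embedding of $S_{\delta_k}(Y)$ into $\mathbf{T}(Y,Z,(e_n),\delta_k)$. The reverse inequality again rests on density: by transfinite induction on $\xi$, every node of $\mathbf{T}(Y,Z,(e_n),\delta_k)$ that survives to the $\xi$-th derivative can be approximated, using the openness of strict $\delta'$-equivalence for $\delta'<\delta_k$ together with the density of $(d_n(Y))$, by a tuple in $S_{\delta'}(Y)$ which also survives to the $\xi$-th derivative. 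The delicate point is choosing these approximations consistently enough along the tree to preserve ranks; this is the step where the argument has to be carried out with care.
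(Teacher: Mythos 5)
The paper does not prove Theorem \ref{2t5}; it cites the result from Bossard \cite{Bos2} and treats it as a black box, so there is no in-paper proof to compare against. Your overall strategy --- coding the Bourgain tree as a Borel family of trees on $\nn$ via the Kuratowski--Ryll-Nardzewski selectors $d_n$, amalgamating over rational $\delta$ into a single tree $\mathcal{T}(Y)$, and transferring the classical $\PB^1_1$-rank $o$ on $\wf(\nn)$ --- is the standard route and is in substance the argument Bossard gives, so the skeleton is sound.

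Two points need repair. First, your perturbation runs in the wrong direction: you want to replace a node of $\mathbf{T}(Y,Z,(e_n),\delta_k)$ by a nearby tuple in $S_{\delta'}(Y)$ with $\delta'<\delta_k$, but a perturbation can only \emph{worsen} the equivalence constant, so there is no reason such a tuple exists. What the density argument actually yields is a monotone (order-preserving) map $\mathbf{T}(Y,Z,(e_n),\delta)\to S_{\delta'}(Y)$ for $\delta'>\delta$: choose $n_i$ depending only on $y_i$ and its position $i$, with $\|y_i-d_{n_i}(Y)\|$ decaying fast enough so that $\delta$-equivalence degrades only to $\delta'$-equivalence. That gives $o\big(\mathbf{T}(Y,Z,(e_n),\delta)\big)\leq o\big(S_{\delta'}(Y)\big)$ for all rational $\delta'>\delta$, and together with the trivial embedding $S_\delta(Y)\hookrightarrow\mathbf{T}(Y,Z,(e_n),\delta)$, taking suprema over rational parameters on both sides yields $\phi_{\ncon_Z}(Y)=\sup_k o\big(S_{\delta_k}(Y)\big)$. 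Note also that this construction removes the ``choosing approximations consistently along the tree'' worry you flagged: since $n_i$ depends only on the $i$-th coordinate, the map is automatically monotone. Second, the final transfer should be phrased as: $Y\mapsto\mathcal{T}(Y)$ is a Borel map into $\tr(\nn)$ with $\mathcal{T}^{-1}\big(\wf(\nn)\big)=\ncon_Z$, hence $Y\mapsto o\big(\mathcal{T}(Y)\big)$ is a $\PB^1_1$-rank on $\ncon_Z$ by the usual pull-back lemma; the identity above then shows that $\phi_{\ncon_Z}$ induces the same prewellordering (up to the harmless additive constant coming from the root of $\mathcal{T}(Y)$), hence is itself a $\PB^1_1$-rank. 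With these corrections the argument is complete and consistent with the cited reference.
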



\section{A result on quotient spaces}

Throughout this section by $X, Y, Z$ and $E$ we shall denote
infinite-dimensional Banach spaces. We say that a space
$X$ is \textit{hereditarily} $Y$ if every subspace $Z$ of
$X$ contains an isomorphic copy of $Y$. A space $X$ is said
to have the \textit{Schur property} if every weakly convergent
sequence in $X$ is automatically norm convergent. It is an
immediate consequence of Rosenthal's Dichotomy \cite{Ro2}
that a space with the Schur property is hereditarily $\ell_1$.
The converse is not valid, as shown by J. Bourgain.
We will need the following stability result concerning
quotient spaces.
\begin{prop}
\label{3p1} Let $E$ be a minimal Banach space not containing
$\ell_1$. Let also $X$ be a Banach space and $Y$ be a subspace
of $X$. Assume that the quotient $X/Y$ has the Schur property.
Then the following hold.
\begin{enumerate}
\item[(i)] If $Y$ is non-universal, then so is $X$.
\item[(ii)] If $Y$ does not contain an isomorphic copy of $E$,
then neither $X$ does.
\end{enumerate}
\end{prop}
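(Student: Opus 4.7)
The strategy in both parts is the contrapositive: assume $X$ contains an isomorphic copy of the target space ($C(2^\nn)$ in (i), using Milutin's theorem to identify universality with containing $C(2^\nn)$; $E$ in (ii)), and produce a copy of the same space inside $Y$. Let $W\subseteq X$ denote the chosen copy, $q:X\to X/Y$ the quotient map, and $T=q|_W:W\to X/Y$.

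The unified core of the proof is the following perturbation step. Suppose $(w_n)\subseteq W$ is a normalized basic sequence whose closed span contains a copy of the target space, and suppose moreover $\sum_n\|Tw_n\|$ is finite and as small as desired. Since $\|Tw_n\|=\mathrm{dist}(w_n,Y)$, one may choose $y_n\in Y$ with $\|w_n-y_n\|$ arbitrarily close to $\|Tw_n\|$, so that $\sum_n\|w_n-y_n\|$ is small. The small-perturbation principle for basic sequences then yields that $(y_n)\subseteq Y$ is a basic sequence equivalent to $(w_n)$, whence $\ospan(y_n)$ contains a copy of the target space, contradicting the hypothesis on $Y$. The task thus reduces to producing such a sequence $(w_n)$ in $W$.

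For (ii), since $E$ (hence $W$) does not contain $\ell_1$, Rosenthal's $\ell_1$-theorem guarantees that every bounded sequence in $W$ has a weakly Cauchy subsequence. Starting from any normalized basic sequence in $W$, extract a weakly Cauchy subsequence, take successive differences, and normalize (they are bounded below in norm by biorthogonality to the original basis), yielding a weakly null normalized basic sequence $(d_n)\subseteq W$. Then $(Td_n)$ is weakly null in $X/Y$ and hence norm null by the Schur property of $X/Y$; after passing to a subsequence we may arrange $\sum_n\|Td_n\|$ to be arbitrarily small. Minimality of $E$ ensures $E\hookrightarrow\ospan(d_n)$, and the perturbation step delivers $E\hookrightarrow Y$.

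For (i), $C(2^\nn)$ is not minimal, so more work is required to ensure the closed span of $(w_n)$ remains a copy of $C(2^\nn)$. First, $T$ is compact: since $X/Y$ is Schur and hence contains no $c_0$, $T$ does not fix $c_0$, and by Pe\l czy\'nski's property $V$ for $C(K)$-spaces $T$ is weakly compact; weak compactness into a Schur target then forces compactness. By Theorem \ref{2t1}, $T$ does not fix $\ell_1$ either. Representing $T$ by a Borel vector measure $m$ of finite semivariation on $2^\nn$ (via the identification $W\cong C(2^\nn)$), one shows that for every $\epsilon>0$ there is a nonempty basic clopen set $N_s\subseteq 2^\nn$ of semivariation less than $\epsilon$; the corresponding subspace of $W$ (images of functions supported in $N_s$) is isomorphic to $C(N_s)\cong C(2^\nn)$, and $T$ has norm less than $\epsilon$ on it. Iterating inside a decreasing chain of such clopen sets produces a normalized basic sequence $(w_n)\subseteq W$ whose closed span is isomorphic to $C(2^\nn)$ and satisfies $\sum_n\|Tw_n\|$ as small as one wishes, so that the perturbation step concludes. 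The main obstacle is this last construction: one must carefully balance the decay of the semivariation along the nested clopen sets against the preservation of a full $C(2^\nn)$-structure in the closed span of $(w_n)$.
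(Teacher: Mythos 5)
Your argument for part~(ii) is correct and is essentially the paper's proof phrased more explicitly: where you extract a weakly Cauchy sequence, pass to normalized differences, and apply Schur plus a perturbation, the paper abstracts the same idea into the statement that $Q|_Z$ is strictly singular (because an isomorphic image inside the Schur space $X/Y$ would contain $\ell_1$) and then produces, via a gliding-hump, isomorphic infinite-dimensional subspaces of $Z$ and $Y$; minimality of $E$ finishes in both cases. So (ii) is fine.

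Part~(i) is where the gap lies, and you have correctly identified it yourself without resolving it. A \emph{decreasing chain} of clopen sets $N_{s_0}\supseteq N_{s_1}\supseteq\cdots$ with small semivariation cannot produce a sequence $(w_n)$ whose closed span is isomorphic to $C(2^\nn)$: the subspaces of functions supported in the $N_{s_j}$ are nested, their intersection degenerates to (at best) a one-point or meagre set, and the closed span of elements chosen from such a chain has no reason to carry the full Cantor structure. What is needed is not a chain but a \emph{binary tree} of choices, i.e.\ a dyadic subtree of $2^{<\nn}$. This is exactly what the paper does. It works with the normalized Haar-type system $(f_t)_{t\in\ooo}$ (characteristic functions of basic clopen sets indexed by the nodes ending in $0$), which is a monotone basis of $C(2^\nn)$, and proves two facts that replace your semivariation estimate: first, the restriction of this basis to \emph{any} dyadic subtree is $1$-equivalent to the whole basis (Claim~\ref{3c2}(ii)), which is the combinatorial mechanism that preserves the $C(2^\nn)$-structure under selection; second, below every node there is a weakly null subsequence of basis vectors (Claim~\ref{3c2}(iii)), so by the Schur property one can pick a node whose image is arbitrarily close to $Y$ (Claim~\ref{3c3}). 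Running the selection along both children at every node builds a dyadic subtree $D=(s_t)_{t\in\ct}$ together with vectors $y_t\in Y$ with $\sum_t\|z_t-y_t\|$ small, and the small-perturbation lemma then gives a copy of $C(2^\nn)$ inside $Y$. Note also that the compactness detour in your sketch (Pe{\l}czy\'{n}ski's property~V, vector measures, semivariation) is unnecessary even as preparation: the only analytic input the paper needs is that weak nullity of $(z_n)$ in $W$ forces $\|Q(z_n)\|\to 0$, and this follows immediately from the Schur property of $X/Y$ with no reference to weak or norm compactness of $Q|_W$. If you want to salvage your measure-theoretic language, you must replace the nested chain by a dyadic family of pairwise disjoint clopen pieces at each level, at which point you are reproducing the paper's dyadic-subtree argument in different notation.
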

\begin{proof}
(i) This part is essentially a consequence of a result due to
J. Lindenstrauss and A. Pe{\l}czy\'{n}ski asserting that the
property of not containing an isomorphic copy of $C([0,1])$
is a three-space property \cite[Theorem 2.1]{LP2}. For the
convenience of the reader, however, we shall give a proof
for this special case.

To this end, we need to introduce some pieces of notation.
Let $\ooo=\{\varnothing\} \cup\{t^{\con}0:t\in\ct\}$.
Namely, $\ooo$ is the subset of the Cantor tree consisting
of all sequences ending with $0$. If $D=(s_t)_{t\in\ct}$
is a dyadic subtree of $\ct$, we let $\ooo_D=\{s_t:t\in\ooo\}$.
Let $h_D:\ooo_D\to\nn$ be the unique bijection satisfying
$h_D(s_{t_1})<h_D(s_{t_2})$ if either $|t_1|<|t_2|$, or
$|t_1|=|t_2|$ and $t_1\prec t_2$. By $h:\ooo\to\nn$ we
shall denote the bijection corresponding to the Cantor
tree itself.

For every $t\in\ct$ we let $V_t=\{\sg\in 2^\nn:t\sqsubset\sg\}$;
that is, $V_t$ is the clopen subset of $2^\nn$ determined
by the node $t$. We set $f_t=\chi_{V_t}$. Clearly
$f_t\in C(2^\nn)$ and $\|f_t\|=1$. Let $(t_n)$ be the
enumeration of the set $\ooo$ according to the bijection
$h$ and consider the corresponding sequence $(f_{t_n})$.
The main properties of the sequence $(f_{t_n})$ are
summarized in the following claim.
\begin{claim}
\label{3c2} The following hold.
\begin{enumerate}
\item[(i)] The sequence $(f_{t_n})$ is a normalized
monotone basis of $C(2^\nn)$.
\item[(ii)] Let $D=(s_t)_{t\in\ct}$ be a dyadic subtree
of $\ct$ and let $(s_n)$ be the enumeration of the set
$\ooo_D$ according to $h_D$. Then the corresponding
sequence $(f_{s_n})$ is $1$-equivalent to the basis
$(f_{t_n})$.
\item[(iii)] For every $t\in\ct$ there exists
a sequence $(w_n)$ in $\ct$ with $t\sqsubset
w_n$ for every $n\in\nn$ and such that the
sequence $(f_{w_n})$ is weakly-null.
\end{enumerate}
\end{claim}
\noindent \textit{Proof of Claim \ref{3c2}.} We will
give the proof of part (i) leaving to the reader to
supply the details for parts (ii) and (iii). So,
consider the sequence $(f_{t_n})$. First we observe
that $f_t\in\sspan\{ f_{t_n}:n\in\nn\}$ for every
$t\in\ct$. Hence $\ospan\{f_{t_n}:n\in\nn\}=C(2^\nn)$.
Thus, it is enough to show that $(f_{t_n})$ is a monotone
Schauder basic sequence. To see this, let $k,m\in\nn$ with
$k<m$ and $a_0,...,a_m\in\rr$. There exists $\sg\in 2^\nn$
such that
\[ \big\| \sum_{n=0}^k a_n f_{t_n}\big\| =
\big| \sum_{n=0}^k a_n f_{t_n}(\sg) \big|. \]
We make we make the following simple (though
crucial) observation. Let $l,j\in\nn$ with $t_l
\sqsubset t_j$ (by the properties of $h$ this implies that
$l<j$). Then there exists a node $s\in\ct$ with $t_l\sqsubset s$,
$|s|=|t_j|$ and such that $f_{t_j}(x)=0$ for every $x\in V_s$.
Using this observation we see that there exists
$\tau\in 2^\nn$ such that $f_{t_n}(\tau)=f_{t_n}(\sg)$
if $0\leq n\leq k$ while $f_{t_n}(\tau)=0$
if $k<n\leq m$. Hence
\[ \big\| \sum_{n=0}^k a_n f_{t_n}\big\| =
\big| \sum_{n=0}^k a_n f_{t_n}(\sg) \big| =
\big| \sum_{n=0}^m a_n f_{t_n}(\tau) \big| \leq
\big\| \sum_{n=0}^m a_n f_{t_n}\big\|. \]
This shows that $(f_{t_n})$ is a monotone basis
of $C(2^\nn)$, as desired. \hfill $\lozenge$
\medskip

After this preliminary discussion we are ready to
proceed to the proof of part (i). Clearly it is
enough to show that if the space $X$ contains an
isomorphic copy of $C(2^\nn)$, then so does $Y$.
So, let $Z$ be a subspace of $X$ which is isomorphic
to $C(2^\nn)$. We fix an isomorphism $T:C(2^\nn)\to Z$
and we set $K=\|T\|\cdot \|T^{-1}\|$. Let also
$Q:X\to X/Y$ be the natural quotient map. The basic
step for constructing a subspace $Y'$ of $Y$ which
is isomorphic to $C(2^\nn)$ is given in the following
claim.
\begin{claim}
\label{3c3} Let $(z_n)$ be a normalized weakly-null sequence
in $Z$ and let $r>0$ arbitrary. Then there exist
$k\in\nn$ and a vector $y\in Y$ such that $\|z_k-y\|<r$.
\end{claim}
\noindent \textit{Proof of Claim \ref{3c3}.} Consider
the sequence $\big(Q(z_n)\big)$. By our assumptions,
it is weakly-null. The space $X/Y$ has the Schur property.
Hence, $\lim_n \|Q(z_n)\|=0$. Let $k\in\nn$ with
$\|Q(z_k)\|<r$. By definition, there exists a vector
$y\in Y$ such that $\|Q(z_k)\|\leq \|z_k-y\|< r$.
The claim is proved. \hfill $\lozenge$
\medskip

Using Claim \ref{3c2}(iii) and Claim \ref{3c3},
we may construct recursively a dyadic subtree
$D=(s_t)_{t\in\ct}$ of $\ct$ and a family $(y_t)_{t\in\ct}$
in $Y$ such that, setting $z_t=\frac{T(f_{s_t})}{\|T(f_{s_t})\|}$
for every $t\in\ct$, we have
\[ \sum_{t\in\ct} \| z_t-y_t\|<\frac{1}{2K}. \]
By \cite[Proposition 1.a.9]{LT} and Claim \ref{3c2}(ii), we see that
if $(t_n)$ is the enumeration of the set $\ooo$ according to $h$, then
the corresponding sequence $(y_{t_n})$ is equivalent to the sequence
$(f_{t_n})$. By Claim \ref{3c2}(i), it follows that the subspace
$Y'=\ospan\{y_{t_n}:n\in\nn\}$ of $Y$ is isomorphic to $C(2^\nn)$.
The proof of part (i) is completed. \\
(ii) We argue by contradiction. So, assume that there exists
a subspace $Z$ of $X$ which is isomorphic to $E$. As in part
(i), let us denote by $Q:X\to X/Y$ the natural quotient map.
The fact that the space $E$ does not contain $\ell_1$ yields
that the operator $Q|_Z$ is strictly singular. This, in turn,
implies that
\[ \mathrm{dist}(S_{Z'},S_Y)=\min\big\{ \|z-y\|:z\in Z', y\in Y
\text{ and } \|z\|=\|y\|=1\big\}=0\]
for every infinite-dimensional subspace $Z'$ of $Z$. Hence,
there exist a subspace $Z''$ of $Z$ and a subspace $Y'$ of $Y$
which are isomorphic. As $E$ is minimal, we see that
$Z''$ must contain an isomorphic copy of $E$. Hence so does
$Y$, a contradiction. The proof is completed.
\end{proof}


\section{Parameterizing the Bourgain-Pisier construction}

In \cite{BP}, J. Bourgain and G. Pisier proved the following.
\begin{thm}[\cite{BP}, Theorem 2.1]
\label{4t1} Let $\lambda>1$ and let $X$ be any separable
Banach space. Then there exists a separable
$\llll_{\infty,\lambda+}$-space, denoted by $\llll_\lambda[X]$,
which contains $X$ isometrically and is such that the
quotient $\llll_\lambda[X]/X$ has the Radon-Nikodym and
the Schur properties.
\end{thm}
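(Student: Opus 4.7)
\noindent \textit{Proof plan.} The plan is to construct $\llll_\lambda[X]$ as the closure of an increasing chain $X=X_0\subseteq X_1\subseteq X_2\subseteq\cdots$ of separable Banach spaces obtained by iterating a ``finite-dimensional pushout'' in the spirit of Bourgain--Delbaen, with a careful parameter choice at each step that controls the structure of the quotient.

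First I would isolate the basic extension step. Fix $\theta>1$, a separable space $W$, and a finite-dimensional subspace $E\subseteq W$. A routine net argument produces an integer $n$ and a linear map $j:E\to\ell_\infty^n$ satisfying $\|e\|\leq\|j(e)\|_\infty\leq\theta\|e\|$ for all $e\in E$. I would then define
\[ P(W,E,j):=(W\oplus_1\ell_\infty^n)/N,\qquad N:=\{(e,-j(e)):e\in E\},\]
equipped with the quotient norm. A short calculation using the lower bound on $j$ and the triangle inequality shows that $w\mapsto(w,0)+N$ is an isometry from $W$ into $P(W,E,j)$, while $v\mapsto(0,v)+N$ is a $\theta$-isomorphic embedding of $\ell_\infty^n$ whose image $G$ contains the isometric copy of $E$ (via the identification $(0,j(e))\equiv(e,0)$ modulo $N$). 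In particular $d(G,\ell_\infty^n)\leq\theta$, and $P(W,E,j)/W$ is isomorphic to the finite-dimensional space $\ell_\infty^n/j(E)$.

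Next I would iterate. Pick $\theta_k\searrow\lambda$ with $\prod_k\theta_k$ controlled. Using Proposition \ref{2p4}, maintain a countable dense subset of each $X_k$, and diagonally enumerate a list $(E_k)$ of finite-dimensional subspaces of $\bigcup_k X_k$ so that every finite-dimensional subspace of $\bigcup_k X_k$ is arbitrarily well approximated by infinitely many $E_k$. Set $X_{k+1}:=P(X_k,E_k,j_k)$ for appropriate maps $j_k:E_k\to\ell_\infty^{n_k}$ with distortion at most $\theta_k$, and put $\llll_\lambda[X]:=\overline{\bigcup_k X_k}$. Each inclusion $X_k\hookrightarrow X_{k+1}$ is isometric, hence so is $X\hookrightarrow\llll_\lambda[X]$. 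The $\llll_{\infty,\lambda+}$ property follows from the bookkeeping: any finite-dimensional $F\subseteq\llll_\lambda[X]$ and any $\theta>\lambda$ can be matched to some $E_k$ close to $F$ with $\theta_k<\theta$, yielding a subspace of $X_{k+1}$ containing $F$ that is $\theta$-isomorphic to $\ell_\infty^{n_k}$, after an absorbing perturbation.

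The genuine obstacle lies in arranging the Radon--Nikodym and Schur properties of $\llll_\lambda[X]/X$. Note that $\llll_\lambda[X]/X$ is the completion of $\bigcup_k X_k/X$, and each step $X_{k+1}/X_k$ is isomorphic to $\ell_\infty^{n_k}/j_k(E_k)$---equivalently, dual to a subspace of $\ell_1^{n_k}$. The heart of the Bourgain--Pisier argument is to select the dimensions $n_k$ and the maps $j_k$ via a probabilistic scheme, using $K$-convexity and the extraction techniques Pisier developed in \cite{Pi}, so that these successive finite-dimensional quotients are uniformly isomorphic to pieces of $\ell_1$ with \emph{summable} excesses over the target distortion. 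Once this is achieved, the limit quotient $\llll_\lambda[X]/X$ embeds isomorphically into a countable $\ell_1$-sum of finite-dimensional spaces, and therefore inherits both the Radon--Nikodym property and the Schur property from that sum. Coordinating all the distortions so that both the $\llll_{\infty,\lambda+}$ property and the $\ell_1$-type structure of the quotient hold simultaneously, with isometric pushforward of $X$ through the tower, is the most delicate part of the construction.
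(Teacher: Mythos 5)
Your first two steps are on target: the pushout $P(W,E,j)=(W\oplus_1\ell_\infty^n)/N$ with $N=\{(e,-j(e))\}$ is exactly the Kisliakov construction the paper records as Definition~\ref{4d3} and Proposition~\ref{4p4}, and iterating it along a dense diagonal enumeration of finite-dimensional subspaces does give an isometric tower $X=X_0\hookrightarrow X_1\hookrightarrow\cdots$ whose union closure is $\llll_{\infty,\lambda+}$, provided you arrange the distortions carefully. So far this matches the paper's \S4.2.

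The argument breaks down exactly where you flagged the difficulty, and the route you propose for the Schur and Radon--Nikodym properties cannot work. You claim that $\llll_\lambda[X]/X$ embeds isomorphically into a countable $\ell_1$-sum of finite-dimensional spaces. But taking $X=\{0\}$, the quotient is $\llll_\lambda[\{0\}]$ itself, which is an infinite-dimensional $\llll_\infty$-space --- a Bourgain--Delbaen-type space --- and such a space cannot embed into any $\ell_1$-sum of finite-dimensional spaces (for instance, $\llll_\infty$-spaces contain uniformly complemented $\ell_\infty^n$'s, which is incompatible with sitting inside an $\ell_1$-sum). The actual mechanism is quite different: one must choose the operator $u:S\to X$ at each step to be a strict contraction, $\|u\|\le\eta<1$, so that every link $j_n:E_n\to E_{n+1}$ is an $\eta$-admissible isometric embedding in the sense recalled in \S4.1. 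The substantive input is then Theorem~\ref{4t5} (Bourgain--Pisier's Theorem~1.6), which asserts that the inductive limit of \emph{any} system of finite-dimensional spaces linked by $\eta$-admissible embeddings with $0<\eta<1$ automatically has the Schur and Radon--Nikodym properties; combined with the isometry $B/S\cong X_1/X$ from Proposition~\ref{4p4}, this gives the properties for $\llll_\lambda[X]/X$. The parameters must be tuned jointly ($\tfrac{1}{\lambda}<\eta<1$ and $1+\varepsilon<\lambda\eta$ as in the paper) so that $\eta$-admissibility and the $\llll_{\infty,\lambda+}$ bound coexist. Finally, the reference to a ``probabilistic scheme using $K$-convexity and Pisier's extraction techniques'' is a misattribution: Pisier's contribution here is the $\eta$-admissibility framework and Theorem~\ref{4t5}, not a probabilistic selection of the maps $j_k$.
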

This section is devoted to the proof of the following parameterized
version of their result.
\begin{thm}
\label{4t2} For every $\lambda>1$ the set
$\mathcal{L}_\lambda\subseteq \sbs\times\sbs$ defined by
\[ (X,Y)\in\mathcal{L}_\lambda \Leftrightarrow
Y \text{ is isometric to } \llll_\lambda[X] \]
is analytic.
\end{thm}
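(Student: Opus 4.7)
The plan is to exhibit $\mathcal{L}_\lambda$ as the projection of a Borel set along one additional Polish coordinate, thereby showing it is $\SB^1_1$. Concretely, I will introduce a Polish space $\wb$ encoding the choices made in the Bourgain-Pisier inductive construction, build a Borel map $\Psi:\sbs\times\wb\to\sbs$ such that for each $X$ and each $\omega$ the space $\Psi(X,\omega)$ is a realization of $\llll_\lambda[X]$ inside $C(2^\nn)$, and then observe that $(X,Y)\in\mathcal{L}_\lambda$ iff there is $\omega\in\wb$ with $Y$ isometric to $\Psi(X,\omega)$. Since isometric equivalence of separable Banach spaces is analytic in $\sbs\times\sbs$ (being witnessed by a pair of dense sequences whose linear combinations have equal norms on rational coefficients), projecting over $\omega$ together with the witnessing sequences will give the conclusion.

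To build $\Psi$, I first apply Proposition \ref{2p4} to obtain a Borel sequence $d_n:\sbs\to C(2^\nn)$ with $(d_n(X))$ dense in $X$, and set $E_n(X)=\sspan\{d_0(X),\dots,d_{n-1}(X)\}$; after a Borel Gram--Schmidt step, $(E_n(X))$ becomes an increasing chain of finite-dimensional subspaces with dense union in $X$, whose dimensions, bases and norms depend in a Borel way on $X$. The Bourgain-Pisier extension step underlying Theorem \ref{4t1} enlarges a finite-dimensional $F\subseteq X$ to a finite-dimensional superspace $\widehat{F}\supseteq F$ obtained from finitely many pieces of explicit data: an integer controlling the dimensional jump, a selection of an $\ell_\infty^k$-like coordinate system within tolerance $\lambda$, and the averaging operators provided by Pisier's scheme. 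All these data live in a countable product of Polish spaces, which we take as $\wb$. For each $(X,\omega)$, iteration of this step produces an abstract space together with a distinguished dense sequence; a further application of Kuratowski--Ryll-Nardzewski supplies a Borel isometric embedding of this abstract space into $C(2^\nn)$, and we declare its image to be $\Psi(X,\omega)$.

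Verifying that each step is Borel in $(X,\omega)$ is routine bookkeeping: addition, scalar multiplication, norm computation on a finite-dimensional $F\subseteq X$, formation of $\ell_1$-sums, and passage to quotients by finite-dimensional subspaces are all continuous in the parameters involved. Consequently $\Psi$ is Borel. The relation ``$Y$ is isometric to $\Psi(X,\omega)$'' is then $\SB^1_1$, being witnessed by the existence of sequences $(y_n)\subseteq Y$ and $(z_n)\subseteq \Psi(X,\omega)$, each dense in its ambient space, for which $\|\sum_{i=0}^m q_iy_i\|=\|\sum_{i=0}^m q_iz_i\|$ holds for every finite tuple of rationals $q_0,\dots,q_m$. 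Projecting out $\omega$ and the two dense sequences yields $\mathcal{L}_\lambda$ as the continuous image of an analytic set, hence analytic.

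The genuine obstacle lies entirely in the middle paragraph: one must organize the Bourgain-Pisier construction so that it depends in a Borel way on both the input $X\in\sbs$ and the countable sequence of choices $\omega\in\wb$. No new ideas beyond those of \cite{BP} are required, but a careful transcription of their inductive extension lemma into the $\sbs$-framework, together with systematic invocations of selection theorems at each finite-dimensional stage, is needed to confirm that the resulting map $(X,\omega)\mapsto \Psi(X,\omega)$ is Borel into $\sbs$.
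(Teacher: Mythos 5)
Your high-level strategy matches the paper's: parameterize the Bourgain--Pisier construction by a Polish space of auxiliary choices, make the parameterized construction Borel, and exhibit $\mathcal{L}_\lambda$ as a projection. This is exactly what the paper carries out via its ``tree-code'' and ``$\lambda$-coherence'' apparatus. But the proposal halts precisely where the proof begins: the entire substance of the theorem is the Borel parameterization, which you acknowledge (``a careful transcription \dots is needed'') but do not perform. This is the analogue of the paper's Lemma~\ref{4l10}, whose proof occupies most of \S 4.3, and calling it ``routine bookkeeping'' underestimates the content.

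Two concrete points where the sketch is on thin ice. First, the Kisliakov extension step requires choosing, at each stage $k$, the least integer $m_k$, a subspace $S_k\subseteq\ell^{m_k}_\infty$ and a $(1+\varepsilon)$-isomorphism $T:S_k\to H_k$, all in a manner Borel in the accumulated data; the paper achieves this by fixing a dense sequence $(\sigma_i)$ in $2^\nn$, introducing the evaluation operators $v_d:C(2^\nn)\to\ell^d_\infty$, $f\mapsto\big(f(\sigma_0),\dots,f(\sigma_{d-1})\big)$, and partitioning the code set $C_k$ into Borel pieces $P_d$ on which $v_d$ realizes the needed distortion. Simply declaring ``an $\ell_\infty^k$-like coordinate system within tolerance $\lambda$'' to be a coordinate of $\omega$ does not show that the set of valid $(X,\omega)$ is Borel, nor that the map $\Psi$ is Borel on that set; this is where the argument actually lives. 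Second, your appeal to Kuratowski--Ryll-Nardzewski to produce a Borel isometric embedding of the abstract inductive limit into $C(2^\nn)$ is not a standard application of that theorem — it is not clear which Borel, closed-valued multifunction one would be selecting from. The paper avoids the problem entirely by keeping every finite stage inside $C(2^\nn)$ from the outset (an element of $\Omega$ is a tuple $(X,Y,(x_n),(y_n))$ with $X,Y\in\sbs$) and then expressing ``$Y$ isometric to $\llll_\lambda[X]$'' as an existential over a dense sequence $(y_i)\in C(2^\nn)^\nn$ whose rational linear combinations have prescribed norms (the displayed equivalence preceding Lemma~\ref{4l10}), so that no selection of an embedding is ever required.
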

The section is organized as follows. In \S 4.1 we present
some preliminary tools needed in the proof of Theorem \ref{4t2}
and the Bourgain-Pisier construction. The construction itself
is briefly recalled in \S 4.2. The proof of Theorem \ref{4t2}
is given in \S 4.3 while in \S 4.4 we isolate some of its
consequences.

\subsection{Preliminary tools}

A system of isometric embeddings
is a sequence $(X_n,j_n)$ where $(X_n)$ is a sequence of
Banach spaces and $j_n:X_n\to X_{n+1}$ is an isometric
embedding for every $n\in\nn$. Let us recall the definition of
the inductive limit $X$ of a system $(X_n,j_n)$ of isometric
embeddings. We consider, first, the vector subspace of
$\Pi_n X_n$ consisting of all sequences $(x_n)$ such that
$j_n(x_n)=x_{n+1}$ for all $n$ large enough. We equip this
subspace with the semi-norm $\|(x_n)\|=\lim_n \|x_n\|$.
Let $\mathcal{X}$ be the vector space obtained after
passing to the quotient by the kernel of that semi-norm.
The space $X$ is then defined to be the completion of
$\mathcal{X}$. Notice that there exists a sequence $(J_n)$
of isometric embeddings $J_n:X_n\to X$ such that
$J_{n+1}\circ j_n=J_n$ for every $n\in\nn$ and
if $E_n=J_n(X_n)$, then the union $\bigcup_n E_n$
is dense in $X$. Hence, in practice,
we may do as if the sequence $(X_n)$ was an increasing
(with respect to inclusion) sequence of subspaces of a
bigger space and we may identify the space $X$ with the
closure of the vector space $\bigcup_n X_n$.

We also recall the following construction, due to S. V. Kisliakov
\cite{Ki}.
\begin{defn}
\label{4d3} Let $B$ and $X$ be Banach spaces and $\eta\leq 1$.
Let also $S$ be a subspace of $B$ and $u:S\to X$
a linear operator with $\|u\|\leq \eta$. Let $B\oplus_1\!X$
be the vector space $B\times X$ equipped with the
norm $\|(b,x)\|=\|b\|+\|x\|$ and consider the subspace
$N=\big\{(s,-u(s)): s\in S\big\}$ of $B\oplus_1\!X$. We
define $X_1=(B\oplus_1\!X)/N$. Moreover, denoting by
$Q:B\oplus_1\!X\to X_1$ the natural quotient map, we
define $\tilde{u}:B\to X_1$ and $j:X\to X_1$ by
\[ \tilde{u}(b)=Q\big((b,0)\big) \ \text{ and } \
j(x)=Q\big((0,x)\big) \]
for every $b\in B$ and every $x\in X$. We call the
family $(X_1,j,\tilde{u})$ as the \emph{canonical triple}
associated to $(B,S,u,X,\eta)$.
\end{defn}
We gather below some basic properties of the canonical triple.
\begin{prop}
\label{4p4} Let $B, S, u, X$ and $\eta$ be as in Definition
\ref{4d3} and consider the canonical triple $(X_1,j,\tilde{u})$
associated to $(B,S,u,X,\eta)$. Then $j$ is an isometric
embedding, $\|\tilde{u}\|\leq 1$ and $\tilde{u}(s)=j\big(u(s)\big)$
for every $s\in S$. Moreover, the spaces $B/S$ and $X_1/X$
are isometric.
\end{prop}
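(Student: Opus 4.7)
The plan is to unpack the definition of the quotient norm on $X_1=(B\oplus_1\!X)/N$ and verify each assertion by direct computation, handling the four claims in the order they appear in the statement.

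First I would compute, for $x\in X$,
\[ \|j(x)\|_{X_1}=\inf_{s\in S}\|(0,x)-(s,-u(s))\|=\inf_{s\in S}\bigl(\|s\|+\|x+u(s)\|\bigr).\]
Taking $s=0$ gives the upper bound $\|j(x)\|\leq\|x\|$. For the lower bound, the triangle inequality together with $\|u\|\leq\eta\leq 1$ yields
\[ \|s\|+\|x+u(s)\|\geq \|s\|+\|x\|-\eta\|s\|\geq \|x\|,\]
which shows $j$ is an isometric embedding. The bound $\|\tilde{u}\|\leq 1$ is immediate from $\|\tilde{u}(b)\|=\|Q((b,0))\|\leq\|(b,0)\|=\|b\|$. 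The identity $\tilde{u}(s)=j(u(s))$ for $s\in S$ follows at once by noting that $\tilde{u}(s)-j(u(s))=Q\big((s,-u(s))\big)=0$ because $(s,-u(s))\in N$.

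The main step—and the one where one has to be careful—is exhibiting an isometry between $B/S$ and $X_1/X$ (with $X$ identified with $j(X)\subseteq X_1$ via the previous point). I would define $\Phi:B/S\to X_1/j(X)$ by $\Phi(b+S)=\tilde{u}(b)+j(X)$. Well-definedness follows from the identity of the previous paragraph: if $s\in S$ then $\tilde{u}(s)=j(u(s))\in j(X)$. Surjectivity follows because every element of $X_1$ has the form $Q\big((b,x)\big)=\tilde{u}(b)+j(x)$, so $\Phi(b+S)$ covers the coset of $Q\big((b,x)\big)$. For the isometric property, I would compute
\[ \|\Phi(b+S)\|=\inf_{x\in X}\|\tilde{u}(b)-j(x)\|_{X_1}=\inf_{x\in X,\,s\in S}\bigl(\|b-s\|+\|u(s)-x\|\bigr).\]
For each fixed $s\in S$ the choice $x=u(s)$ makes the second summand vanish, so the double infimum collapses to $\inf_{s\in S}\|b-s\|=\|b+S\|_{B/S}$, and the reverse inequality is trivial since $s=0,\,x=0$ gives $\|b\|+0$ which is not what we want — rather, the reverse inequality holds because each term $\|b-s\|+\|u(s)-x\|$ is at least $\|b-s\|\geq\|b+S\|_{B/S}$. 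Hence $\Phi$ is an isometric bijection.

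No real obstacle is expected; the one point needing mild care is the choice $x=u(s)$ in the double infimum, which is what makes the quotient $X_1/X$ see only the quotient $B/S$ and not any contribution from $X$. All four assertions then follow from the single explicit formula for the quotient norm on $X_1$ together with the hypothesis $\eta\leq 1$, which is used precisely to make $j$ isometric (for $\eta>1$ one would only obtain an isomorphic embedding with distortion).
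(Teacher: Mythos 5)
Your proof is correct; each of the four assertions follows exactly as you compute them from the explicit formula for the quotient norm on $X_1=(B\oplus_1 X)/N$, and your use of $\eta\leq 1$ to lower-bound $\|s\|+\|x+u(s)\|$ is precisely where the hypothesis enters. The paper itself does not prove Proposition \ref{4p4} but refers the reader to Kisliakov's and Pisier's papers; the direct verification you give is the standard argument found there, so there is no genuine divergence of method — only the minor stylistic point that your remark about ``$s=0,\,x=0$ gives $\|b\|+0$ which is not what we want'' is a detour you should simply delete, since the clean reverse inequality $\|b-s\|+\|u(s)-x\|\geq\|b-s\|\geq\|b+S\|_{B/S}$ that you state next already settles it.
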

We refer the reader to \cite{Ki} and \cite{Pi} for a proof
of Proposition \ref{4p4} as well as for refinements of it.

We recall two more properties of the above construction
which were isolated in \cite[Proposition 1.3]{BP}. The first
property is its minimality. Specifically, let $B, S, u, X$ and $\eta$ be
as in Definition \ref{4d3} and consider any commutative diagram
\begin{displaymath}
\xymatrix{ B \ar[r]^{w} & Z \\
S \ar[u]^{\mathrm{Id}} \ar[r]_{u} & X \ar[u]_{v} }
\end{displaymath}
where $Z$ is a Banach space and $w:B\to Z$ and $v:X\to Z$
are bounded linear operators. Then there exists a
\textit{unique} bounded operator $T:X_1\to Z$ such that
$w(b)=T\big(\tilde{u}(b)\big)$ for every $b\in B$
and $v(x)=T\big(j(x)\big)$ for every $x\in X$.

The second one is its uniqueness. For suppose that
$(X_1',j',\tilde{u}')$ is another triple satisfying
the conclusion of Proposition \ref{4p4} and the
minimality property described above. Then there
exists an isometry $T:X_1\to X_1'$ such that
$T\big(j(x)\big)=j'(x)$ for every $x\in X$.

Following \cite{BP}, we call such a triple $(X_1,j,\tilde{u})$
as described above, as a triple \textit{associated}
to $(B,S,u,X,\eta)$ and we say that the corresponding
isometric embedding $j:X\to X_1$ is an
$\eta$-\textit{admissible} embedding.

The basic tool for establishing the crucial properties
of the Bourgain-Pisier construction is given in the
following theorem.
\begin{thm}[\cite{BP}, Theorem 1.6]
\label{4t5} Let $\eta$ be such that $0<\eta <1$. Let also
$(F_n,j_n)$ be a system of isometric embeddings, where the
sequence $(F_n)$ consists of finite-dimensional Banach
spaces and for every $n\in\nn$ the isometric embedding
$j_n:F_n\to F_{n+1}$ is $\eta$-admissible. Then the
inductive limit of the system $(F_n,j_n)$ has the
Radon-Nikodym and the Schur properties.
\end{thm}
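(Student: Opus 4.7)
The plan is to exploit the explicit structure of $\eta$-admissible embeddings together with the strict inequality $\eta<1$ to obtain a geometric-series bound along the filtration $(F_n)$. I would identify the inductive limit with $X=\overline{\bigcup_n F_n}$, and, for each $n$, record that Proposition~\ref{4p4} applied to the data $(B_n,S_n,u_n,F_n,\eta)$ producing $j_n$ yields a canonical quotient map $\pi_n\colon F_{n+1}\to F_{n+1}/j_n(F_n)$ that is isometric to $B_n/S_n$, together with a (non-unique) decomposition $x=j_n(y)+\tilde u_n(b)$ for every $x\in F_{n+1}$, with $\|y\|+\|b\|\leq(1+\varepsilon)\|x\|$ for any prescribed $\varepsilon>0$.

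The key quantitative lemma I would prove is a contraction estimate: if $x\in F_{n+1}$ then one can choose such a representative with $\|b\|\leq C\|\pi_n(x)\|_{B_n/S_n}$ and $\|y\|\leq\|x\|+\eta\|b\|$, where the point is that modifying the representative $(b,-u_n(s)+y')$ inside the kernel $N_n$ costs in $F_n$ exactly $\|u_n(s)\|\leq\eta\|s\|$, so one can minimize $\|b\|$ down to the quotient norm while paying in $\|y\|$ only a factor $\eta<1$. Iterating this level by level, any $x\in F_{n+m}$ gets a ``martingale-type'' decomposition $x=j(y_n)+\sum_{k=n}^{n+m-1}w_k$ with $w_k\in F_{n+m}$ living in the $k$-th new layer, $\|y_n\|\leq\|x\|$, and the contributions $\|w_k\|$ controlled in terms of the successive quotient norms $\|\pi_k(x)\|$, with a geometric damping factor at each level.

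For the Schur property I would argue by contradiction: let $(x_i)$ be weakly null in $X$ with $\|x_i\|\geq\delta>0$. After a small perturbation assume $x_i\in F_{m_i}$. A standard diagonal extraction, valid because each $F_n$ is \emph{finite-dimensional}, produces a subsequence whose $j_n(F_n)$-components converge for each fixed $n$; weak-nullness forces the limits to be $0$. The contraction lemma then forces $\|x_i\|\to 0$, contradicting $\|x_i\|\geq\delta$. For the Radon--Nikodym property I would apply the criterion that a separable space has RNP iff every closed bounded convex set is dentable: one uses the same decomposition to reduce the problem of producing slices of small diameter to the same problem in the finite-dimensional quotients $B_k/S_k$, where dentability is automatic, with the geometric factor $\eta<1$ absorbing the tail of the filtration.

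The main obstacle will be the contraction estimate at a single level and, more delicately, its iteration: one must ensure that the choices of representative $(b,y)\in B_n\oplus_1 F_n$ can be made coherently across many levels so that the new pieces $w_k$ indeed sum up with the $\eta$-damping rather than with constants that blow up. This is where the assumption $\eta<1$ (rather than merely $\eta\le 1$) is essential, since it is the geometric ratio of the resulting series; without it the decomposition exists but the estimates needed for both RNP and Schur fail.
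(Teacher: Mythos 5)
The paper does not prove this statement: it is quoted verbatim from Bourgain--Pisier as \cite[Theorem~1.6]{BP} and used as a black box, so there is no in-paper argument to compare against. Your sketch therefore amounts to an attempt to reprove the Bourgain--Pisier theorem, and as it stands it has a genuine gap at its core.

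The ``contraction estimate'' does not actually contract. You ask for a representative of $x\in F_{n+1}$ with $\|b\|\leq C\,\|\pi_n(x)\|$ and $\|y\|\leq\|x\|+\eta\|b\|$; even granting this, one gets $\|y\|\leq(1+\eta C)\|x\|$ at each level, so after $m$ levels the estimate compounds to $(1+\eta C)^m\|x\|$, which is the opposite of geometric damping. Your later assertion that the iteration produces $\|y_n\|\leq\|x\|$ is inconsistent with your own per-level bound. Moreover the per-level bound itself is optimistic. Replacing a representative $(b,y)$ by $(b+s,\,y-u_n(s))$ to bring $\|b+s\|$ down to the quotient norm $q=\|\pi_n(x)\|$ requires $\|s\|$ which the triangle inequality only controls from above by $\|b\|+\|b+s\|\approx\|b\|+q$, not by $\|b\|-q$; in the worst case the cost in the $y$-coordinate is $\eta(\|b\|+q)$, and the total $\|b'\|+\|y'\|$ can strictly exceed the norm of the original near-optimal representative. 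So the mechanism by which $\eta<1$ is supposed to produce a geometric series is not in place.

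The Schur part has a second problem: you speak of extracting a subsequence ``whose $j_n(F_n)$-components converge for each fixed $n$,'' but the decomposition $x=j(y)+\tilde u(b)$ is not unique and there is no canonical projection of the inductive limit onto $F_n$; the subspaces $j_n(F_n)$ are not complemented in any uniform way, so there is no well-defined ``$F_n$-component'' to which a diagonal argument applies. Likewise the RNP part gives no actual mechanism for transporting slices of small diameter from the finite-dimensional quotients $B_k/S_k$ back into bounded convex subsets of the inductive limit; dentability in a quotient does not pass back to the domain. You are right that finite-dimensionality of the $F_n$ and the strict inequality $\eta<1$ are the essential hypotheses, and that some decomposition along the filtration is involved, but the quantitative lemma you isolate as ``the key'' is not correct as stated and the two derived arguments lean on it. The actual Bourgain--Pisier proof is substantially more delicate; to repair your sketch you would at minimum need a genuinely decaying estimate (not one with a $(1+O(\eta))^m$ blow-up) and a canonical, uniformly bounded way to read off ``level contributions,'' neither of which your outline provides.
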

\begin{rem}[\cite{BP}, Remark 1.5]
\label{4r6} Let $B, S, u, X$ and $\eta$ be as in Definition
\ref{4d3} and assume that there exists $0<\delta\leq 1$ such
that $\|u(s)\|\geq \delta\|s\|$ for every $s\in S$. Then,
for every triple $(X'_1,j',\tilde{u}')$ associated to
$(B,S,u,X,\eta)$ we have $\|\tilde{u}'(b)\|\geq
\delta\|b\|$ for every $b\in B$. Indeed, notice that
it is enough to verify this property only for the canonical
triple $(X_1,j,\tilde{u})$. Invoking Definition \ref{4d3}
we see that
\[ \|\tilde{u}(b)\| \stackrel{\mathrm{def}}{=}
\inf\big\{ \|b+s\|+\|\!-\!u(s)\|:s\in S\big\}
\geq  \inf\big\{ \delta \|b+s\|+\delta \|s\|:s\in S\big\}
= \delta \|b\|\]
for every $b\in B$, as desired.
\end{rem}

\subsection{The construction of the space $\llll_\lambda[X]$}

In this subsection we shall describe the Bourgain-Pisier
construction, following the presentation in \cite{BP}.
So, let $\lambda>1$ and let $X$ be a separable Banach space.
In the argument below we shall use the following simple fact.
\begin{fact}
\label{4f7} Let $H$ be a finite-dimensional space. Let also $\ee>0$ arbitrary.
Then, there exist $m\in\nn$, a subspace $S$ of $\ell^m_\infty$ and an isomorphism
$T:S\to H$ satisfying $\|s\|\leq \|T(s)\|\leq (1+\ee) \|s\|$ for every $s\in S$.
\end{fact}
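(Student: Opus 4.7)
The plan is to realize $H$ as a subspace of some $\ell^m_\infty$ via the standard evaluation embedding coming from a fine net in the dual unit ball. Since $H$ is finite-dimensional, so is $H^*$, and thus the unit ball $B_{H^*}$ is norm-compact. So for any prescribed $\delta>0$ I can choose a finite $\delta$-net $\{f_1,\dots,f_m\}\subseteq B_{H^*}$.

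The candidate embedding is the evaluation map $\Phi:H\to \ell_\infty^m$ defined by
\[ \Phi(h)=\big(f_1(h),\dots,f_m(h)\big). \]
The upper bound $\|\Phi(h)\|_\infty\leq \|h\|$ is immediate from $\|f_i\|\leq 1$. For the lower bound, pick any $f\in B_{H^*}$ and some $f_i$ with $\|f-f_i\|\leq\delta$; then $|f(h)|\leq |f_i(h)|+\delta\|h\|\leq \|\Phi(h)\|_\infty+\delta\|h\|$. Taking the supremum over $f\in B_{H^*}$ gives $\|h\|\leq\|\Phi(h)\|_\infty+\delta\|h\|$, that is, $(1-\delta)\|h\|\leq \|\Phi(h)\|_\infty\leq \|h\|$.

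Now set $S=\Phi(H)$, a subspace of $\ell^m_\infty$, and let $T:S\to H$ be the inverse of the (by the above bijective) map $\Phi:H\to S$. For $s=\Phi(h)\in S$ we have
\[ \|s\|=\|\Phi(h)\|_\infty\leq \|h\|=\|T(s)\| \quad\text{and}\quad \|T(s)\|=\|h\|\leq \tfrac{1}{1-\delta}\|\Phi(h)\|_\infty=\tfrac{1}{1-\delta}\|s\|. \]
Finally I pick $\delta>0$ small enough that $\tfrac{1}{1-\delta}\leq 1+\varepsilon$ (for instance $\delta=\varepsilon/(1+\varepsilon)$), which yields the desired inequality $\|s\|\leq\|T(s)\|\leq(1+\varepsilon)\|s\|$.

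There is no real obstacle here; the only point requiring attention is the choice of $\delta$ as a function of $\varepsilon$, together with the fact that compactness of $B_{H^*}$ (which uses finite-dimensionality of $H$) is what allows a finite net to be chosen at all.
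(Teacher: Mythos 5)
Your proof is correct. The paper states Fact \ref{4f7} without proof (it is introduced as a ``simple fact'' preceding the recursive construction), so there is no proof in the paper to compare against; your argument via a $\delta$-net in the compact dual ball $B_{H^*}$ and the evaluation embedding $\Phi(h)=(f_1(h),\dots,f_m(h))$ is exactly the standard folklore argument one would supply, and the bookkeeping ($(1-\delta)\|h\|\le\|\Phi(h)\|_\infty\le\|h\|$, then take $T=\Phi^{-1}$ on $S=\Phi(H)$ and choose $\delta=\varepsilon/(1+\varepsilon)$) is carried out correctly.
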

We fix $0<\eta<1$ such that $\frac{1}{\lambda}<\eta<1$.
We also fix $\ee>0$ with $1+\ee<\lambda\eta$. Let
$(F_n)$ be an increasing sequence of finite-dimensional
subspaces of $X$ such that $\bigcup_n F_n$ is dense in $X$
(the sequence $(F_n)$ is not necessarily \textit{strictly}
increasing, as we are not assuming that the space $X$ is
infinite-dimensional). By recursion, we shall construct
\begin{enumerate}
\item[(C1)] a system $(E_n,j_n)$ of isometric embeddings, and
\item[(C2)] a sequence $(G_n)$ of finite-dimensional spaces
\end{enumerate}
such that for every $n\in\nn$ the following are satisfied.
\begin{enumerate}
\item[(P1)] $G_n\subseteq E_n$ and $G_0=\{0\}$.
\item[(P2)] The embedding $j_n:E_n\to E_{n+1}$ is
$\eta$-admissible and $E_0=X$.
\item[(P3)] $(j_{n-1}\circ ...\circ j_0)(F_{n-1})\cup
j_{n-1}(G_{n-1})\subseteq G_n$ for every $n\geq 1$.
\item[(P4)] $d(G_n,\ell_\infty^{m_n})\leq \lambda$
where $\mathrm{dim}G_n=m_n\geq n$.
\end{enumerate}
As the first step is identical to the general one,
we may assume that for some $k\in\nn$ with $k\geq 1$
the spaces $(G_n)_{n=0}^k$ and $(E_n)_{n=0}^k$ and the
$\eta$-admissible isometric embeddings $(j_n)_{n=0}^{k-1}$
have been constructed. Let $H_k$ be the subspace of $E_k$
spanned by $(j_{k-1}\circ ...\circ j_0)(F_k)\cup G_k$
(if $k=0$, then we take $H_0=F_0$). Let $m_k$ be the
least integer with $m_k\geq k+1$ and for which there
exist a subspace $S_k$ of $\ell^{m_k}_\infty$ and an
isomorphism $T:S_k\to H_k$ satisfying
$\|s\|\leq \|T(s)\|\leq (1+\ee) \|s\|$ for every $s\in S_k$.
By Fact \ref{4f7}, $m_k$ is well-defined. Define
$u:S_k\to E_k$ by $u(s)=\frac{1}{\lambda}T(s)$ and notice that
$u(S_k)=H_k$, $\|u\|\leq \eta$ and $\|u^{-1}|_{H_k}\|\leq \lambda$.
Let $(Y,j,\tilde{u})$ be the canonical triple associated
to $(\ell^{m_k}_\infty, S_k, u, E_k,\eta)$. We set
$E_{k+1}=Y$, $j_{k+1}=j$ and $G_{k+1}=\tilde{u}(\ell^{m_k}_\infty)$.
By Proposition \ref{4p4} and Remark \ref{4r6}, the spaces
$G_{k+1}$ and $E_{k+1}$, and the embedding $j_{k+1}$ satisfy
(P1)-(P4) above. The construction is completed.

Let now $Z$ be the inductive limit of the system $(E_n,j_n)$.
As we have remarked in \S 4.1, the sequence $(E_n)$ can be
identified with an increasing sequence of subspaces of $Z$.
Under this point of view, we let $\llll_\lambda[X]$ to be
the closure of $\bigcup_n G_n$. By property (P3), we see that
$\llll_\lambda[X]$ contains an isometric copy of $X$, while by
property (P4) it follows that the space $\llll_\lambda[X]$ is
$\llll_{\infty,\lambda+}$. Finally, the fact that the quotient
$\llll_\lambda[X]/X$ has the Radon-Nikodym and the Schur properties
is essentially a consequence of Theorem \ref{4t5} (see \cite{BP}
for more details).

\subsection{Proof of Theorem \ref{4t2}}

Let $\lambda>1$ be given and fix $\eta>0$ and $\ee>0$ such
that $\frac{1}{\lambda}<\eta<1$ and $1+\ee<\lambda\eta$.
Below we will adopt the following notational
conventions. By $\wb$ we shall denote the Borel
subset of $\sbs\times \sbs\times C(2^\nn)^\nn\times C(2^\nn)^\nn$
defined by
\begin{eqnarray*}
\wlong\in\wb & \Leftrightarrow & \forall n\in\nn \ (x_n\in X
\text{ and } y_n\in Y) \text{ and } \\
& & (x_n) \text{ is dense in }
X, \ (y_n) \text{ is dense in } Y, \\
& & Y\subseteq X \text{ and } \forall n\in\nn \ \exists m\in\nn
\text{ with } y_n=x_m.
\end{eqnarray*}
That is, an element $\wlong\in \wb$ codes a separable Banach space
$X$, a dense sequence $(x_n)$ in $X$, a subspace $Y$ of $X$
and a subsequence $(y_n)$ of $(x_n)$ which is dense in $Y$.
Given $\ws=\wlong\in \wb$ we set $p_0(\ws)=X$ and $p_1(\ws)=Y$.
We will reserve the letter $t$ to denote elements of $\wb^{<\nn}$.
The letter $\alpha$ shall be used to denote elements of $\wb^\nn$.
For every $t\in\wb^{<\nn}$ non-empty and every $i<|t|$ we
set $X_i^t=p_0\big(t(i)\big)$ and $Y_i^t=p_1\big(t(i)\big)$.
Respectively, for every $\alpha\in\wb^\nn$
and every $i\in\nn$ we set $X_i^\alpha=p_0\big(\alpha(i)\big)$
and $Y_i^\alpha=p_1\big(\alpha(i)\big)$.
If $X, Y$ and $Z$ are non-empty sets and $f:X\times Y\to Z$
is a map, then for every $x\in X$ by $f^x$ we shall denote
the function $f^x:Y\to Z$ defined by $f^x(y)=f(x,y)$ for
every $y\in Y$. Finally, by $d_m:\sbs\to C(2^\nn)$ $(m\in\nn)$
we denote the sequence of Borel maps obtained by
Proposition \ref{2p4} applied for $X=C(2^\nn)$.

The proof of Theorem \ref{4t2} is based on the fact that
we can appropriately encode the Bourgain-Pisier construction
so that it can be performed ``uniformly" in $X$. To this
end, we introduce the following terminology.
\medskip

\noindent \textbf{(A)} Let $k\in\nn$ with $k\geq 2$.
A \textit{code of length} $k$ is a pair $(C,\phi)$ where
$C$ is a Borel subset of $\wb^k$ and
$\phi:C\times C(2^\nn)\to C(2^\nn)$ is a
Borel map such that for every $t\in C$
the following are satisfied.
\begin{enumerate}
\item[(C1)] For every $i<k$ the space
$Y^t_i$ is finite-dimensional and $Y^t_0=\{0\}$.
\item[(C2)] The map $\phi^t:X^t_{k-2}\to C(2^\nn)$ is a linear
isometric embedding satisfying $\phi^t(X^t_{k-2})\subseteq X^t_{k-1}$
and $\phi^t(Y^t_{k-2})\subseteq Y^t_{k-1}$.
\end{enumerate}
The \textit{code of length $1$} is the pair $(C_1,\phi_1)$ where $C_1\subseteq\wb$
and $\phi_1:C_1\times C(2^\nn)\to C(2^\nn)$ are defined by
\[ t=\wlong\in C_1 \Leftrightarrow Y=\{0\}\]
and $\phi_1(t,x)=x$ for every $t\in C_1$
and every $x\in C(2^\nn)$. Clearly $C_1$
is Borel and $\phi_1$ is a Borel map. Notice
that for every $X\in\sbs$ there exists $t\in C_1$
with $X=X^t_0$.
\medskip

\noindent \textbf{(B)} Let $\fcode$ be a sequence
such that for every $k\geq 1$ the pair $(C_k,\phi_k)$
is a code of length $k$. We say that the sequence
$\fcode$ is a \textit{tree-code} if for every
$k,m\in\nn$ with $1\leq k\leq m$ we have
$C_k=\big\{ t|k: t\in C_m\big\}$. The
\textit{body} $\ccc$ of a tree-code $\fcode$
is defined by $\ccc=\big\{ \alpha\in\wb^\nn: \alpha|k \in C_k \
\forall k\geq 1\big\}$. Clearly $\ccc$ is a Borel
subset of $\wb^\nn$.

Let $\fcode$ be a tree-code and let $\ccc$ be its body.
For every $k\geq 1$, the map $\phi_k$ induces
a map $\Phi_k:\ccc\times C(2^\nn)\to C(2^\nn)$
defined by $\Phi_k(\alpha,x)=\phi_k(\alpha|k,x)$
for every $\alpha\in \ccc$ and every $x\in C(2^\nn)$.
We need to introduce two more maps. First, for every
$n,m\in\nn$ with $n<m$ we define
$\Phi_{n,m}:\ccc\times C(2^\nn)\to C(2^\nn)$
recursively by the rule $\Phi_{n,n+1}(\alpha,x)=
\Phi_{n+2}(\alpha,x)$ and $\Phi_{n,m+1}(\alpha,x)=
\Phi_{m+2}\big(\alpha,\Phi_{n,m}(\alpha,x)\big)$.
We also set $J_n=\Phi_{n+2}$ for every $n\in\nn$.
We isolate, for future use, the following
fact concerning these maps. Its proof is a
straightforward consequence of the relevant
definitions and of condition (C2).
\begin{fact}
\label{4f8} Let $\fcode$ be a tree-code and let $\ccc$
be its body. Then the following are satisfied.
\begin{enumerate}
\item[(i)] For every $n,k,m\in\nn$ with $k\geq 1$
and $n<m$, the maps $\Phi_k$ and $\Phi_{n,m}$ are Borel.
Moreover, for every $\alpha\in\ccc$ we have
$\Phi^\alpha_{n,m}(X^\alpha_n)\subseteq X^\alpha_m$
and $\Phi^\alpha_{n,m}(Y^\alpha_n)\subseteq Y^\alpha_m$.
\item[(ii)] Let $\alpha\in\ccc$. Then, for
every $n\in\nn$ the map $J_n^\alpha|_{X^\alpha_n}$
is a linear isometric embedding satisfying $J_n^\alpha(X^\alpha_n)
\subseteq X^\alpha_{n+1}$ and $J_n^\alpha(Y^\alpha_n)\subseteq
Y^\alpha_{n+1}$.
\end{enumerate}
\end{fact}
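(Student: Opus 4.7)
The plan is to prove (ii) first directly from condition (C2), and then derive (i) by a straightforward induction on $m$, using (ii) as the base case.

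For part (ii), fix $\alpha\in\ccc$ and $n\in\nn$, and set $t=\alpha|(n+2)\in C_{n+2}$. By construction of $J_n$ we have $J_n^\alpha(x)=\Phi_{n+2}(\alpha,x)=\phi_{n+2}(t,x)=\phi_{n+2}^t(x)$ for every $x\in C(2^\nn)$. Applying condition (C2) to the code $(C_{n+2},\phi_{n+2})$ of length $k=n+2$ gives that the restriction of $\phi_{n+2}^t$ to $X_{k-2}^t=X_n^t=X_n^\alpha$ is a linear isometric embedding with image contained in $X_{k-1}^t=X_{n+1}^\alpha$ and sending $Y_n^t=Y_n^\alpha$ into $Y_{n+1}^t=Y_{n+1}^\alpha$. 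This is exactly the conclusion of (ii).

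For part (i), Borel measurability of $\Phi_k$ is immediate: the map $\alpha\mapsto\alpha|k$ from $\wb^\nn$ to $\wb^k$ is continuous, so $\Phi_k(\alpha,x)=\phi_k(\alpha|k,x)$ is Borel as a composition of the Borel map $\phi_k$ with a continuous map in the first coordinate. The Borel measurability of $\Phi_{n,m}$ then follows by induction on $m\geq n+1$: for $m=n+1$ we have $\Phi_{n,n+1}=\Phi_{n+2}$, and for the inductive step $\Phi_{n,m+1}(\alpha,x)=\Phi_{m+2}(\alpha,\Phi_{n,m}(\alpha,x))$ is a composition of Borel maps.

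For the inclusions in (i), we again induct on $m>n$. The base case $m=n+1$ is exactly part (ii). For the inductive step, assume $\Phi_{n,m}^\alpha(X_n^\alpha)\subseteq X_m^\alpha$ and $\Phi_{n,m}^\alpha(Y_n^\alpha)\subseteq Y_m^\alpha$. If $x\in X_n^\alpha$ then $\Phi_{n,m}^\alpha(x)\in X_m^\alpha$; applying (ii) with $n$ replaced by $m$, the map $J_m^\alpha=\Phi_{m+2}^\alpha$ sends $X_m^\alpha$ into $X_{m+1}^\alpha$, hence $\Phi_{n,m+1}^\alpha(x)=J_m^\alpha\big(\Phi_{n,m}^\alpha(x)\big)\in X_{m+1}^\alpha$. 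The argument for $Y$ is identical. No obstacle is expected here; the content of Fact \ref{4f8} is purely a bookkeeping check that the axioms (C1)–(C2) propagate through the recursive definition of $\Phi_{n,m}$, so the only point requiring any care is to align the indexing $k-2,k-1$ in (C2) with the shift $J_n=\Phi_{n+2}$.
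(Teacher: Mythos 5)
Your proof is correct and matches the paper's intent exactly: the paper merely asserts that the fact is ``a straightforward consequence of the relevant definitions and of condition (C2),'' and you supply precisely those details, reading (ii) off from condition (C2) applied to the code of length $n+2$ and then propagating the inclusions to $\Phi_{n,m}$ by induction on $m$. The index bookkeeping (identifying $X^{\alpha|(n+2)}_{k-2}=X^\alpha_n$, etc.) and the Borelness argument via continuity of $\alpha\mapsto\alpha|k$ are both handled correctly.
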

\noindent \textbf{(C)} Let $\fcode$ be a tree-code and let $\ccc$ be its body.
Let $\alpha\in\ccc$. Consider the sequence $(X^\alpha_0,Y^\alpha_0,X^\alpha_1,Y^\alpha_1,...)$
and notice that $Y^\alpha_n$ is a finite-dimensional subspace of $X^\alpha_n$
for every $n\in\nn$. In the coding we are developing, the sequences $(X^\alpha_n)$
and $(Y^\alpha_n)$ will correspond to the sequences $(E_n)$ and $(G_n)$ obtained
following the Bourgain-Pisier construction performed to the space $X=X^\alpha_0$. This is made
precise using the auxiliary concept of $\lambda$-coherence which we are about to introduce.

So, let $\alpha\in\ccc$ arbitrary. For every $n\in\nn$ let
$F_n(X^\alpha_0)=\overline{\mathrm{span}}\{ d_i(X^\alpha_0):i\leq n\}$.
Clearly $\big(F_n(X^\alpha_0)\big)$
is an increasing sequence of finite-dimensional
subspaces of $X^\alpha_0$ with $\bigcup_n F_n(X^\alpha_0)$ dense
in $X^\alpha_0$. Let $(E^\alpha_n,j^\alpha_n)$ be the system
of isometric embeddings and $(G^\alpha_n)$ be the sequence
of finite-dimensional spaces obtained by performing
the construction described in \S 4.2 to the space
$X^\alpha_0$, the sequence $\big(F_n(X^\alpha_0)\big)$
and the numerical parameters $\lambda$, $\eta$ and $\ee$.
We will say that the tree-code $\fcode$ is
$\lambda$-\textit{coherent} if for every $\alpha\in\ccc$
there exists a sequence $T^\alpha_n:X^\alpha_n\to E^\alpha_n$
$(n\geq 1)$ of isometries such that $G^\alpha_n=T^\alpha_n(Y^\alpha_n)$
for every $n\geq 1$ and making the following diagram commutative:
\begin{displaymath}
\xymatrix{E^\alpha_0 \ar[r]^{j^\alpha_0} & E^\alpha_1
\ar[r]^{j^\alpha_1} & E^\alpha_2 \ar[r]^{j^\alpha_2} &
E^\alpha_3 \ar[r]^{j^\alpha_3} & \cdots \\
X^\alpha_0 \ar[u]^{\mathrm{Id}} \ar[r]_{J_0^\alpha|_{X^\alpha_0}} &
X^\alpha_1 \ar[u]^{T^\alpha_1} \ar[r]_{J_1^\alpha|_{X^\alpha_1}} &
X^\alpha_2 \ar[u]^{T^\alpha_2} \ar[r]_{J_2^\alpha|_{X^\alpha_2}} &
X^\alpha_3 \ar[u]^{T^\alpha_3} \ar[r]_{J_3^\alpha|_{X^\alpha_3}} & \cdots }
\end{displaymath}

The basic property guaranteed by the above requirements
is isolated in the following fact (the proof is
straightforward).
\begin{fact}
\label{4f9} Let $\fcode$ be a $\lambda$-coherent
tree-code and let $\ccc$ be its body. Let also
$\alpha\in\ccc$. Then the inductive limit of the
system of embeddings $(Y^\alpha_n,J_n^\alpha|_{Y^\alpha_n})$
is isometric to the space $\llll_\lambda[X^\alpha_0]$.
\end{fact}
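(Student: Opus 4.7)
The plan is to exploit the commutative diagram in the definition of $\lambda$-coherence to transport the inductive limit of $(Y^\alpha_n, J_n^\alpha|_{Y^\alpha_n})$ onto the distinguished subsystem of $(E^\alpha_n, j^\alpha_n)$ whose union closes up to $\llll_\lambda[X^\alpha_0]$. First, by Fact \ref{4f8}(ii), each restriction $J_n^\alpha|_{Y^\alpha_n}:Y^\alpha_n \to Y^\alpha_{n+1}$ is a well-defined linear isometric embedding, so $(Y^\alpha_n, J_n^\alpha|_{Y^\alpha_n})$ is a system of isometric embeddings in the sense of \S 4.1. Next, I would show that $j_n^\alpha$ carries $G^\alpha_n$ into $G^\alpha_{n+1}$: using $G^\alpha_n = T_n^\alpha(Y^\alpha_n)$, the commutation relation $j_n^\alpha \circ T_n^\alpha = T_{n+1}^\alpha \circ J_n^\alpha|_{X^\alpha_n}$ read off the diagram, and the inclusion $J_n^\alpha(Y^\alpha_n) \subseteq Y^\alpha_{n+1}$ from Fact \ref{4f8}(ii), one computes
\[ j_n^\alpha(G^\alpha_n) = j_n^\alpha\big(T_n^\alpha(Y^\alpha_n)\big) = T_{n+1}^\alpha\big(J_n^\alpha(Y^\alpha_n)\big) \subseteq T_{n+1}^\alpha(Y^\alpha_{n+1}) = G^\alpha_{n+1}. \]
Thus $(G^\alpha_n, j_n^\alpha|_{G^\alpha_n})$ is itself a system of isometric embeddings, and the restrictions $T_n^\alpha|_{Y^\alpha_n}:Y^\alpha_n \to G^\alpha_n$ form a compatible family of surjective isometries intertwining the two systems of connecting maps.

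By the minimality/uniqueness property of inductive limits recalled in \S 4.1 (equivalently, by functoriality of the construction), such a compatible family of isometries extends canonically to an isometric isomorphism between the two inductive limits. It therefore suffices to identify the inductive limit of $(G^\alpha_n, j_n^\alpha|_{G^\alpha_n})$ with $\llll_\lambda[X^\alpha_0]$. To do this, I would use the standard identification described in \S 4.1: passing to the inductive limit $Z$ of $(E^\alpha_n, j^\alpha_n)$, each $E^\alpha_n$ is embedded isometrically into $Z$ via a map $\mathcal{J}_n$ with $\mathcal{J}_{n+1}\circ j^\alpha_n = \mathcal{J}_n$, so $(\mathcal{J}_n(G^\alpha_n))$ becomes an increasing sequence of finite-dimensional subspaces of $Z$, and the inductive limit of $(G^\alpha_n, j^\alpha_n|_{G^\alpha_n})$ is canonically isometric to the closure of $\bigcup_n \mathcal{J}_n(G^\alpha_n)$ in $Z$. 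By the construction recalled in \S 4.2, this closure is precisely the definition of $\llll_\lambda[X^\alpha_0]$.

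I do not anticipate a genuine obstacle here: the entire content of the fact is bookkeeping, and the notion of $\lambda$-coherence has been engineered so that the identification is automatic once the diagram and the condition $G^\alpha_n = T_n^\alpha(Y^\alpha_n)$ are written down. The only step that requires any care is the verification that $(G^\alpha_n, j^\alpha_n|_{G^\alpha_n})$ is a legitimate subsystem (i.e.\ that $j^\alpha_n$ restricts correctly), which is precisely the short computation displayed above.
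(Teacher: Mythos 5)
Your argument is correct, and since the paper simply declares the proof ``straightforward'' and omits it, what you have written is exactly the bookkeeping the author had in mind. The key points are all present: the restricted maps $J_n^\alpha|_{Y^\alpha_n}$ form a system of isometric embeddings by Fact \ref{4f8}(ii); the commutation $j_n^\alpha\circ T_n^\alpha = T_{n+1}^\alpha\circ J_n^\alpha|_{X^\alpha_n}$ together with $G_n^\alpha = T_n^\alpha(Y_n^\alpha)$ shows $j_n^\alpha(G_n^\alpha)\subseteq G_{n+1}^\alpha$, so $(G_n^\alpha, j_n^\alpha|_{G_n^\alpha})$ is a subsystem of $(E_n^\alpha, j_n^\alpha)$; the restrictions $T_n^\alpha|_{Y_n^\alpha}$ give a compatible family of surjective isometries between the two systems, hence their inductive limits are isometric; and the inductive limit of $(G_n^\alpha, j_n^\alpha|_{G_n^\alpha})$ identifies with the closure of $\bigcup_n G_n^\alpha$ inside the inductive limit $Z$ of $(E_n^\alpha, j_n^\alpha)$, which is precisely the definition of $\llll_\lambda[X^\alpha_0]$ given in \S 4.2. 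No gaps.
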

We are ready to state the main technical step
towards the proof of Theorem \ref{4t2}.
\begin{lem}
\label{4l10} There exists a $\lambda$-coherent
tree-code $\fcode$.
\end{lem}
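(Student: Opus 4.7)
The plan is to build the tree-code by recursion on $k$, with $(C_1,\phi_1)$ already given. Suppose inductively that $(C_1,\phi_1),\dots,(C_k,\phi_k)$ have been constructed so that the tree condition $C_j=\{t|j:t\in C_k\}$ holds for every $j\le k$, and so that for every $t\in C_k$ there exist isometries $T^t_i\colon X^t_i\to E^t_i$ for $i<k$ (with $T^t_0=\mathrm{Id}$) identifying the first $k$ coded spaces with the first $k$ stages $(E^t_i,G^t_i)_{i<k}$ of the Bourgain-Pisier construction applied to $X^t_0$ with filtration $F_n(X^t_0)=\ospan\{d_i(X^t_0):i\le n\}$ and parameters $\lambda,\eta,\ee$, commuting with the coded embeddings $\phi_2^t,\dots,\phi_k^t$. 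The inductive step will associate to every $t\in C_k$ a Borel-in-$t$ extension $t'\in\wb^{k+1}$ with $t'|k=t$, and $C_{k+1}$ is defined as the set of all such extensions; the tree property and conditions (C1)--(C2) then hold by construction.

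To build $t'$, I mimic \S 4.2 uniformly in $t$. For $k\ge 2$, I compute, in a Borel manner, the finite-dimensional subspace $H^t\subseteq X^t_{k-1}$ spanned by $Y^t_{k-1}$ together with the pushforwards of $d_0(X^t_0),\dots,d_{k-1}(X^t_0)$ through the composition $\phi_k^t\circ\cdots\circ\phi_2^t$ (for $k=1$ I take $H^t=\sspan\{d_0(X^t_0)\}$). Fact~\ref{4f7} guarantees the existence of an integer $m$, a subspace $S\subseteq\ell^m_\infty$, and an isomorphism $U\colon S\to H^t$ with $\|s\|\le\|U(s)\|\le(1+\ee)\|s\|$; by Kuratowski--Ryll-Nardzewski I Borel-select the least such $m^t$ and an admissible pair $(S^t,U^t)$. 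Setting $u^t=\lambda^{-1}U^t$, Definition~\ref{4d3} produces the canonical triple $(Z^t,j^t,\tilde{u}^t)$ associated to $(\ell^{m^t}_\infty,S^t,u^t,X^t_{k-1},\eta)$; by Proposition~\ref{4p4} and Remark~\ref{4r6}, $j^t$ is an isometric embedding and $\tilde{u}^t(\ell^{m^t}_\infty)$ has Banach-Mazur distance at most $\lambda$ from $\ell^{m^t}_\infty$, matching the next stage of the tower.

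To finish the step I realize $Z^t$ inside $C(2^\nn)$ by an isometry $\iota^t$ that extends the coded embedding of $X^t_{k-1}$, Borel in $t$. Since $Z^t=(\ell^{m^t}_\infty\oplus_1 X^t_{k-1})/N^t$ for the explicit finite-dimensional subspace $N^t=\{(s,-u^t(s)):s\in S^t\}$, its norm and a canonical countable dense subset are computable from $t$; a measurable-selection version of the universality of $C(2^\nn)$---obtained by Hahn--Banach extending a Borel-selected countable norming family of functionals on $Z^t$ that agrees, on $X^t_{k-1}$, with the functionals witnessing $X^t_{k-1}\subseteq C(2^\nn)$---then delivers $\iota^t$. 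I set $X^{t'}_k=\iota^t(Z^t)$, $Y^{t'}_k=\iota^t\big(\tilde{u}^t(\ell^{m^t}_\infty)\big)$, $\phi^{t'}=\iota^t\circ j^t$, and read off dense sequences via Proposition~\ref{2p4}. The coherence isometry $T^{t'}_k$ extending the inductive hypothesis is furnished by the uniqueness clause recalled in \S 4.1, transported along $T^t_{k-1}$.

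The main obstacle is the Borel-in-$t$ extension of the isometric embedding into $C(2^\nn)$ in the previous paragraph: everything else is a direct transcription of the Bourgain-Pisier inductive step into the Effros-Borel setting. Once this uniformity is in place, applying the construction along all coordinates of $\alpha\in\ccc$ yields, via Fact~\ref{4f8}, the full system of embeddings $J^\alpha_n|_{X^\alpha_n}$, and Fact~\ref{4f9} then identifies the inductive limit with $\llll_\lambda[X^\alpha_0]$, confirming $\lambda$-coherence.
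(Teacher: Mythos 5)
Your proposal follows the same recursive scheme on the length $k$, and the computation of the inductive-step data (the subspace $H^t$, the choice of $m^t$, the Kisliakov push-out $Z^t$, the use of Fact~\ref{4f7} and Proposition~\ref{4p4}) matches the paper. But you diverge at the crucial step, and the divergence creates a genuine gap. You aim to \emph{Borel-select}, for each $t\in C_k$, a single extension $t'\in\wb^{k+1}$ by producing an isometry $\iota^t\colon Z^t\to C(2^{\mathbb{N}})$ which \emph{extends the inclusion} $X^t_{k-1}\subseteq C(2^{\mathbb{N}})$. Two problems. First, such an extension need not exist: the inclusion $X^t_{k-1}\subseteq C(2^{\mathbb{N}})$ can be too tight to leave room for $Z^t\supsetneq X^t_{k-1}$ (for instance, nothing in the definition of a code prevents $X^t_{k-1}$ from being very large inside $C(2^{\mathbb{N}})$), and the code framework set up in \S 4.3 deliberately does \emph{not} require $\phi^t$ to be an inclusion --- condition (C2) only asks for an isometric embedding $X^t_{k-2}\to X^t_{k-1}$. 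Second, even when an extension exists, your Hahn--Banach sketch does not produce one: a Hahn--Banach extension of the evaluation functionals restricted to $X^t_{k-1}$ is norm-one on $Z^t$ but has no reason to be \emph{norming} on $Z^t\setminus X^t_{k-1}$, so the induced map need not be isometric; and you would additionally need a $w^*$-continuous-in-$\sigma$ and Borel-in-$t$ choice of extension, which no invoked theorem supplies.

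The paper avoids all of this by \emph{not selecting} a single extension. It fixes, once and for all, evaluation operators $v_d\colon C(2^{\mathbb{N}})\to\ell^d_\infty$ at a dense sequence of points, Borel-partitions $C_k$ into sets $P_d$ according to the least $d$ for which $v_d$ embeds $H_k(t)$ almost isometrically, and then defines $C_d$ as the set of \emph{all} codes $t'\in\wb^{k+1}$ extending $t\in P_d$ whose dense sequence $(x_n)$ has exactly the norms of the canonical dense sequence $(z_n)$ of $Z_t$ (computed Borel-in-$t$ via the quotient-norm map $Q$). This set is Borel by inspection --- it is given by a countable conjunction of equalities of Borel real-valued functions --- and the embedding $\phi_{k+1}(t',\cdot)$ is read off from $t'$ itself (as the extension of $d_n(X^t_{k-1})\mapsto x_{\langle n,0\rangle}$). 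The non-emptiness of the fibers of $C_d$ over $P_d$ is all that is needed for the tree-code property, and that is immediate from the isometric universality of $C(2^{\mathbb{N}})$. So the paper trades your Borel uniformization problem for the much softer fact that a Borel \emph{relation} (\, ``$(x_n)$ realizes the norm of $Z_t$'' \,) is Borel; this is what makes the argument go through.
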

Granting Lemma \ref{4l10}, the proof is completed
as follows. Let $\fcode$ be the $\lambda$-coherent
tree-code obtained above. Denote by $\ccc$ its body.
By Fact \ref{4f9}, we have
\begin{eqnarray*}
(X,Y)\in\llll_\lambda & \Leftrightarrow &
\exists \alpha\in \wb^\nn \text{ with } \alpha\in\ccc,
\ X=X^\alpha_0 \text{ and such that } Y \text{ is isometric}\\
& & \text{to the inductive limit of the system }
(Y^\alpha_n,J_n^\alpha|_{Y^\alpha_n}).
\end{eqnarray*}
Let $\alpha\in\ccc$. There is a canonical dense sequence
in the inductive limit $Z^\alpha$ of the system
$(Y^\alpha_n,J_n^\alpha|_{Y^\alpha_n})$. Indeed,
by the discussion in \S 4.1, the sequence of spaces
$(Y^\alpha_n)$ can be identified with an increasing
sequence of subspaces of $Z^\alpha$. Under this point
of view, the sequence $\big(d_m(Y^\alpha_n)\big)$ $(n,m\in\nn)$
is a dense sequence in $Z^\alpha$.
Let $\{(n_i,m_i):i\in\nn\}$ be an enumeration of the set
$\nn\times\nn$ such that $\max\{n_i,m_i\}\leq i$
for every $i\in\nn$. It follows that
\begin{eqnarray*}
(X,Y)\in\llll_\lambda & \Leftrightarrow &
\exists (y_i)\in C(2^\nn)^\nn \ \exists \alpha\in \wb^\nn
\text{ with } \alpha\in\ccc, \ X=X^\alpha_0 \text{ and} \\
& & Y =\overline{\mathrm{span}}\{y_i:i\in\nn\} \text{ and }
\forall l\in\nn \ \forall b_0,...,b_l\in\mathbb{Q} \\
& & \big\| \sum_{i=0}^l b_i y_i\big\| =
\big\| \sum_{i=0}^l b_i \Phi_{n_i,l+1}\big(\alpha,
d_{m_i}(Y^\alpha_{n_i})\big)\big\|.
\end{eqnarray*}
Invoking Fact \ref{4f8}(i), we get that the above
formula gives an analytic definition of
the set $\llll_\lambda$, as desired.

So, it remains to prove Lemma \ref{4l10}. To this end, we
need the following easy fact (the proof is left to the
interested reader).
\begin{fact}
\label{4f11} Let $S$ be a standard Borel space, $X$ be a Polish
space and $f_n:S\to X$ $(n\in\nn)$ be a sequence of Borel maps.
Then, the map $F:S\to F(X)$, defined by $F(s)=\overline{\{f_n(s):
n\in\nn\}}$ for every $s\in S$, is Borel.
\end{fact}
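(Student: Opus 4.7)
The plan is to verify the Borel measurability of $F$ directly against the generators of the Effros-Borel $\sigma$-algebra on $F(X)$, exploiting the elementary fact that the closure of a set meets an open set if and only if the set itself does.

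First, I would recall that the Effros-Borel structure on $F(X)$ is, by definition, the $\sigma$-algebra generated by the family $\big\{\{C\in F(X):C\cap U\neq\varnothing\}:U\subseteq X\text{ open}\big\}$. To show that $F:S\to F(X)$ is Borel, it therefore suffices to verify that for every open $U\subseteq X$ the preimage $F^{-1}\big(\{C\in F(X):C\cap U\neq\varnothing\}\big)=\{s\in S:F(s)\cap U\neq\varnothing\}$ is a Borel subset of $S$.

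Next, I would make the elementary topological observation that for any subset $A\subseteq X$ and any open set $U\subseteq X$, one has $\overline{A}\cap U\neq\varnothing$ if and only if $A\cap U\neq\varnothing$; the nontrivial direction uses that any point of $\overline{A}\cap U$ has $U$ as an open neighborhood, which must therefore meet $A$. Applying this with $A=\{f_n(s):n\in\nn\}$, we obtain
\[
\{s\in S:F(s)\cap U\neq\varnothing\}=\{s\in S:\exists n\in\nn,\ f_n(s)\in U\}=\bigcup_{n\in\nn}f_n^{-1}(U).
\]
Since each $f_n$ is a Borel map and $U$ is open in the Polish space $X$, each set $f_n^{-1}(U)$ is Borel in $S$, and hence so is the countable union on the right. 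This shows that $F$ pulls back generators of the Effros-Borel $\sigma$-algebra to Borel sets in $S$, and therefore $F$ is Borel.

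There is no real obstacle in this proof; the only point requiring a moment's care is keeping track of why it suffices to test against the generating subfamily $\{C\cap U\neq\varnothing\}$ rather than all Effros-Borel sets, which is standard and is exactly the fact that the preimage map $F^{-1}$ commutes with $\sigma$-algebra operations.
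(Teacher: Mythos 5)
Your proof is correct, and the paper in fact omits it entirely (``the proof is left to the interested reader''); your argument is exactly the expected one: reduce to the generators of the Effros--Borel $\sigma$-algebra, observe that $\overline{A}\cap U\neq\varnothing$ iff $A\cap U\neq\varnothing$ for $U$ open, and express the preimage as the countable union $\bigcup_n f_n^{-1}(U)$.
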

We are ready to proceed to the proof of Lemma \ref{4l10}.
\begin{proof}[Proof of Lemma \ref{4l10}]
The $\lambda$-coherent tree-code $\fcode$ will be constructed
by recursion. For $k=1$ we let $(C_1,\phi_1)$ to be the code of
length $1$ defined in \textbf{(A)} above. Assume that for some
$k\geq 1$ and every $l\leq k$ we have constructed the code
$(C_l,\phi_l)$ of length $l$. We will construct the code
$(C_{k+1},\phi_{k+1})$ of length $k+1$.

First, we define recursively a family of Borel functions
$f_l:C_k\times C(2^\nn)\to C(2^\nn)$ $(1\leq l\leq k)$
by the rule $f_1(t,x)=x$ and $f_{l+1}(t,x)=\phi_{l+1}
\big(t|l+1,f_l(t,x)\big)$. Notice that
$f_k^t(X^t_0)\subseteq X^t_{k-1}$ for every $t\in C_k$.
Let also $F_{k-1}:\sbs\to \sbs$ and $H_k:C_k\to \sbs$ be
defined by $F_{k-1}(X)=\overline{\mathrm{span}}\{d_i(X):i\leq k-1\}$
and
\[ H_k(t)=\overline{\mathrm{span}}\big\{ Y^t_{k-1}
\cup f_k^t\big(F_{k-1}(X^t_0)\big)\big\}\]
respectively. Observe that for every $X\in\sbs$
and every $t\in C_k$ the spaces $F_{k-1}(X)$ and
$H_k(t)$ are both finite-dimensional subspaces of
$X$ and $X^t_{k-1}$ respectively.
\begin{claim}
\label{4c12} The maps $F_{k-1}$ and $H_k$ are Borel.
\end{claim}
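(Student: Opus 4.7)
The plan is to reduce both assertions to Fact \ref{4f11} by exhibiting, for each target subspace, a countable family of Borel maps into $C(2^\nn)$ whose pointwise closure recovers that subspace.

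For $F_{k-1}$, the Borel selectors $d_0,\dots,d_{k-1}:\sbs\to C(2^\nn)$ supplied by Proposition \ref{2p4} are Borel, so for every rational tuple $\bar q=(q_0,\dots,q_{k-1})\in\qq^{k-1}$ the finite combination $X\mapsto\sum_{i=0}^{k-1}q_i d_i(X)$ is Borel. These countably many combinations are norm-dense in $F_{k-1}(X)=\ospan\{d_i(X):i\leq k-1\}$, so Fact \ref{4f11} yields that $F_{k-1}:\sbs\to F(C(2^\nn))$ is Borel; since its values are closed linear subspaces and $\sbs$ is Borel in $F(C(2^\nn))$, this gives $F_{k-1}:\sbs\to\sbs$ Borel.

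For $H_k$ I would build an explicit countable Borel family whose closure is $H_k(t)$. The coordinate projection $t\mapsto t(k-1)$ is continuous and $p_1$ is continuous, so $t\mapsto Y^t_{k-1}$ is Borel; by Proposition \ref{2p4}, each $t\mapsto d_n(Y^t_{k-1})$ is then a Borel map $C_k\to C(2^\nn)$ and the family is norm-dense in $Y^t_{k-1}$ pointwise. Similarly $t\mapsto X^t_0$ is Borel, and composing with the previously established $F_{k-1}$ gives a Borel map $t\mapsto F_{k-1}(X^t_0)$; hence $t\mapsto d_n(F_{k-1}(X^t_0))$ is Borel, and since $f_k$ is Borel, so is $t\mapsto f_k\bigl(t,d_n(F_{k-1}(X^t_0))\bigr)$. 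Condition (C2) says that $f_k^t$ is a linear isometric embedding, so these form a dense sequence in $f_k^t(F_{k-1}(X^t_0))$. Enumerating all rational linear combinations of the combined family $\{d_n(Y^t_{k-1}):n\in\nn\}\cup\{f_k^t(d_n(F_{k-1}(X^t_0))):n\in\nn\}$ produces a countable collection of Borel maps $C_k\to C(2^\nn)$ whose pointwise closure is exactly $H_k(t)$; Fact \ref{4f11} then gives that $H_k$ is Borel.

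I do not expect any genuine obstacle: every ingredient is either a Borel selector from Proposition \ref{2p4}, a Borel datum already built into the tree-code, or Fact \ref{4f11}. The only point requiring mild care is verifying that the closure of rational combinations of dense sequences in the two summands $Y^t_{k-1}$ and $f_k^t(F_{k-1}(X^t_0))$ recovers the closed linear span of their union, which is immediate from the continuity of vector-space operations.
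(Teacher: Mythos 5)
Your proof is correct and follows essentially the same route as the paper's: for $F_{k-1}$ the paper likewise takes rational linear combinations $X\mapsto\sum_{i=0}^{k-1}s(i)d_i(X)$ for $s\in\qq^k$ and invokes Fact \ref{4f11}, and for $H_k$ the paper merely states the argument ``proceeds similarly,'' which is precisely what you spell out. (One trivial slip: the tuples should range over $\qq^k$, not $\qq^{k-1}$, since there are $k$ coefficients $q_0,\dots,q_{k-1}$.)
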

\noindent \textit{Proof of Claim \ref{4c12}.} For
every $s\in \qq^k$ consider the map
$f_s:\sbs\to C(2^\nn)$ defined by
$f_s(X)=\sum_{i=0}^{k-1} s(i)d_i(X)$.
Clearly $f_s$ is Borel. Notice
that $F_{k-1}(X)$ is equal to the closure of the
set $\{f_s(X):s\in \qq^k\}$. Invoking Fact
\ref{4f11}, the Borelness of the map
$F_{k-1}$ follows. The proof that $H_k$
is also Borel proceeds similarly. The claim
is proved. \hfill $\lozenge$
\medskip

We fix a dense sequence $(\sg_i)$ in $2^\nn$. For every
$d\in\nn$ with $d\geq k$ we define an operator
$v_d:C(2^\nn)\to \ell^d_\infty$ by
\[ v_d(f)=\big( f(\sg_0),...,f(\sg_{d-1})\big).\]
Notice that $\|v_d(f)\|\leq \|f\|$. Moreover,
observe that the map $C(2^\nn)\ni f \mapsto \|v_d(f)\|$
is continuous. For every $d\geq k$ let
$B_d$ be the subset of $C_k$ defined by
\begin{eqnarray*}
t\in B_d & \Leftrightarrow & \forall f\in H_k(t) \text{ we have }
\|f\|\leq (1+\ee)\|v_d(f)\| \\
& \Leftrightarrow & \forall n\in\nn \text{ we have }
\|d_n(H_k(t))\|\leq (1+\ee) \|v_d\big(d_n(H_k(t))\big)\|.
\end{eqnarray*}
By the above formula, we see that $B_d$ is Borel. Also observe
that $C_k=\bigcup_{d\geq k} B_d$. We define recursively
a partition $\{P_d:d\geq k\}$ of $C_k$ by the rule
$P_k=B_k$ and $P_{d+1}=B_{d+1}\setminus (P_k \cup ...\cup P_d)$.
Notice that $P_d$ is a Borel subset of $B_d$.

Let $d\geq k$ arbitrary. By $Z$ we shall denote the
vector space $C(2^\nn)\times \ell^d_\infty$ equipped with
the norm $\|(f,a)\|=\|f\|+\|a\|$ for every $f\in C(2^\nn)$
and every $a\in\ell^d_\infty$. Consider the map
$N:P_d\to \subs(Z)$ defined by
\[ N(t)=\big\{ (-f,\lambda v_d(f)): f\in H_k(t)\big\}. \]
Arguing as in Claim \ref{4c12} it is easy to see that $N$ is Borel.
Let $\mathbf{d}_m:\subs(Z)\to Z$ $(m\in\nn)$ be the sequence
of Borel maps obtained by Proposition \ref{2p4} applied
for $X=Z$. We recall that by $d_m:C(2^\nn)\to C(2^\nn)$
$(m\in\nn)$ we denote the corresponding sequence obtained
for $X=C(2^\nn)$. We may, and we will, assume that
$d_0(X)=0$ and $\mathbf{d}_0(Z')=0$ for every $X\in\sbs$
and every $Z'\in\subs(Z)$. We also fix a countable dense
subset $(r_m)$ of $\ell^d_\infty$ such that $r_0=0$.
Let $Q:P_d\times Z\to \rr$ be the map
\[ Q(t,z)=\inf\big\{ \|z+\mathbf{d}_m\big(N(t)\big)\|: m\in\nn\big\}. \]
Clearly $Q$ is Borel. We fix a bijection $\langle \cdot,
\cdot\rangle:\nn\times\nn\to\nn$. For every $n\in\nn$
by $m^0_n$ and $m^1_n$ we shall denote the unique integers
satisfying $n=\langle m^0_n,m^1_n\rangle$.
We define $C_d\subseteq \wb^{k+1}$ by
\begin{eqnarray*}
t'\in C_d & \Leftrightarrow & t'|k\in P_d \text{ and if }
t'(k)=\wlong \text{ and } t=t'|k, \\
& & \text{then } \forall i\in\nn \ \forall b_0,...,b_i\in\qq \text{ we have }\\
& & \big\| \sum_{n=0}^i b_n x_n \big\| =
Q\Big( t, \sum_{n=0}^i b_n \big(d_{m^0_n}(X^t_{k-1}),r_{m^1_n}\big)\Big)
\text{ and} \\
& & \forall n\in\nn \text{ we have } y_n=x_{\langle 0,n\rangle}.
\end{eqnarray*}
Clearly the above formula defines a Borel subset of $\wb^{k+1}$.
Also observe that $C_d\cap C_{d'}=\varnothing$ if $d\neq d'$.

Let us comment on some properties of the set $C_d$.
Fix $t'\in C_d$ and set $t=t'|k$. By definition, we have
$t\in P_d\subseteq B_d$. It follows that the operator
$v_d:H_k(t)\to \ell^d_\infty$ is an isomorphic embedding
satisfying $\|v_d(x)\|\leq \|x\|\leq (1+\ee)\|v_d(x)\|$
for every $x\in H_k(t)$. We set $S_t=v_d\big(H_k(t)\big)$
and we define $u:S_t\to X^t_{k-1}$ by
\[ u(s)=\frac{1}{\lambda}\big(v_d|_{H_k(t)}\big)^{-1}(s).\]
By the choice of $\ee$ and $\lambda$, we see that $\|u\|\leq \eta$.
Let $(Z_t,j,\tilde{u})$ be the canonical
triple associated to $(\ell^d_\infty, S_t, u, X^t_{k-1}, \eta)$.
There is a natural way to select a dense sequence in $Z_t$.
Indeed, as in Definition \ref{4d3}, consider the Banach space
$\ell^d_\infty\oplus_1\!X^t_{k-1}$ and let
$Q_t:\ell^d_\infty\oplus_1\!X^t_{k-1}\to Z_t$
be the natural quotient map. Setting
$z_n=Q_t\big((r_{m^1_n},d_{m^0_n}(X^t_{k-1}))\big)$
for every $n\in\nn$, we see that the sequence $(z_n)$
is a dense sequence in $Z_t$. Let $t'(k)=\big(X^{t'}_k,Y^{t'}_k,
(x_n),(y_n)\big)$. By the definition of the set $C_d$,
it follows that the map
\[ Z_t \ni z_n\mapsto x_n\in X^{t'}_k \]
can be extended to a linear isometry $T_{t'}:Z_t\to X^{t'}_k$.
In other words, we have the following
commutative diagram.
\begin{displaymath}
\xymatrix{ \ell^d_\infty \ar[r]^{\tilde{u}} & Z_t \ar[r]^{T_{t'}} & X^{t'}_k \\
S_t \ar[u]^{\mathrm{Id}} \ar[r]_{u} &
X^t_{k-1} \ar[u]^{j} & }
\end{displaymath}
It is clear from what we have said that the map
\[ X^t_{k-1}\ni d_n(X^t_{k-1}) \mapsto x_{\langle n,0\rangle} \in X^{t'}_k\]
can be also extended to linear isometric embedding
$J^{t'}:X^t_{k-1}\to X^{t'}_k$ satisfying $J^{t'}(Y^t_{k-1})
\subseteq Y^{t'}_k$. Moreover, it easy to see that this
extension can be done ``uniformly" in $t'$. Precisely,
there exists a Borel map $\phi_d:C_d\times C(2^\nn)\to C(2^\nn)$
such that for every $t'\in C_d$ we have $\phi_d(t',x)=J^{t'}(x)$
for every $x\in X^{t'|k}_{k-1}=X^{t'}_{k-1}$.

We are finally in position to construct the code $(C_{k+1},\phi_{k+1})$
of length $k+1$. First we set
\[ C_{k+1}=\bigcup_{d\geq k} C_d. \]
As we have already remark the sets $(C_d)_{d\geq k}$ are pairwise
disjoint. We define $\phi_{k+1}:C_{k+1}\times C(2^\nn)\to C(2^\nn)$
as follows. Let $(t',x)\in C_{k+1}\times C(2^\nn)$ and let
$d$ be the unique integer with $t'\in C_d$. We set
$\phi_{k+1}(t',x)=\phi_d(t',x)$. Clearly, the just defined
pair $(C_{k+1},\phi_{k+1})$ is a code of length $k+1$.

This completes the recursive construction of the family
$\fcode$. That the family $\fcode$ is a tree-code follows
immediately by the definition of the set $C_d$ above.
Moreover, as one can easily realize, the tree-code
$\fcode$ is in addition $\lambda$-coherent, as desired.
\end{proof}
As we have already indicated above, having completed
the proof of Lemma \ref{4l10} the proof of Theorem
\ref{4t2} is also completed.

\subsection{Consequences}

We start with the following.
\begin{cor}
\label{4c13} Let $A$ be an analytic subset of $\nun$.
Then there exists an analytic subset $A'$ of $\nun$
with the following properties.
\begin{enumerate}
\item[(i)] Every $Y\in A'$ has a Schauder basis.
\item[(ii)] For every $X\in A$ there exists $Y\in A'$
containing an isometric copy of $X$.
\end{enumerate}
\end{cor}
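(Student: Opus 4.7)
The plan is to apply the parameterized Bourgain--Pisier construction of Theorem \ref{4t2} uniformly to all members of $A$. Fix once and for all some $\lambda>1$ and define
\[ A'=\big\{ Y\in\sbs : \exists X\in A \text{ with } (X,Y)\in\llll_\lambda\big\},\]
i.e.\ the projection onto the second coordinate of the intersection of the analytic set $\llll_\lambda\subseteq\sbs\times\sbs$ (Theorem \ref{4t2}) with the analytic set $A\times\sbs$. Since the continuous image of an analytic set is analytic, $A'$ is analytic. Property (ii) is immediate from Theorem \ref{4t1}: for every $X\in A$ the space $\llll_\lambda[X]$ contains an isometric copy of $X$ and witnesses $(X,\llll_\lambda[X])\in\llll_\lambda$, hence belongs to $A'$.

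There are then two remaining things to verify: that every $Y\in A'$ has a Schauder basis, and that $A'\subseteq\nun$ (so that $A'$ is indeed a subset of $\nun$, as required). For the first, note that any $Y\in A'$ is isometric to some $\llll_\lambda[X]$, and Theorem \ref{4t1} ensures this is a separable $\llll_{\infty,\lambda+}$-space; the existence of a Schauder basis then follows from Theorem \ref{2t2} of Johnson--Rosenthal--Zippin. For the second, fix $Y\in A'$ isometric to $\llll_\lambda[X]$ for some $X\in A\subseteq\nun$. By Theorem \ref{4t1}, the quotient $Y/X$ has the Schur property, while $X$ is non-universal by hypothesis; Proposition \ref{3p1}(i) then lifts non-universality from $X$ to $Y$.

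There is no real obstacle left to surmount: the combinatorial and analytic work was carried out in Theorem \ref{4t2} (which produces the analytic Borel reduction implementing the construction uniformly), while the three-space-style preservation of non-universality was isolated in Proposition \ref{3p1}(i). The corollary simply packages these two ingredients together with Theorem \ref{2t2}.
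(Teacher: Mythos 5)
Your proof is correct and follows essentially the same route as the paper: apply the parameterized Bourgain--Pisier construction (Theorem \ref{4t2}) to $A$, take $A'$ to be the projection of $\llll_\lambda\cap(A\times\sbs)$ onto the second coordinate, and then invoke Theorem \ref{4t1} for property (ii), Theorem \ref{2t2} for property (i), and Proposition \ref{3p1}(i) to see $A'\subseteq\nun$. The only cosmetic difference is that the paper fixes $\lambda=2$ while you keep $\lambda>1$ arbitrary; your write-up also spells out the verification steps that the paper compresses into a single closing sentence.
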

\begin{proof}
Let $\llll_2$ be the analytic subset of $\sbs\times \sbs$
obtained by Theorem \ref{4t2} applied for $\lambda=2$.
We define $A'$ by
\[ Y\in A' \Leftrightarrow \exists X\in\sbs \text{ with } X\in A
\text{ and } (X,Y)\in \llll_2.\]
Clearly $A'$ is analytic. By Theorem \ref{2t2}, Proposition
\ref{3p1}(i) and Theorem \ref{4t1} the set $A'$ is as desired.
\end{proof}
Let $X$ be a non-universal separable Banach space and let
$\lambda>1$. By Theorem \ref{4t1} and Proposition \ref{3p1}(i),
the space $\llll_\lambda[X]$ is also non-universal. We have
the following quantitative refinement of this fact.
\begin{cor}
\label{4c14} Let $\lambda>1$. Then there exists a map
$f_\lambda:\omega_1\to\omega_1$ such that for every $\xi<\omega_1$
and every separable Banach space $X$ with $\phi_{\nun}(X)\leq\xi$
we have $\phi_{\nun}(\llll_\lambda[X])\leq f_\lambda(\xi)$.

In particular, there exists a map $f:\omega_1\to\omega_1$ such that
for every $\xi<\omega_1$, every separable Banach space $X$
with $\phi_{\nun}(X)\leq\xi$ embeds isometrically into a
Banach space $Y$ with a Schauder basis satisfying
$\phi_{\nun}(Y)\leq f(\xi)$.
\end{cor}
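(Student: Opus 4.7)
The plan is to combine the analyticity of the $\llll_\lambda$-relation from Theorem \ref{4t2} with the boundedness property of $\PB^1_1$-ranks from Lemma \ref{2l3}(ii), exactly in the spirit of Corollary \ref{4c13}.

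First I would fix $\xi<\omega_1$ and consider the set $B_\xi=\{X\in\nun:\phi_{\nun}(X)\leq\xi\}$. By Theorem \ref{2t5}(i) the map $\phi_{\nun}$ is a $\PB^1_1$-rank on $\nun$, so by Lemma \ref{2l3}(i) the set $B_\xi$ is Borel. Next, I would define
\[ A_\xi=\big\{Y\in\sbs: \exists X\in B_\xi \text{ with } (X,Y)\in\llll_\lambda\big\}.\]
Since $B_\xi$ is Borel and $\llll_\lambda$ is analytic by Theorem \ref{4t2}, the set $A_\xi$ is the projection of an analytic subset of $\sbs\times\sbs$, and is therefore analytic.

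The key observation is that $A_\xi\subseteq\nun$. Indeed, if $Y\in A_\xi$ then $Y$ is isometric to $\llll_\lambda[X]$ for some non-universal $X$, and by Theorem \ref{4t1} the quotient $\llll_\lambda[X]/X$ has the Schur property; hence Proposition \ref{3p1}(i) yields that $Y$ itself is non-universal. Having placed the analytic set $A_\xi$ inside the co-analytic set $\nun$ on which $\phi_{\nun}$ is a $\PB^1_1$-rank, Lemma \ref{2l3}(ii) (boundedness) applies and gives
\[ f_\lambda(\xi):=\sup\{\phi_{\nun}(Y):Y\in A_\xi\}<\omega_1. \]
Any $X$ with $\phi_{\nun}(X)\leq\xi$ satisfies $\llll_\lambda[X]\in A_\xi$, so $\phi_{\nun}(\llll_\lambda[X])\leq f_\lambda(\xi)$, proving the first assertion.

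For the second assertion, fix any $\lambda>1$ (say $\lambda=2$) and set $f=f_\lambda$. Given $X$ with $\phi_{\nun}(X)\leq\xi$, Theorem \ref{4t1} says $\llll_\lambda[X]$ is a separable $\llll_\infty$-space containing $X$ isometrically, and by Theorem \ref{2t2} it has a Schauder basis. The bound $\phi_{\nun}(\llll_\lambda[X])\leq f(\xi)$ was just established. The main (and really only) point requiring care is that the projection defining $A_\xi$ lands inside $\nun$; this is exactly where Proposition \ref{3p1}(i) together with the Schur property of the Bourgain--Pisier quotient is used, and it is what makes the boundedness lemma applicable to produce the countable ordinal $f_\lambda(\xi)$.
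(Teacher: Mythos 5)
Your proof is correct and follows essentially the same route as the paper: fix $\xi$, take the Borel set of $X$ with $\phi_{\nun}(X)\leq\xi$, project through the analytic relation $\llll_\lambda$ to get an analytic subset of $\nun$ (using Theorem \ref{4t1} and Proposition \ref{3p1}(i) for non-universality), and apply $\PB^1_1$-rank boundedness. The only cosmetic difference is that you swap the names $A_\xi$ and $B_\xi$ relative to the paper's proof.
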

\begin{proof}
By Theorem \ref{2t5}(i), we know that the map $\phi_{\nun}$
is a $\PB^1_1$-rank on $\nun$. Let $\xi<\omega_1$ arbitrary
and set
\[ A_\xi=\big\{X\in\nun: \phi_{\nun}(X)\leq\xi\big\}.\]
By Lemma \ref{2l3}(i), the set $A_\xi$ is analytic (in fact Borel).
Let $\llll_\lambda$ be the analytic subset of $\sbs\times\sbs$
obtained by Theorem \ref{4t2} applied for the given $\lambda$.
We define $B_\xi$ by
\[ Y\in B_\xi \Leftrightarrow \exists X\in\sbs \text{ with } X\in A_\xi
\text{ and } (X,Y)\in \llll_\lambda.\]
As in Corollary \ref{4c13}, we see that $B_\xi$ is an analytic
subset of $\nun$. By Lemma \ref{2l3}(ii), we have that
$\sup\{\phi_{\nun}(Y):Y\in B_\xi\}<\omega_1$. We set
\[ f_\lambda(\xi)=\sup\big\{ \phi_{\nun}(Y):Y\in B_\xi\big\}.\]
Clearly the map $f_\lambda$ is as desired.
\end{proof}
We close this subsection by presenting the following
analogues of Corollaries \ref{4c13} and \ref{4c14}
for the class $\ncon_Z$. They are both derived using
identical arguments as above.
\begin{cor}
\label{4c15} Let $Z$ be a minimal Banach space
not containing $\ell_1$. Let also $A$ be an analytic
subset of $\ncon_Z$. Then there exists an analytic
subset $A'$ of $\ncon_Z$ with the following properties.
\begin{enumerate}
\item[(i)] Every $Y\in A'$ has a Schauder basis.
\item[(ii)] For every $X\in A$ there exists $Y\in A'$
containing an isometric copy of $X$.
\end{enumerate}
\end{cor}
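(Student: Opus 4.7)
The plan is to mimic the proof of Corollary \ref{4c13} verbatim, replacing the appeal to Proposition \ref{3p1}(i) by an appeal to Proposition \ref{3p1}(ii). Fix $\lambda>1$ (any value works; one may take $\lambda=2$ for definiteness) and let $\llll_\lambda\subseteq\sbs\times\sbs$ be the analytic set provided by Theorem \ref{4t2}. Define
\[ A'=\big\{Y\in\sbs: \exists X\in\sbs \text{ with } X\in A \text{ and } (X,Y)\in\llll_\lambda\big\}. \]
Since $A$ is analytic and $\llll_\lambda$ is analytic, the set $A'$ is the projection of an analytic set, hence analytic.

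Next I would verify the three required properties. Every $Y\in A'$ is isometric to $\llll_\lambda[X]$ for some $X\in A$, so by the Bourgain--Pisier construction $Y$ is a separable $\llll_{\infty,\lambda+}$-space. Theorem \ref{2t2} then gives that $Y$ has a Schauder basis, which settles (i). Property (ii) is immediate since by Theorem \ref{4t1} the space $\llll_\lambda[X]$ contains an isometric copy of $X$.

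The one point that requires a different ingredient than in Corollary \ref{4c13} is showing $A'\subseteq\ncon_Z$. For $Y\in A'$, pick $X\in A$ with $Y$ isometric to $\llll_\lambda[X]$ and identify $X$ with its isometric copy inside $Y$. By Theorem \ref{4t1} the quotient $Y/X$ has the Schur property. Since $X\in A\subseteq \ncon_Z$, the space $X$ does not contain an isomorphic copy of $Z$. The hypotheses on $Z$ (minimal, not containing $\ell_1$) are precisely those needed to invoke Proposition \ref{3p1}(ii), which yields that $Y$ does not contain an isomorphic copy of $Z$ either; that is, $Y\in\ncon_Z$.

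There is no real obstacle here beyond checking that the template of Corollary \ref{4c13} transfers: the parameterized construction from Theorem \ref{4t2} does the Borel bookkeeping, Theorem \ref{2t2} supplies the Schauder basis, and the three-space-type stability result Proposition \ref{3p1}(ii) is the exact analogue for $\ncon_Z$ of the Lindenstrauss--Pe\l czy\'nski-style result used for $\nun$. The only conceptual wrinkle, which is not really a difficulty, is verifying that the minimality and $\ell_1$-avoidance assumptions on $Z$ are genuinely used in Proposition \ref{3p1}(ii); they are, precisely to force strict singularity of the quotient map on copies of $Z$ and then to pass to a further subspace isomorphic to $Z$.
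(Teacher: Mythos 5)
Your proof is correct and is essentially identical to the paper's argument: the paper explicitly states that Corollary \ref{4c15} is ``derived using identical arguments'' as Corollary \ref{4c13}, with Proposition \ref{3p1}(ii) taking the place of Proposition \ref{3p1}(i). You have correctly identified this substitution and verified that the minimality and $\ell_1$-avoidance hypotheses on $Z$ are exactly what Proposition \ref{3p1}(ii) requires.
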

\begin{cor}
\label{4c16} Let $Z$ be a minimal Banach space
with a Schauder basis and not containing $\ell_1$.
Let also $\lambda>1$. Then there exists a map
$f^Z_\lambda:\omega_1\to\omega_1$ such that for
every $\xi<\omega_1$ and every separable Banach
space $X$ with $\phi_{\ncon_Z}(X)\leq\xi$
we have $\phi_{\ncon_Z}(\llll_\lambda[X])\leq
f^Z_\lambda(\xi)$.

In particular, there exists a map $f_Z:\omega_1
\to\omega_1$ such that for every $\xi<\omega_1$,
every separable Banach space $X$ with
$\phi_{\ncon_Z}(X)\leq\xi$ embeds isometrically into
a Banach space $Y$ with a Schauder basis satisfying
$\phi_{\ncon_Z}(Y)\leq f_Z(\xi)$.
\end{cor}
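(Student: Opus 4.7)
The plan is to run the argument of Corollary \ref{4c14} essentially verbatim, substituting the rank $\phi_{\ncon_Z}$ for $\phi_{\nun}$, and relying on the stability property of $\ncon_Z$ under the Bourgain--Pisier construction supplied by Proposition \ref{3p1}(ii). More precisely, I would fix $\xi<\omega_1$ and set
\[ A_\xi=\big\{X\in\ncon_Z:\phi_{\ncon_Z}(X)\leq\xi\big\}. \]
By Theorem \ref{2t5}(ii) the map $\phi_{\ncon_Z}$ is a $\PB^1_1$-rank on $\ncon_Z$, so Lemma \ref{2l3}(i) gives that $A_\xi$ is Borel. Let $\llll_\lambda\subseteq\sbs\times\sbs$ be the analytic set produced by Theorem \ref{4t2} for the given $\lambda>1$, and define
\[ B_\xi=\big\{Y\in\sbs: \exists X\in A_\xi\text{ with } (X,Y)\in\llll_\lambda\big\}. \]
Then $B_\xi$ is analytic as the projection of $\llll_\lambda\cap(A_\xi\times\sbs)$.

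The main point is to check that $B_\xi\subseteq\ncon_Z$. If $Y\in B_\xi$, then $Y$ is isometric to $\llll_\lambda[X]$ for some $X\in A_\xi\subseteq\ncon_Z$. By Theorem \ref{4t1} the space $X$ embeds isometrically in $\llll_\lambda[X]$ and the quotient $\llll_\lambda[X]/X$ has the Schur property. Since $Z$ is minimal and does not contain $\ell_1$, Proposition \ref{3p1}(ii) applies and gives that $\llll_\lambda[X]$ does not contain an isomorphic copy of $Z$, i.e. $Y\in\ncon_Z$. The essential ingredient here---and the only place where the hypotheses on $Z$ enter---is precisely this three-space stability result; once it is in place, the argument is identical to that of Corollary \ref{4c14}. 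Applying the boundedness part of Lemma \ref{2l3}(ii) to the analytic set $B_\xi\subseteq\ncon_Z$, we obtain $\sup\{\phi_{\ncon_Z}(Y):Y\in B_\xi\}<\omega_1$, and we define
\[ f^Z_\lambda(\xi)=\sup\big\{\phi_{\ncon_Z}(Y):Y\in B_\xi\big\}. \]
For any $X$ with $\phi_{\ncon_Z}(X)\leq\xi$ we have $X\in A_\xi$ and hence $\llll_\lambda[X]\in B_\xi$, which yields the required bound $\phi_{\ncon_Z}(\llll_\lambda[X])\leq f^Z_\lambda(\xi)$.

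For the ``In particular'' statement, $\llll_\lambda[X]$ is by construction a separable $\llll_\infty$-space, and hence by Theorem \ref{2t2} it has a Schauder basis; also $X$ embeds isometrically in $\llll_\lambda[X]$ by Theorem \ref{4t1}. Thus, fixing once and for all some $\lambda>1$ and setting $f_Z=f^Z_\lambda$ produces the desired map. The only conceptual obstacle is confirming that the Bourgain--Pisier envelope preserves the property of not containing $Z$, which is handled exactly by Proposition \ref{3p1}(ii); the remaining descriptive-set-theoretic bookkeeping is a direct transcription of the proof of Corollary \ref{4c14}.
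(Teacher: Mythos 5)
Your proof is correct and follows exactly the route the paper intends: the paper itself only remarks that Corollaries \ref{4c15} and \ref{4c16} ``are both derived using identical arguments as above,'' i.e.\ by running the proof of Corollary \ref{4c14} with $\phi_{\ncon_Z}$ in place of $\phi_{\nun}$ and invoking Proposition \ref{3p1}(ii) (via Theorem \ref{4t1}) rather than Proposition \ref{3p1}(i) to keep $B_\xi$ inside $\ncon_Z$. You have correctly identified that this stability result is the only point at which the hypotheses on $Z$ (minimal, $\ell_1$-free) enter, and the remaining boundedness argument via Theorem \ref{2t5}(ii) and Lemma \ref{2l3} is a verbatim transcription.
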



\section{The strong boundedness of the class of non-universal spaces
with a Schauder basis}

In \cite{AD} it was shown that the class of non-universal spaces
with a Schauder basis is strongly bounded. More precisely,
the following was proved.
\begin{thm}[\cite{AD}, Proposition 83]
\label{5t1} Let $A$ be an analytic subset of $\sbs$ such that
every $Y\in A$ is non-universal and has a Schauder basis. Then
there exists a non-universal space $Z$, with a Schauder basis,
that contains every $Y\in A$ as a complemented subspace.
\end{thm}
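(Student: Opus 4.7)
The plan rests on combining the analytic boundedness of Bourgain's index $\phi_{\nun}$ with a tree-based amalgamation that respects Schauder bases. Since $A\subseteq\nun$ is analytic and $\phi_{\nun}$ is a $\PB^1_1$-rank on $\nun$ by Theorem \ref{2t5}(i), boundedness (Lemma \ref{2l3}(ii)) yields an ordinal $\xi_0<\omega_1$ with $\phi_{\nun}(Y)\leq\xi_0$ for every $Y\in A$. At the same time, the relation ``$(y_n)_n$ is a normalized Schauder basis of $Y$ with basis constant at most $K$'' is Borel in $(Y,(y_n)_n,K)\in\sbs\times C(2^\nn)^\nn\times\nn$, so a standard Jankov--von Neumann uniformization, combined with Proposition \ref{2p4}, produces an analytic parameterization $Y\mapsto (y_n^Y)_n$ of bases for the spaces in $A$, with Borel basis constants.

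Next, I would encode these parameterized bases into a single Schauder tree basis $\mathfrak{X}=(C(2^\nn),\nn,T,(x_t)_{t\in T})$: here $T$ is an analytic tree on $\nn$ whose infinite branches $\sigma\in[T]$ enumerate the pairs $(Y,(y_n^Y)_n)$ coming from $A$, and for each such $\sigma$ the sequence $(x_{\sigma|n})_n$ is a normalized basic sequence in $C(2^\nn)$ equivalent, with a constant uniformly bounded in $\sigma$, to $(y_n^Y)_n$. The uniform bound $\xi_0$ is then used to ensure, after a routine trimming, that $T$ and its natural derivatives have controlled descriptive complexity.

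One then forms the amalgamation $Z=\txtwo$, namely the $\ell_2$-Baire sum of $\mathfrak{X}$ along its branches. By construction, $Z$ carries a natural Schauder basis indexed by the nodes of $T$; each branch-subspace $X_\sigma=\ospan\{x_{\sigma|n}:n\in\nn\}$ is complemented in $Z$ via the canonical projection along $\sigma$ and is isomorphic to the corresponding $Y\in A$. Thus every member of $A$ embeds into $Z$ as a complemented subspace.

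The main obstacle is showing $Z$ is non-universal. Suppose, toward a contradiction, that $C(2^\nn)$ embeds isomorphically into $Z$; the goal is to localise such a copy along the tree. The expected route is a two-step reduction: first, invoke Rosenthal's fixing theorem (Theorem \ref{2t1}) applied to the composition of the embedding with partial sum projections, to argue that any purported copy of $C(2^\nn)$ must fix a rich family of basis intervals; second, use a Ramsey-type stabilization along $T$ to conclude either that some branch-subspace $X_\sigma$ already contains a copy of $C(2^\nn)$, contradicting $\phi_{\nun}(Y)\leq\xi_0$ for the corresponding $Y\in A$, or that a Bourgain tree witnessing the alleged embedding in $Z$ can be pulled back to a Bourgain tree witnessing universality inside a single branch. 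Controlling the interaction between the $\ell_2$-direct sum structure and Bourgain trees of height beyond $\xi_0$, thereby producing a Borel-definable $g:\omega_1\to\omega_1$ with $\phi_{\nun}(Z)\leq g(\xi_0)<\omega_1$, is the delicate combinatorial core of the argument.
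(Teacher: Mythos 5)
Your high-level scaffolding matches the paper's: encode the parameterized bases from the analytic set $A$ into a Schauder tree basis $\mathfrak{X}$, form the $\ell_2$ Baire sum $Z=\txtwo$, observe that the branch subspaces $\xxx_\sigma$ are $1$-complemented and recover (up to isomorphism) the members of $A$, then localize a putative copy of $C(2^\nn)$ onto some branch via Rosenthal's fixing theorem. But the step you identify as ``the delicate combinatorial core'' is where your sketch fails, and it fails because you are reaching for the wrong tool.

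The paper does not prove non-universality of $Z$ by bounding $\phi_{\nun}(Z)$ in terms of a uniform ordinal $\xi_0 = \sup_{Y\in A}\phi_{\nun}(Y)$, nor does it invoke any Ramsey-type stabilization along $T$ or any ``pull-back of Bourgain trees.'' In fact the ordinal $\xi_0$ plays no role at all in the proof of Theorem \ref{5t1}; your opening use of boundedness (Lemma \ref{2l3}(ii)) is a dead end here. The mechanism that actually makes the branch localization work is the structural result recorded as Theorem \ref{5t6}: every $X$-singular subspace of $\txtwo$ fails to contain $\ell_1$. Given that, the argument is short: if $E\subseteq Z$ were isomorphic to $C(2^\nn)$, pick $E'\subseteq E$ isomorphic to $\ell_1$; by Theorem \ref{5t6}, $E'$ is not $X$-singular, so for some $\sigma\in[T]$ the projection $P_\sigma$ is an isomorphism on a further subspace $E''\subseteq E'$ still isomorphic to $\ell_1$; thus $P_\sigma|_E$ fixes a copy of $\ell_1$, hence by Theorem \ref{2t1} fixes a copy of $C(2^\nn)$, making $\xxx_\sigma\cong X_\sigma$ universal --- contradicting that the corresponding $Y\in A$ is non-universal. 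Nothing in your proposal supplies, or substitutes for, the $\ell_1$--in--$X$-singular-subspaces fact, so your contradiction never materializes: ``a rich family of basis intervals'' and ``Ramsey-type stabilization'' gesture at a program but do not yield a projection that fixes $\ell_1$ along a single branch.

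A smaller but real gap is in the coding step. Rather than Jankov--von Neumann uniformization of the ``is a Schauder basis of $Y$'' relation (which would leave you with analytic, not Borel, selectors and considerable bookkeeping), the paper uses Pe{\l}czy\'{n}ski's universal basis space $U$: every semi-normalized basic sequence is equivalent to a subsequence $(u_{l_n})$, the set $\tilde{A}=\{L\in[\nn]: U_L\cong Y \text{ for some }Y\in A\}$ is analytic, and one takes a pruned tree $T$ on $\nn\times\nn$ with $\tilde{A}=\mathrm{proj}[T]$ (Solovay's unfolding). Setting $x_{(s,w)}=u_{\max s}$ gives the Schauder tree basis directly, with the normalization and bi-monotonicity of Definition \ref{5d2} inherited from $(u_n)$ after renorming. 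You should adopt this coding and, more importantly, replace the vague Ramsey heuristic with Theorem \ref{5t6} before the Rosenthal fixing step; as written the proof is incomplete at precisely the point that carries the weight.
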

Our aim in this section is to sketch a proof of Theorem \ref{5t1}
which although is simpler than the one given in \cite{AD},
still it highlights some of the basic ideas developed in that work.

But before that, we need to introduce some pieces of notation.
For technical reasons (that will become transparent below),
we need to work with trees consisting of non-empty finite
sequences. In particular, if $\Lambda$ is a countable set
and $T$ is a tree on $\Lambda$, then we denote by $\ntre$
the set $T\setminus\{\varnothing\}$. Namely, $\ntre$
consists of all non-empty finite sequences of $T$. We call
the set $\ntre$, for obvious reasons, as
\textit{the tree of non-empty sequences of} $T$.

A starting difficulty in the proof of Theorem \ref{5t1}
is how one builds a space out of a class of spaces.
The technical (and conceptual) device is provided in the
following definition.
\begin{defn}[\cite{AD}, Definition 13]
\label{5d2} Let $X$ be a Banach space, $\Lambda$ a countable set and
$T$ a pruned tree on $\Lambda$. Let also $(x_t)_{t\in \dntre}$ be a
normalized sequence in $X$ indexed by the tree $\dntre$ of non-empty
sequences of $T$. We say that $\dstblng$ is a \emph{Schauder tree basis}
if the following are satisfied.
\begin{enumerate}
\item[(1)] $X=\ospan\{x_t:t\in \dntre\}$.
\item[(2)] For every $\sg\in [T]$ the sequence $(x_{\sg|n})_{n\geq 1}$
is a (normalized) bi-monotone Schauder basic sequence.
\end{enumerate}
\end{defn}
For every Schauder tree basis $\stblng$ and every
$\sg\in [T]$ we let $X_\sg=\ospan\{x_{\sg|n}:n\geq 1\}$.
Notice that in Definition \ref{5d2} we do not assume
that the subspace $X_\sg$ of $X$ is complemented.
Notice also that if $\sg,\tau\in [T]$ with
$\sg\neq\tau$, then this does not necessarily
imply that $X_\sg\neq X_\tau$. The following lemma
reveals the critical role of Schauder tree bases
in the construction of universal spaces. It is based
on a technique in Descriptive Set Theory, introduced
by R. M. Solovay, known as ``unfolding".
\begin{lem}
\label{5l3} Let $A$ be an analytic subset of $\sbs$ such that
every $Y\in A$ has a Schauder basis. Then there exist a separable
Banach space $X$, a pruned tree $T$ on $\nn\times\nn$ and a
normalized sequence $(x_t)_{t\in\dntre}$ in $X$ such that
the following are satisfied.
\begin{enumerate}
\item[(i)] The family $\stb=(X,\nn\times\nn,T,(x_t)_{t\in\dntre})$
is a Schauder tree basis.
\item[(ii)] For every $Y\in A$ there exists $\sg\in [T]$ with $Y\cong X_\sg$.
\item[(iii)] For every $\sg\in [T]$ there exists $Y\in A$ with $X_\sg\cong Y$.
\end{enumerate}
\end{lem}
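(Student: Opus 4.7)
The strategy is a classical descriptive-set-theoretic ``unfolding'' of the analytic set $A$ into a tree structure on $\nn\times\nn$, where one coordinate tracks an analytic witness for membership in $A$ and the other encodes a Schauder basis via a countable dense set.

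\emph{Step 1 (reduction to bases via renorming).} By the standard renorming $|||x|||=\sup_{m\leq n}\|\sum_{i=m}^n a_i e_i\|$ (relative to a given basis $(e_n)$ of $Y$), every $Y\in A$ is isomorphic to a space $Y'\in\sbs$ equipped with a normalized bi-monotone Schauder basis. The set
\[\mathcal{B}=\big\{(Y',(y_n))\in\sbs\times C(2^\nn)^\nn:(y_n)\text{ is a normalized bi-monotone basis of }Y'\text{ and }Y'\cong Y\text{ for some }Y\in A\big\}\]
is therefore analytic, its first projection contains an isomorphic copy of every member of $A$, and conversely every member of this projection is isomorphic to a member of $A$.

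\emph{Step 2 (tree via Suslin representation).} Being analytic in a Polish space, $\mathcal{B}$ admits a Suslin representation: there is a pruned tree $T$ on $\nn\times\nn$ and a continuous surjection from $[T]\subseteq(\nn\times\nn)^\nn$ onto $\mathcal{B}$. Concretely, fix a countable dense subset $\{v_m\}_{m\in\nn}\subset C(2^\nn)$ and declare $((a_1,b_1),\ldots,(a_k,b_k))\in T$ precisely when the sequence $(v_{b_1},\ldots,v_{b_k})$ is normalized, satisfies the finite bi-monotonicity inequalities (both closed conditions in the coefficients), and there exists an extension $\eta\in\nn^\nn$ of $(a_1,\ldots,a_k)$ together with an infinite extension of $(v_{b_1},\ldots,v_{b_k})$ witnessing, via the analytic representation, that this extended sequence is a basis of some space $Y'\cong Y$ with $Y\in A$. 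Set $X=C(2^\nn)$ and $x_t=v_{b_k}$ for $t$ ending in $(a_k,b_k)$.

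\emph{Step 3 (verification).} By construction, $T$ is pruned and for each branch $\sg=((a_i,b_i))_{i\geq 1}\in[T]$ the sequence $(x_{\sg|n})=(v_{b_n})$ is normalized and exactly bi-monotone, so $\stb=(X,\nn\times\nn,T,(x_t)_{t\in\ntre})$ is a Schauder tree basis; moreover $X_\sg=\ospan\{v_{b_n}\}$ is isomorphic to some $Y\in A$ by the branching condition, yielding (i) and (iii). For (ii), given $Y\in A$ one picks an isomorphic copy $Y'$ with normalized bi-monotone basis $(y_n)$, and extracts a branch of $T$ realizing $(y_n)=(v_{b_n})$. The main technical obstacle is enforcing \emph{exact} (not merely approximate) normalization and bi-monotonicity of the coded sequence $(v_{b_n})$, since these are closed rather than open conditions and are not automatically preserved when we approximate $(y_n)$ by elements of the fixed dense family $\{v_m\}$. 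One overcomes this by enlarging $\{v_m\}$ to contain the range of a Borel selector that produces bi-monotone basis vectors uniformly on $\mathcal{B}$ (applying the Kuratowski--Ryll-Nardzewski theorem to the analytic relation pairing a space with its bi-monotone bases), or alternatively by defining the ambient norm on $X$ abstractly through the tree structure, in the style of the tree-indexed construction of \cite{AD}.
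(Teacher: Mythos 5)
Your approach runs into a genuine gap that you yourself flag at the end, but neither of the two suggested repairs actually closes it. The core problem is that you try to realize the basis vectors $x_t$ as members of a \emph{fixed} countable dense family $\{v_m\}\subset C(2^\nn)$, and then demand that along every branch of $T$ the sequence $(v_{b_n})$ be \emph{exactly} normalized and bi-monotone. For condition (ii) you then need, for every $Y\in A$, a branch whose coded vectors $(v_{b_n})$ form a normalized bi-monotone basis of an isomorphic copy of $Y$; but a generic normalized bi-monotone basis $(y_n)$ of a space $Y'\cong Y$ has its vectors lying outside the countable set $\{v_m\}$, and since normalization and bi-monotonicity are closed (not open) conditions, nearby elements of $\{v_m\}$ need not satisfy them. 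Your first repair --- enlarging $\{v_m\}$ to contain the range of a Kuratowski--Ryll-Nardzewski selector on $\mathcal{B}$ --- does not work: such a selector assigns to each space in an uncountable set its own countable sequence of vectors, so the union of those ranges is uncountable and cannot be absorbed into a countable dense family. Your second repair (``define the ambient norm abstractly through the tree'') is gesturing in the right direction but is left unexplained, and as stated your Step~2 and Step~3 still rely on vectors chosen from the fixed dense set.

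The paper avoids the obstacle entirely by invoking Pe{\l}czy\'{n}ski's universal basis space $U$ with its fixed normalized bi-monotone basis $(u_n)$: every semi-normalized basic sequence is equivalent to a subsequence $(u_{l_n})$, so every $Y\in A$ is isomorphic to some $U_L=\ospan\{u_{l_n}\}$ with $L\in[\nn]$, and the (analytic) set $\tilde A=\Phi^{-1}(A_\cong)\subseteq[\nn]$ is then projected to a pruned tree $T$ on $\nn\times\nn$ whose nodes are pairs $(s,w)$ with $s$ strictly increasing. Setting $x_t=u_{\max s}$ makes every branch select a \emph{subsequence} of the fixed bi-monotone normalized basis, so exact normalization and bi-monotonicity along branches come for free --- precisely the property your construction lacks. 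If you want to keep your overall Suslin-representation architecture, the fix is to replace the fixed dense set $\{v_m\}\subset C(2^\nn)$ by the fixed basis $(u_n)$ of $U$ and only allow the second coordinate along a branch to pick a strictly increasing sequence of indices; this is essentially the paper's argument.
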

\begin{proof}
Let $U$ be the universal space of A. Pe{\l}czy\'{n}ski
for Schauder basic sequences (see \cite{P}). The space $U$
has a Schauder basis $(u_n)$ satisfying, among others properties,
the following one. For every semi-normalized Schauder
basic sequence $(x_n)$ in a Banach space $X$, there exists
$L=\{l_0<l_1<...\}\in [\nn]$ such that $(x_n)$ is equivalent
to $(u_{l_n})$. By passing to an equivalent norm and normalizing
if necessary, we may additionally assume the basis $(u_n)$
of $U$ is normalized and bi-monotone. Notice that these properties
are inherited by the subsequences of $(u_n)$. For every
$L=\{l_0<l_1<...\}\in[\nn]$ we let $U_L=\ospan\{u_{l_n}:n\in\nn\}$.
By identifying the space $U$ with one of its isometric copies in $C(2^\nn)$,
we see that the map $\Phi:[\nn]\to \sbs$, defined
by $\Phi(L)=U_L$, is Borel.

Now let $A$ be as in the statement of the lemma and put
\[ A_{\cong} =\{ Z\in\sbs: \exists Y\in A \text{ such that } Z\cong Y\}. \]
That is, $A_{\cong}$ is the isomorphic saturation of $A$. It is easy
to see that the equivalence relation $\cong$ of isomorphism is
analytic in $\sbs\times\sbs$ (see \cite{Bos2} for more details).
It follows that the set $A_{\cong}$ is analytic. Hence, the set
\[ \tilde{A}=\{ L\in [\nn]: \exists Y\in A \text{ with } U_L\cong Y\} =
\Phi^{-1}(A_{\cong}) \]
is also analytic. The definition of the set $\tilde{A}$, the universality
of the basis $(u_n)$ of the space $U$ and our starting assumptions on the
set $A$, imply the following.
\begin{enumerate}
\item[(P1)] For every $L\in \tilde{A}$ there exists $Y\in A$ with $U_L\cong Y$.
\item[(P2)] For every $Y\in A$ there exists $L\in\tilde{A}$ with $Y\cong U_L$.
\end{enumerate}
The space $[\nn]$ is naturally identified as a closed subspace of the Baire
space $\nn^\nn$. Hence, the set $\tilde{A}$ can be seen as an analytic
subset of $\nn^\nn$. By \cite[Proposition 25.2]{Kechris}, there
exists a pruned tree $T$ on $\nn\times \nn$ such that $\tilde{A}
=\mathrm{proj}[T]$. Notice that every non-empty node $t$ of $T$ is
just a pair $(s,w)$ of finite sequences in $\nn$ with
$|s|=|t|$ and where $s$ is a \textit{strictly increasing} finite
sequence. Let $\ntre$ be the tree of non-empty sequences of $T$.
For every $t=(s,w)\in\ntre$ we set $n_t=\max s$
and we define $x_t=u_{n_t}$. We also set $X=\ospan\{ x_t:t\in\ntre\}$.
Using properties (P1) and (P2) above, it is easy to see that the
tree $T$ and the family $(x_t)_{t\in\ntre}$ are as desired.
The lemma is proved.
\end{proof}
The second step towards the proof of Theorem \ref{5t1} is
based on a method of constructing Banach spaces introduced by
R. C. James \cite{J} and further developed by several
authors (see, for instance, \cite{Bou1} and \cite{Bos2}).
To describe it, we need to recall some terminology.
Let $T$ be a tree on a set $\Lambda$ and consider
the tree $\ntre$ of non-empty sequences of $T$.
A subset $\mathfrak{s}$ of $\ntre$ is said to be a
\textit{finite segment} if there exist $s,t\in\ntre$
with $s\sqsubseteq t$ and such that
$\seg=\{w\in \ntre:s\sqsubseteq w\sqsubseteq t\}$.
If $\seg=\{w\in \ntre:s\sqsubseteq w\sqsubseteq t\}$
is a finite segment, then we set $\min\seg=s$. Two
finite segments $\seg_1$ and $\seg_2$ of $\ntre$
are said to be \textit{incomparable} if the nodes
$\min\seg_1$ and $\min\seg_2$ are incomparable
with respect to the partial order $\sqsubseteq$
of extension.
\begin{defn}[\cite{AD}, \S 4.1]
\label{5d4} Let $\dstblng$ be a Schauder tree basis.
The $\ell_2$ \emph{Baire sum} of $\mathfrak{X}$,
denoted by $\txtwo$, is defined to be the completion
of $c_{00}(\dntre)$ equipped with the norm
\begin{equation}
\label{5e1} \|z\|_{\txtwo}= \sup\Big\{ \Big( \sum_{i=0}^l
\big\| \sum_{t\in \seg_i} z(t) x_t\big\|^2_X \Big)^{1/2} \Big\}
\end{equation}
where the above supremum is taken over all finite
families $(\seg_i)_{i=0}^l$ of pairwise incomparable,
finite segments of $\dntre$.
\end{defn}
Let $\stblng$ be a Schauder tree basis and consider the
corresponding $\ell_2$ Baire sum $\txtwo$. Let $(e_t)_{t\in\ntre}$
be the standard Hamel basis of $c_{00}(\ntre)$.
We fix a bijection $h:\ntre\to\nn$ such that
for every pair $t,s\in\ntre$ we have that
$h(t)<h(s)$ if $t\sqsubset s$. If $(e_{t_n})$
is the enumeration of $(e_t)_{t\in\ntre}$
according to $h$, then it is easy to verify that
the sequence $(e_{t_n})$ defines a normalized bi-monotone
Schauder basis of $\txtwo$. For every $\sg\in [T]$
let also $\xxx_\sg=\ospan\{ e_{\sg|n}:n\geq 1\}$.
It is also easily seen that the space $\xxx_\sg$ is
isometric to $X_\sg$ and, moreover, it is
$1$-complemented in $\txtwo$ via the natural
projection $P_\sg:\txtwo\to \xxx_\sg$.

Now let $Y$ be a subspace of $\txtwo$. Assume that
there exist $\sg\in [T]$ and a further subspace $Y'$
of $Y$ such that the operator $P_\sg:Y'\to\xxx_\sg$
is an isomorphic embedding. In such a case, the subspace
$Y$ contains information about the Schauder tree basis
$\stblng$. On the other hand, there are subspaces of
$\txtwo$ which are ``orthogonal" to every $\xxx_\sg$.
We give them a special name, as follows.
\begin{defn}[\cite{AD}, Definition 14]
\label{5d5} Let $\dstblng$ be a Schauder tree basis
and let $Y$ be a subspace of $\txtwo$. We say that
$Y$ is \emph{$X$-singular} if for every $\sg\in [T]$ the
operator $P_\sg:Y\to\xxx_\sg$ is strictly
singular.
\end{defn}
In \cite{AD} the class of $X$-singular subspaces
of $\txtwo$ was extensively analyzed. What we need,
in order to finish the proof of Theorem \ref{5t1},
is the following structural result (see \cite[Theorem 24]{AD}).
\begin{thm}
\label{5t6} Let $\dstblng$ be a Schauder tree basis
and let $Y$ be an $X$-singular subspace of $\txtwo$.
Then $Y$ does not contain an isomorphic copy of $\ell_1$.
\end{thm}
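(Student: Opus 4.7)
The plan is to argue by contradiction: assume $Y$ is $X$-singular and contains a normalized sequence $(y_n)$ which is $C$-equivalent to the unit vector basis of $\ell_1$, and extract a subsequence on which the $\ell_2$ Baire sum norm admits an $\ell_2$-type upper bound, contradicting the $\ell_1$-lower bound. The bi-monotone Schauder basis $(e_{t_n})$ of $\txtwo$ mentioned after Definition \ref{5d4} makes this strategy feasible, since all key estimates can be read off the defining formula (\ref{5e1}).

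The first step is a standard block-sequence reduction: by a small-perturbation argument (using that $\ell_1$ does not embed into a finite-dimensional space), we may replace $(y_n)$ by a sequence still in $Y$, still $C'$-equivalent to the $\ell_1$-basis, whose members have finite pairwise disjoint supports $F_n\subset\ntre$ ordered consecutively in the enumeration. The second and decisive step uses $X$-singularity. For each $\sg\in [T]$, $P_\sg|_Y$ is strictly singular, so a Bessaga--Pe\l czy\'nski-type extraction yields, on suitable infinite-dimensional subspaces of $Y$, arbitrarily small $P_\sg$-norm. Since $[T]$ may be uncountable, I would couple this with a Ramsey/tree-combinatorial stabilization of the supports $F_n$: passing to a subsequence one arranges that the ``tree-shape'' of $F_n$ stabilizes, reducing the branches that carry nontrivial asymptotic mass to a \emph{countable} set. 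A diagonal extraction against this countable set, using strict singularity, produces a subsequence (still denoted $(y_n)$) and a constant $M$ such that for every finite segment $\seg$ of $\ntre$ and all scalars $(a_n)$,
\[ \Big\| \sum_n a_n\, y_n|_{\seg}\Big\|_X \leq M\Big(\sum_n |a_n|^2\Big)^{1/2}. \]

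Finally, for any pairwise incomparable finite family $(\seg_i)_{i=0}^l$ of segments, the disjointness of the $F_n$ combined with the above uniform estimate yields
\[ \Big(\sum_{i=0}^l \Big\|\sum_n a_n\, y_n|_{\seg_i}\Big\|_X^2\Big)^{1/2} \leq M'\,\Big(\sum_n |a_n|^2\Big)^{1/2}, \]
and taking the supremum over such families shows $\|\sum_n a_n y_n\|_{\txtwo}\leq M'\|(a_n)\|_{\ell_2}$, contradicting the $\ell_1$-lower bound $\|\sum_n a_n y_n\|\geq C^{-1}\sum_n|a_n|$. The main obstacle is the second step: $X$-singularity is a \emph{branch-by-branch} phenomenon, while the segment estimate needed is uniform across all segments. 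Bridging this gap requires the stabilization of the tree-shape of $(F_n)$ and the isolation of the countably many ``critical'' branches where strict singularity must genuinely be invoked, which is the point where the combinatorics of the tree $T$ come essentially into play.
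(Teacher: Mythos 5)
The paper does not actually prove Theorem~\ref{5t6}: it is quoted verbatim from \cite[Theorem~24]{AD} and no argument is given here, so there is no in-paper proof to compare against. Judged on its own terms, your proposal has two genuine gaps, of which the second is fatal.

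First, the ``tree-shape stabilization'' step is only gestured at. You assert that after a Ramsey-type extraction one can reduce to a \emph{countable} set of branches against which strict singularity need be invoked, and that a diagonal extraction then yields a subsequence with $\| \sum_n a_n y_n|_{\seg}\|_X \leq M \|(a_n)\|_{\ell_2}$ \emph{uniformly over all finite segments} $\seg$. Nothing in the sketch explains why the uncountably many branches passing through $\bigcup_n F_n$ can be reduced to countably many relevant ones, nor why the residual branches contribute a uniformly small amount; the supports $F_n$, while finite, may fan out over more and more of the tree, and each $F_n$ alone already meets uncountably many branches. This is exactly the point where the combinatorics must actually be done, and the sketch replaces it with a name.

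Second, and more seriously, even granting the per-segment estimate, the passage to the $T^{\mathfrak{X}}_2$-norm bound is incorrect. If you only know $\| \sum_n a_n y_n|_{\seg_i}\|_X \leq M \|(a_n)\|_{\ell_2}$ for each $i$, then summing the squares over a family $(\seg_i)_{i=0}^{l}$ of pairwise incomparable segments gives $\bigl(\sum_{i=0}^{l}\|\cdot\|^2\bigr)^{1/2} \leq M\sqrt{l+1}\,\|(a_n)\|_{\ell_2}$, and the factor $\sqrt{l+1}$ is unbounded, since the definition (\ref{5e1}) takes a supremum over all such finite families. The disjointness of the supports $F_n$ does not repair this: what one would need is a \emph{local} estimate of the form $\| \sum_n a_n y_n|_{\seg_i}\|_X \leq M\bigl(\sum_{n:\,F_n\cap\seg_i\neq\varnothing}|a_n|^2\bigr)^{1/2}$ \emph{together with} a uniform bound on how many of the chosen incomparable segments a single $F_n$ can meet. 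Neither is established, and the latter is simply false in general: a set $F_n$ of large cardinality can meet up to $|F_n|$ pairwise incomparable singleton segments. So the final inequality as written does not follow, and the claimed $\ell_2$-type upper bound for $\|\sum_n a_n y_n\|_{T^{\mathfrak{X}}_2}$ is not obtained. The argument in \cite{AD} proceeds along a different line (isolating, via convex combinations and Rosenthal's $\ell_1$-theorem, a branch on which $P_\sg$ is bounded below, contradicting $X$-singularity, rather than trying to establish a global $\ell_2$-domination), which avoids precisely this obstruction.
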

As a mater of fact, stronger properties than the one
asserted by Theorem \ref{5t6} are known (but any
extra information is of no use in the argument below).
We are ready to proceed to the proof of Theorem \ref{5t1}.
\begin{proof}[Proof of Theorem \ref{5t1}]
Let $A$ be as in the statement of the theorem. By Lemma
\ref{5l3}, there exists a Schauder tree basis $\stblng$
such that the following are satisfied.
\begin{enumerate}
\item[(1)] For every $Y\in A$ there exists $\sg\in [T]$ with
$Y\cong X_\sg$.
\item[(2)] For every $\sg\in [T]$ there exists $Y\in A$
with $X_\sg\cong Y$. In particular, for every $\sg\in [T]$
the space $X_\sg$ is non-universal.
\end{enumerate}
Consider the $\ell_2$ Baire sum $\txtwo$ of this Schauder tree
basis $\stb$. We claim that the space $\txtwo$ is the desired one.
Indeed, notice that $\txtwo$ has a Schauder basis and, by property
(1) above, it contains every $Y\in A$ as a complemented
subspace. What remains is to check that $\txtwo$ is non-universal.

We argue by contradiction. So, assume that there exists a
subspace $E$ of $\txtwo$ which is isomorphic to $C(2^\nn)$.
Let $E'$ be a subspace of $E$ which is isomorphic to $\ell_1$.
By Theorem \ref{5t6}, we see that $E'$ is not $X$-singular.
It follows that there exist $\sg\in [T]$ and a further
subspace $E''$ of $E'$ such that $P_\sg:E''\to \xxx_\sg$
is an isomorphic embedding. Clearly we may additionally assume
that $E''$ is isomorphic to $\ell_1$. Now consider the
operator $P_\sg:E\to\xxx_\sg$. What we have just proved is that
the operator $P_\sg:E\to \xxx_\sg$ fixes a copy of $\ell_1$.
By Theorem \ref{2t1}, we get that $P_\sg|_E$ must
also fix a copy of $C(2^\nn)$. This implies that the
space $X_\sg$ is universal, which is a contradiction by
property (2) above. Having arrived to the desired
contradiction the proof of Theorem \ref{5t1} is
completed.
\end{proof}


\section{The main results}

This section is devoted to the proofs of Theorem \ref{1t1}, Theorem \ref{1t3},
Corollary \ref{ic4} and Corollary \ref{1c5} stated in the
introduction. We start with the proof of Theorem \ref{1t3}.
\begin{proof}[Proof of Theorem \ref{1t3}]
Let $A$ be an arbitrary analytic subset of $\nun$.
We apply Corollary \ref{4c13} to the set $A$ and
we get an analytic subset $A'$ of $\nun$ such that
\begin{enumerate}
\item[(1)] every $Y\in A'$ has a Schauder basis and
\item[(2)] for every $X\in A$ there exists $Y\in A'$
containing an isometric copy of $X$.
\end{enumerate}
By (1) above, we may apply Theorem \ref{5t1} to the
set $A'$ and we get a non-universal space $Z$, with
a Schauder basis, containing an isomorphic copy
of every $Y\in A'$. Invoking (2), we see that
the space $Z$ is universal and for the class $A$.
The proof is completed.
\end{proof}
\begin{proof}[Proof of Theorem \ref{1t1}]
As we have already indicated in the introduction,
part (i) implies both (ii) and (iii). To see
(ii)$\Rightarrow$(i), let $\ccc$ be a subset of
$\sbs$ and let $\xi<\omega_1$ be such that
\[ \sup\{ \phi_{\nun}(X):X\in\ccc\}\leq \xi.\]
By Theorem \ref{2t5}(i) and Lemma \ref{2l3}(i), the class
$A_\xi=\{X\in\nun:\phi_{\nun}(X)\leq\xi\}$ is
Borel and clearly $\ccc\subseteq A_\xi$. By
Theorem \ref{1t3}, there exists a non-universal
space $Y$ which is universal for the class $A_\xi$.
A fortiori, the space $Y$ is universal for the class
$\ccc$. That is, part (i) is satisfied.
For the implication (iii)$\Rightarrow$(i) we argue
similarly.
\end{proof}
\begin{proof}[Proof of Corollary \ref{ic4}]
Let $\ccc$ be an isomorphic invariant class of separable Banach spaces.
As we have already mentioned, if the class $\ccc$ is Bossard generic,
then $\ccc$ is also Bourgain generic. To see the converse, assume that
$\ccc$ is not Bossard generic. Hence, we may find an analytic subset
$A$ of $\nun$ such that for every $X\in \ccc$ there exists $Z\in A$
with $X\cong Z$. We apply Theorem \ref{1t3} and we get a non-universal
separable Banach space $Y$ containing an isomorphic copy of every $Z\in A$.
Clearly, the space $Y$ witness the fact that the class $\ccc$ is not Bourgain
generic. Therefore, the two notions coincide, as desired.
\end{proof}
\begin{proof}[Proof of Corollary \ref{1c5}]
Fix $\lambda>1$. The family $\{Y^\lambda_\xi:\xi<\omega_1\}$
will be constructed by transfinite recursion on countable
ordinals. As the first step is identical to the general one,
we may assume that for some countable ordinal $\xi$ and
every $\zeta<\xi$ the space $Y^\lambda_\zeta$ has been
constructed. We set
\[ C=\{X\in\nun: \phi_{\nun}(X)\leq \xi\}\cup
\{Y^\lambda_\zeta:\zeta<\xi\}.\]
By Theorem \ref{2t5}(i) and Lemma \ref{2l3}(i), the set
$C$ is an analytic subset of $\nun$. We apply Theorem
\ref{1t3} and we get a separable non-universal space
$X$ which is universal for the class $C$. We define
$Y^\lambda_\xi$ to be the space $\llll_\lambda[X]$.
This completes the recursive construction. Using
Theorem \ref{4t1} and Proposition \ref{3p1}, it is easily
verified that the family $\{Y^\lambda_\xi:\xi<\omega_1\}$
is as desired.
\end{proof}


\section{Further strongly bounded classes}

This section is devoted to the proof of Theorem \ref{1t6}
stated in the introduction. To this end, we need the
following analogue of Theorem \ref{5t1} for the
class $\ncon_X$.
\begin{thm}[\cite{AD}, Theorem 87]
\label{7t1} Let $X$ be a minimal Banach space. Let also $A$
be an analytic subset of $\ncon_X$ such that every $Y\in A$
has a Schauder basis. Then there exists a space $V\in\ncon_X$,
with a Schauder basis, that contains every $Y\in A$ as a
complemented subspace.
\end{thm}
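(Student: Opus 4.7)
The plan is to mirror the proof of Theorem \ref{5t1}, replacing the class $\nun$ with $\ncon_X$ throughout and using the minimality of $X$ at the critical juncture where Rosenthal's Theorem \ref{2t1} was invoked in the proof of Theorem \ref{5t1}.

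First I would observe that Lemma \ref{5l3} goes through essentially verbatim with $\nun$ replaced by $\ncon_X$: the analyticity of the isomorphism relation $\cong$ on $\sbs\times\sbs$ combined with the analyticity of $\ncon_X$ (Theorem \ref{2t5}(ii)) yields that the isomorphic saturation of $A$ is analytic, and the same Pe\l czy\'nski universal basis argument produces a pruned tree $T$ on $\nn \times \nn$ and a Schauder tree basis $\stblng$ such that (i) for every $Y \in A$ there exists $\sg \in [T]$ with $Y \cong X_\sg$, and (ii) for every $\sg \in [T]$ there exists $Y \in A$ with $X_\sg \cong Y$; in particular, each $X_\sg$ lies in $\ncon_X$. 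Next I would form the $\ell_2$ Baire sum $V = \txtwo$. Exactly as in the proof of Theorem \ref{5t1}, this space carries a Schauder basis, and each $Y \in A$ sits in $V$ as a complemented copy of $\xxx_\sg \cong X_\sg$ via the canonical projection $P_\sg$.

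The heart of the argument is to verify that $V \in \ncon_X$, and I would argue by contradiction: suppose $E$ is a subspace of $V$ isomorphic to $X$. The natural dichotomy splits according to whether $E$ is $X$-singular. If it is not, then there exist $\sg \in [T]$ and a further subspace $E'' \subseteq E$ such that $P_\sg|_{E''}$ is an isomorphism onto its image. Since $X$ is minimal and $E'' \subseteq E \cong X$, the space $E''$ itself contains an isomorphic copy of $X$; transporting this copy through $P_\sg$ shows that $\xxx_\sg$ (and hence $X_\sg$) contains an isomorphic copy of $X$, contradicting $X_\sg \in \ncon_X$.

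The remaining case---where $E$ is $X$-singular---is the main obstacle, and it is where the proof genuinely departs from that of Theorem \ref{5t1}: there, Theorem \ref{5t6} was combined with Rosenthal's fixing theorem to rule out $\ell_1 \subseteq E$ via universality, whereas here $X$ need not contain $\ell_1$ and no Rosenthal-type fixing statement is available. The way I would handle this is to invoke the sharper structural information on $X$-singular subspaces developed in \cite{AD}, which refines Theorem \ref{5t6} by exhibiting, inside any $X$-singular subspace of $\txtwo$, a block-type decomposition compatible with the incomparable-segments structure of $\dntre$; together with the minimality of $X$ and a gliding-hump extraction on a normalized sequence in $E$, this forces a projection $P_\sg|_{E'}$ on some further subspace $E'\subseteq E$ to be bounded below, which contradicts the assumed $X$-singularity of $E$. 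I expect this last step---replacing the Rosenthal-style $\ell_1$-to-$C(2^\nn)$ lift with a minimality-driven block extraction---to be the technically delicate point of the proof; once it is in place, combining the two cases yields $V \in \ncon_X$ as required.
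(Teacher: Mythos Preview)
The paper does not prove Theorem \ref{7t1}; it is quoted from \cite{AD} (Theorem 87 there) and used as a black box in the proof of Theorem \ref{1t6}. So there is no proof in this paper to compare your attempt against.

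Evaluating your proposal on its own merits: the overall strategy---run Lemma \ref{5l3} to produce a Schauder tree basis, form $V=\txtwo$, and replace the appeal to Rosenthal's Theorem \ref{2t1} by minimality of $X$---is the natural adaptation of the proof of Theorem \ref{5t1} and is presumably the skeleton of the argument in \cite{AD}. Your Case 1 (where the putative copy $E\cong X$ is not $X$-singular) is correct: minimality lets you pass to a copy of $X$ inside the subspace $E''$ on which $P_\sg$ is an isomorphic embedding, and push it into $X_\sg$.

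The gap is Case 2. Note that in the proof of Theorem \ref{5t1} there is no Case 2 at all: one first passes to $E'\cong\ell_1$ inside $E$, and Theorem \ref{5t6} rules out $E'$ being $X$-singular, so one is always in the ``not $X$-singular'' branch. Here you cannot make that reduction unless $X$ contains $\ell_1$, so you genuinely must confront the possibility that $E\cong X$ is itself $X$-singular. Your proposed resolution is not an argument: you invoke unspecified ``sharper structural information'' from \cite{AD} and a ``gliding-hump extraction'' that allegedly forces some $P_\sg|_{E'}$ to be bounded below---but that conclusion is exactly the negation of the case hypothesis, and you give no mechanism by which minimality of $X$ or the block structure of $X$-singular subspaces would produce it. You acknowledge this yourself as the ``technically delicate point''; in fact it is where the entire content of the \cite{AD} proof lives, and it requires the detailed analysis of $X$-singular subspaces of $\txtwo$ carried out in that paper, well beyond the single consequence recorded here as Theorem \ref{5t6}.
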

We are ready to proceed to the proof of Theorem \ref{1t6}.
\begin{proof}[Proof of Theorem \ref{1t6}]
Let $X$ be a minimal Banach space not containing
an isomorphic copy of $\ell_1$ and let $A$ be an
arbitrary analytic subset of $\ncon_X$.
We apply Corollary \ref{4c15} to the set $A$ and
we get an analytic subset $A'$ of $\ncon_X$ such that
\begin{enumerate}
\item[(1)] every $Y\in A'$ has a Schauder basis and
\item[(2)] for every $Z\in A$ there exists $Y\in A'$
containing an isometric copy of $Z$.
\end{enumerate}
Applying Theorem \ref{7t1} to the
set $A'$ and invoking (1) above, we see
that there exists a space $V\in\ncon_X$,
with a Schauder basis, which is universal
for the class $A'$. By (2) above, we get that
the space $V$ is universal for the class $A$
as well. The proof is completed.
\end{proof}
We close this section by giving the following analogue of Corollary \ref{1c5}.
Its proof is identical to that of Corollary \ref{1c5}.
\begin{cor}
\label{7c2} Let $X$ be a minimal Banach space
with a Schauder basis and not containing $\ell_1$.
Then, for every $\lambda>1$ there exists a family
$\{Y^\lambda_\xi: \xi<\omega_1\}$ of separable Banach
spaces with the following properties.
\begin{enumerate}
\item[(i)] For every $\xi<\omega_1$ the space $Y^\lambda_\xi$ is
$\llll_{\infty,\lambda+}$ and does not contain a copy of $X$.
\item[(ii)] If $\xi<\zeta<\omega_1$, then $Y^\lambda_\xi$ is contained
in $Y^\lambda_\zeta$.
\item[(iii)] If $Z$ is a separable space with $\phi_{\ncon_X}(Z)\leq \xi$,
then $Z$ is contained in $Y^\lambda_\xi$.
\end{enumerate}
\end{cor}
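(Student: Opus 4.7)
The plan is to mimic the proof of Corollary \ref{1c5}, but using the strong boundedness of $\ncon_X$ (Theorem \ref{1t6}) in place of that of $\nun$, and Proposition \ref{3p1}(ii) in place of Proposition \ref{3p1}(i). The only extra hypothesis we are really using beyond what is available for the $\nun$ case is that $X$ does not contain $\ell_1$, which is exactly what allows Proposition \ref{3p1}(ii) to be invoked.

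The construction proceeds by transfinite recursion on $\xi<\omega_1$. Suppose that $Y^\lambda_\zeta$ has been defined for all $\zeta<\xi$, each being $\llll_{\infty,\lambda+}$ and lying in $\ncon_X$. Consider the class
\[ C_\xi=\{Z\in\ncon_X:\phi_{\ncon_X}(Z)\leq\xi\}\cup\{Y^\lambda_\zeta:\zeta<\xi\}. \]
By Theorem \ref{2t5}(ii) and Lemma \ref{2l3}(i), the set $\{Z\in\ncon_X:\phi_{\ncon_X}(Z)\leq\xi\}$ is Borel, and since the second piece is countable, $C_\xi$ is an analytic subset of $\ncon_X$. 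Apply Theorem \ref{1t6} to obtain a space $W_\xi\in\ncon_X$ containing an isomorphic copy of every member of $C_\xi$. Then define $Y^\lambda_\xi=\llll_\lambda[W_\xi]$.

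It remains to verify (i)--(iii). By Theorem \ref{4t1}, $Y^\lambda_\xi$ is $\llll_{\infty,\lambda+}$, contains $W_\xi$ isometrically, and the quotient $Y^\lambda_\xi/W_\xi$ has the Schur property. Since $X$ is minimal and does not contain $\ell_1$ and $W_\xi$ does not contain $X$, Proposition \ref{3p1}(ii) gives that $Y^\lambda_\xi$ does not contain $X$ either, establishing (i). For (ii), if $\zeta<\xi$ then $Y^\lambda_\zeta\in C_\xi$, so $Y^\lambda_\zeta$ embeds isomorphically into $W_\xi$, and hence into $Y^\lambda_\xi$. For (iii), if $Z$ is separable with $\phi_{\ncon_X}(Z)\leq\xi$, then $Z\in C_\xi$, so $Z$ embeds into $W_\xi\subseteq Y^\lambda_\xi$.

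The only step that requires any real care is the analyticity of $C_\xi$, and here Theorem \ref{2t5}(ii) (that $\phi_{\ncon_X}$ is a $\PB^1_1$-rank on $\ncon_X$) is essential; this is the hypothesis on $X$ that forces us to assume $X$ has a Schauder basis in the statement, and it is what allows the Bourgain-style sublevel sets to be Borel. Everything else is a direct transcription of the $\nun$-case argument, with Theorem \ref{1t6} supplying the strongly bounded envelope and Proposition \ref{3p1}(ii) ensuring that the Bourgain--Pisier $\llll_\infty$-envelope preserves the property of not containing $X$.
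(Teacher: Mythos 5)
Your proposal is correct and follows exactly the same approach as the paper, which simply states that the proof of Corollary \ref{7c2} is identical to that of Corollary \ref{1c5}: transfinite recursion, using Theorem \ref{1t6} in place of Theorem \ref{1t3}, Theorem \ref{2t5}(ii) in place of Theorem \ref{2t5}(i), and Proposition \ref{3p1}(ii) in place of Proposition \ref{3p1}(i). Your remark that the Schauder basis hypothesis on $X$ is there precisely to invoke Theorem \ref{2t5}(ii) is a correct and useful observation.
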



\end{document}